\numberwithin{equation}{section}
\def\R{{\mathbb R}}
\newcommand{\beq}{\begin{equation}}
\newcommand{\eeq}{\end{equation}}
\newcommand{\ben}{\begin{eqnarray}}
\newcommand{\een}{\end{eqnarray}}
\newcommand{\beno}{\begin{eqnarray*}}
\newcommand{\eeno}{\end{eqnarray*}}
\newcommand{\pa}{\partial}
\newtheorem{theorem}{\textbf Theorem}[section]
\newtheorem{lemma}{\textbf Lemma}[section]
\newtheorem{cor}{\textbf Corollary}[section]
\newtheorem{prop}{\textbf Proposition}[section]
\newtheorem{defin}{\textbf Definition}[section]
\newtheorem{remark}{\it Remark}
\numberwithin{equation}{section}
\numberwithin{equation}{section}\allowdisplaybreaks
\subjclass[2010]{35Q30, 76D03, 76D05}
\keywords{anisotropic Boussinesq equations, Littlewood-Paley theory, striated regularity.}
\begin{document}

\title{On the striated regularity for the 2D anisotropic Boussinesq system}

\author[M. Paicu, N. Zhu]{Marius Paicu$^{1}$ and
	Ning Zhu$^{2}$}

\address{$^1$ Universit\'e de Bordeaux, Institut de Math\'ematiques de Bordeaux, F-33405 Talence Cedex, France}

\email{marius.paicu@math.u-bordeaux.fr}

\address{$^2$ School of Mathematical Sciences, Beijing Normal University, Laboratory of Mathematics and Complex Systems, Ministry of Education, Beijing 100875,  China}

\email{mathzhu1@163.com}
\begin{abstract} In this paper, we investigate the global existence and uniqueness of strong solutions to 2D  Boussinesq system  with anisotropic thermal diffusion or anisotropic viscosity  and with striated initial data.  Using  the key idea of Chemin to solve 2-D vortex patch of ideal fluid, namely the striated regularity can help to bound the gradient of the velocity, we can prove the global well-posedness  of the Boussinesq system with anisotropic thermal diffusion with initial vorticity being discontinuous across some smooth interface. In the case of an anisotropic horizontal viscosity, we can study the propagation of the striated regularity for the smooth temperature patches problem.

\end{abstract}
\maketitle

\begin{section}{introduction}
The Boussinesq system is a classical model in geophysical fluid dynamics which describes the large-scale atmospheric and oceanic flows and also play an important role in the study of Rayleigh-B\'enard convection (see \cite{Pedlosky} for example). In the present paper, we investigate the 2D anisotropic Boussinesq equations with horizontal temperature diffusion or horizontal velocity dissipation. These are derivative models from the classical Boussinesq system where the vertical dimension of the domain is very small compared with the horizontal  dimension of the domain. In this case, after rescaling the domain, the dissipation is not isotropic and we have to deal with the anisotropic problem. More precisely, we study the following system which is the Euler equations coupling with a transport-diffusion temperature equation with diffusion only in horizontal direction,
\begin{equation}
\label{system1}
\left\{
\begin{array}{cc}
\begin{split}
&\partial_t u+u\cdot\nabla u=-\nabla p+\theta e_2,~~~~~~~~~~~x\in{\mathbb{R}^2},~t>0 \\
&\partial_t \theta+u\cdot\nabla\theta-\kappa\partial_1^2\theta=0,\\
&\nabla\cdot u=0,\\
&u(0,x)=u_0(x), \theta(0,x)=\theta_0(x),\\
\end{split}
\end{array}
\right.
\end{equation}
and a system where the Navier-Stokes equations with no vertical viscosity coupling with a transport temperature equation,
\begin{equation}
\label{system2}
\left\{
\begin{array}{cc}
\begin{split}
&\partial_t u+u\cdot\nabla u-\nu\partial^2_1u=-\nabla p+\theta e_2,~~~~~~~~~~~x\in{\mathbb{R}^2},~t>0 \\
&\partial_t \theta+u\cdot\nabla\theta=0,\\
&\nabla\cdot u=0,\\
&u(0,x)=u_0(x), \theta(0,x)=\theta_0(x).\\
\end{split}
\end{array}
\right.
\end{equation}
Here $u=(u^1(x,t),u^2(x,t))$ denotes the velocity field, $p=p(x,t)$ is a scalar function denotes the pressure. $\theta=\theta(x,t)$ is a scalar representing the temperature in the content of thermal convection and the density in the modeling of geophysical fluids. $e_2=(0,1)$ is the vertical unit vector field, and the forcing term $\theta e_2$ on behalf of the buoyancy force due to the gravity field. The parameters $\kappa$ and $\nu$ denote the molecular diffusion and the viscosity respectively. These anisotropic system are important modeling dynamics of geophysical flows (see e.g. \cite{CDGG1,CDGG2,Iftimie,Paicu1}).

The general 2D anisotropic Boussinesq equations can be read as,
\begin{equation}
\label{full-boussinesq}
\left\{
\begin{array}{cc}
\begin{split}
&\partial_t u+u\cdot\nabla u-\nu_1\partial_1^2u-\nu_2\partial_2^2u=-\nabla p+\theta e_2,\ \\
&\partial_t \theta+u\cdot\nabla\theta-\kappa_1\partial_1^2\theta-\kappa_2\partial_2^2\theta=0,\\
&\nabla\cdot u=0,\\
&u(0,x)=u_0(x), \theta(0,x)=\theta_0(x).
\end{split}
\end{array}
\right.
\end{equation}
where $\nu_1$, $\nu_2$, $\kappa_1$ and $\kappa_2$ are real parameters. Systems \eqref{system1} and \eqref{system2} are two special cases for \eqref{full-boussinesq}. 
When $\nu_1=\nu_2>0$, $\kappa_1=\kappa_2>0$, the global well-posedness theory for \eqref{full-boussinesq} has been established in \cite{CD,Guo}. On the contrary, when these four parameters are zero, whether \eqref{full-boussinesq} has an unique global solution is a challenging problem and still unsolved. This system has many similarities with the classical 3D incompressible Euler equations such as the vortex stretching mechanism (which will be explained later). So it has both physical motivation and mathematical significant to investigate the intermediate cases (only partial dissipation) and some improvements has been made in the past few years. 

The global regularity for the case when $\nu_1=\nu_2>0$ and $\kappa_1=\kappa_2=0$ was proven by Chae in \cite{Chae1} and by Hou and Li in \cite{HL} with smooth initial data. Later, Abidi and Hmidi studied this system in the Besov space in \cite{AH}. The global weak solution with finite energy has been construct in \cite{HK1} and has been proved to be unique later in \cite{DP3}. For the case $\nu_1=\nu_2=0$ and $\kappa_1=\kappa_2>0$, 
Chae in \cite{Chae1} also studied the global regularity for smooth data. This result was improved by Hmidi and Keraani in \cite{HK2}, Danchin and the
first author in \cite{DP2} for rough initial data. The global well-posedness for \eqref{system1} and \eqref{system2} was considered by Danchin and the first author in \cite{DP1}, and they established the global existence and uniqueness theory. Then the global well-posedness for the anisotropic Boussinesq equations
with vertical dissipation, namely \eqref{full-boussinesq} with only $\nu_2,\kappa_2>0$, was studied by Cao and Wu in \cite{CW}. Later, Adhikaria et. al. investigated other mixed dissipation cases \cite{ACSWXY}. Other interesting recent results on the 2D anisotropic
Boussinesq equations and other related systems can be found in \cite{LLT,LT1,LPZ2,ACW1,ACW2,JL,XZ}.

Next we would like to introduce a quantity which is widely utilized in the literature we mention above. The quantity $\omega\triangleq\pa_1u_2-\pa_2u_1$ which called vorticity measures how fast the
fluid rotates. Taking $curl$ operator to the first equation of \eqref{system1}
we obtain the corresponding vorticity equation,
\begin{equation}
\label{vorticity2}
\partial_t \omega+u\cdot\nabla \omega=\partial_1\theta.
\end{equation}
Similarly, the vorticity form
of system \eqref{system2},
\begin{equation}
\label{vorticity1}
\partial_t \omega+u\cdot\nabla \omega-\partial_1^2\omega=\partial_1\theta.
\end{equation}
The forcing term $\pa_1\theta$ is the so called "vortex-stretching" term which making this system become more complex than the 2D Euler system.

Another part of our paper is devoted to study the vortex (temperature) patches problem. Before we describe this problem, we need first to introduce some notations. 
Let us denote by $\psi(\cdot,t)$ the flow associated with the vector field $u$, that is 
\begin{equation}
\label{flow-map}
\left\{
\begin{array}{cc}
\begin{split}
&\frac{d}{dt} \psi(x,t)=u(\psi(t,x),t),\\
&\psi(0,x)=x.
\end{split}
\end{array}
\right.
\end{equation}
The classical vortex patch problem is associated to the 2D Euler equations. If the initial vorticity taking the characteristic function supported in some connected bounded domain, whether the regularity of the boundary can be preserved through the evolution of the flow $\psi$? It has been proved by Chemin that the regularity of the boundary can be persisted for all the time in some H\"older class (see \cite{Chemin-perfect,Chemin4} for details). Other results about the vortex (temperature) patch problems corresponding to the Euler equations, homogeneous (inhomogeneous) Navier-Stokes equations and other fluid models can be found in \cite{GS,HH,DZ,GG-J,BC1,Danchin1,Danchin2,Hmidi1,Fanelli1,LZ1,LZ2,DZ2,DM1,DM2,PZ1,DFP} and the references therein.

In order to understand the striated regularity clearly, we need first to introduce some notations and definitions which 
will be used to describe the boundary regularity. Let $X_0$ be a vector field defined on $D_0$ (a connected bounded  domain),
$X$ is the evolution of $X_0$ along the flow $\psi$ defining as follows,
\begin{equation}
\label{X-defin}
X(x,t)\triangleq\pa_{X_0}\psi(\psi^{-1}(x,t),t),
\end{equation} 
where $\pa_{X_0}f\triangleq X_0\cdot\nabla f$ denoting the standard directional derivative.\\
Taking time derivative of \eqref{X-defin}, one can obtain $X$ satisfies the following transport equation,
\begin{equation}
\label{X-eq}
\left\{
\begin{array}{cc}
\begin{split}
&\partial_t X+u\cdot\nabla X=\partial_X u, \\
&X(0,x)=X_0(x).\\
\end{split}
\end{array}
\right.
\end{equation}
It is not hard to check that $\pa_X$ satisfies,
\begin{equation}
\label{properties-partialX}
[\pa_X, D_t]=0,
\end{equation}
where $[A, B]\triangleq AB-BA$ represents the standard commutator, and
$D_t\triangleq \pa_t+u\cdot\nabla$ denotes the material derivative.

We need also the following two definitions, which can be found in \cite{Chemin-perfect,BCD}.

\begin{defin}
	\label{Cs-class}
	Let $s>0$ and $\Omega$ be a bounded domain in $\R^d$. We say that $\Omega$ is of class $C^{s}$
	if there exists a compactly support function $f\in C^s(\R^2)$ and a neighborhood $V$ of $\pa\Omega$ such that 
	\begin{equation*}
	\pa\Omega=f^{-1}(\{0\})\cap V\quad \text{and}\quad \nabla f(x)\neq0~~\forall~x\in V.
	\end{equation*}
\end{defin}

\begin{defin}
\label{definition2.1}
A family $(X_\lambda)_{\lambda\in\Lambda}$ of vector fields over $\R^2$ is said to be non-degenerate whenever
\begin{equation*}
I(X)\triangleq\inf_{x\in\R^d}\sup_{\lambda\in\Lambda}|X_{\lambda}(x)|>0.
\end{equation*}
Let $r\in(0,1)$ and $(X_{\lambda})_{\lambda\in \Lambda}$ be a non-degenerate family of $C^r$ vector fields over $\R^2$. A bounded function $f$ is said to be in the function space $C^r_{X}$ if it satisfies
\begin{equation*}
\|f\|_{C^r_X}\triangleq \sup_{\lambda\in\Lambda}\bigg(\frac{\|f\|_{L^\infty}\|X_\lambda\|_{C^r}+\|\nabla\cdot(X_{\lambda}f)\|_{C^r_X}}{I(X)}\bigg).
\end{equation*}
\end{defin}

Next we present the main results for our paper. Since the concrete values of the constants $\kappa$ in system \eqref{system1} and $\nu$ in \eqref{system2} play no role in our discussion, for this reason, we shall assume $\kappa=\nu=1$ throughout this paper.

The main result pertaining to system \eqref{system1} can be stated as follows.
\begin{theorem}
\label{theorem-model1}
Assume $u_0\in L^2$ be a divergence-free vector field, the corresponding vorticity $\omega_0\triangleq\partial_1u_0^2-\partial_2u_0^1\in L^\infty$.
Let $(\omega_0,\theta_0)\in H^s\times H^{1+s}$ with $0<s<1.$ Then system \eqref{system1} exists a unique global solution $(u,\theta)$ satisfies
\begin{equation*}
u\in L^\infty([0,T];H^{1+s}),~\omega  \in L^\infty([0,T];L^\infty),~ \theta\in
L^\infty([0,T];H^{1+s}),~ \partial_1\theta\in
L^\infty([0,T];H^{1+s}).
\end{equation*}
Furthermore, for any non-degenerate vector field $X_0\in C^s$ such that $\pa_{X_0}\omega_0\in L^p~(2<p<\infty)$, there exists a unique global solution $X\in L^\infty([0,T];C^s)$ to equation \eqref{X-eq} and we have
\begin{equation*}
\partial_X\omega\in L^\infty([0,T];L^p),~\nabla u\in L^1([0,T];L^\infty).
\end{equation*}
\end{theorem}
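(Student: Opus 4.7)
The plan is to split the proof into two stages: (i) global existence and uniqueness of $(u,\theta)$ at the $H^{1+s}\times H^{1+s}$ regularity level with $\pa_1\theta\in H^{1+s}$, and (ii) propagation of the striated regularity by adapting Chemin's vortex-patch strategy to the \emph{forced} vorticity equation \eqref{vorticity2}. For (i), I would take as a starting point the global well-posedness result of Danchin and the first author \cite{DP1}, which already provides a unique solution with $\omega\in L^\infty_T L^\infty$ for rough initial data. To upgrade to $H^{1+s}$, one performs energy estimates on $\theta$ and $\pa_1\theta$, using the horizontal diffusion $-\pa_1^2\theta$ together with anisotropic Littlewood--Paley tools; since $\pa_1\theta\in L^\infty_T H^{1+s}\hookrightarrow L^\infty_{T,x}$ in 2D, equation \eqref{vorticity2} keeps $\omega\in L^\infty_T L^\infty$ so long as $\nabla u\in L^1_T L^\infty$, and then $u\in L^\infty_T H^{1+s}$ follows by Biot--Savart.

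For (ii), I first solve \eqref{X-eq} through the representation \eqref{X-defin}, giving $X\in L^\infty_T C^s$ with norm bounded by $\|X_0\|_{C^s}\exp(C\|\nabla u\|_{L^1_T L^\infty})$ and $I(X(t))\gtrsim I(X_0)\exp(-\|\nabla u\|_{L^1_T L^\infty})$. Next, using the commutation identity \eqref{properties-partialX}, applying $\pa_X$ to \eqref{vorticity2} yields
\begin{equation*}
D_t(\pa_X\omega)=\pa_X\pa_1\theta=\pa_1(\pa_X\theta)-(\pa_1 X)\cdot\nabla\theta,
\end{equation*}
and an $L^p$-energy estimate (using $\nabla\cdot u=0$) gives
\begin{equation*}
\|\pa_X\omega(t)\|_{L^p}\leq \|\pa_{X_0}\omega_0\|_{L^p}+\int_0^t\big(\|\pa_1(\pa_X\theta)\|_{L^p}+\|(\pa_1X)\cdot\nabla\theta\|_{L^p}\big)d\tau.
\end{equation*}
The right-hand side is controlled by the already-established $L^\infty_T H^{1+s}$ bounds on $\theta$ and $\pa_1\theta$ together with $\|X\|_{L^\infty_T C^s}$. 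The heart of the argument is then the Chemin-type logarithmic inequality
\begin{equation*}
\|\nabla u\|_{L^\infty}\lesssim \|u\|_{L^2}+\|\omega\|_{L^\infty}\log\Big(e+\frac{\|\omega\|_{C^s_X}}{\|\omega\|_{L^\infty}}\Big),
\end{equation*}
where the striated norm $\|\omega\|_{C^s_X}$ of Definition \ref{definition2.1} is bounded paradifferentially by $\|\pa_X\omega\|_{L^p}$ and $\|X\|_{C^s}$. Plugging this back and applying a Grönwall argument closes the bootstrap for $\|\nabla u\|_{L^1_T L^\infty}$ and simultaneously yields $\pa_X\omega\in L^\infty_T L^p$.

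\textbf{Main obstacle.} The source term $\pa_X\pa_1\theta$ in the striated vorticity equation is the real difficulty: unlike the classical Euler vortex patch case where $D_t(\pa_X\omega)=0$, here one must tame $\pa_1(\pa_X\theta)$ without appealing to smoothness of $X$ beyond $C^s$, and the commutator $(\pa_1 X)\cdot\nabla\theta$ must be estimated using the $C^s$ regularity of $X$ paired with just enough regularity of $\theta$ -- precisely what the hypothesis $\theta_0\in H^{1+s}$ combined with the horizontal diffusion provides through $\pa_1\theta\in L^\infty_T H^{1+s}$. This is the step where the anisotropy of the problem bites and the use of a finite $p<\infty$ (rather than $p=\infty$), together with a careful paradifferential decomposition respecting the horizontal/vertical asymmetry, is essential. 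Once these commutators are under control, the logarithmic estimate closes the loop and delivers all the stated conclusions of the theorem at once.
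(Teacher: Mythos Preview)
Your overall architecture---bootstrap the $H^s\times H^{1+s}$ bounds from the Danchin--Paicu well-posedness result, then close Chemin's striated-regularity/logarithmic-inequality loop on the forced vorticity equation---matches the paper's proof exactly. The one substantive divergence is in your treatment of the source term $\pa_X\pa_1\theta$ in the equation for $\pa_X\omega$, and there you have made the problem harder than it is.

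You rewrite $\pa_X\pa_1\theta=\pa_1(\pa_X\theta)-(\pa_1 X)\cdot\nabla\theta$ and single out the commutator $(\pa_1 X)\cdot\nabla\theta$ as the ``main obstacle.'' But $X$ is only in $C^s$, so $\pa_1 X\in C^{s-1}$ has negative regularity; pairing it with $\nabla\theta\in H^s$ and asking for the product in $L^p$ forces, at best, a paraproduct argument that needs $2s-1>0$. For $0<s\le\tfrac12$ your two pieces, estimated separately, need not lie in any $L^p$, and the scheme as written risks failing on half of the stated range.

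The paper sidesteps this entirely by \emph{not} commuting: it keeps
\[
\pa_X\pa_1\theta = X\cdot\nabla(\pa_1\theta)
\]
and bounds it directly in $L^p$ by $\|X\|_{L^\infty}\|\nabla\pa_1\theta\|_{L^p}$. The factor $\|X\|_{L^\infty}$ is controlled by $\|X_0\|_{L^\infty}\exp(\int_0^t\|\nabla u\|_{L^\infty})$ via the transport equation \eqref{X-eq}, and $\|\nabla\pa_1\theta\|_{L^p}\le C\|\pa_1\theta\|_{H^{1+s}}$ by the Sobolev embedding $H^s\hookrightarrow L^p$; the time integral of the latter is exactly what the a~priori estimate of stage~(i) (the paper's Proposition~\ref{prop-omega-theta}) supplies. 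No paradifferential decomposition and no horizontal/vertical anisotropy enter at this step. In short, your ``main obstacle'' is self-inflicted and dissolves once you stop splitting the derivative; after that, the Gr\"onwall closure with the logarithmic inequality goes through just as you outline.
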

As a direct application, this theorem can be used to deal with the so called "vortex patch" problem as follows.\\
For
\begin{equation}
\label{vortex-patch-initial}
\omega_0(x)=\chi_{D_0}(x)\triangleq\left\{\begin{split}
1\quad x\in D_0,\\
0\quad x\notin D_0,
\end{split}\right.
\end{equation}
where $D_0$ is a connected bounded domain, $\chi_{D_0}$ is the standard characteristic function of $D_0$. Let $\omega(x,t)=\omega^1(x,t)+\omega^2(x,t)$ where $\omega^1$ is the solution of the system
\begin{equation}
\label{omega1}
\left \{\begin{split} &\pa_t\omega^1+u\cdot \nabla \omega^1=0,\\
&\omega^1(x,0)=\omega_0(x),\end{split}
\right.
\end{equation}
and $\omega^2$ is the solution of the system
\begin{equation}
\label{omega2}
\left \{\begin{split} &\pa_t\omega^2+u\cdot \nabla \omega^2=\pa_1\theta,\\
&\omega^2(x,0)=0.\end{split}
\right.
\end{equation}
Then the main result can be stated as follows.
\begin{cor}
	\label{cor-1}
Assume $\omega_0$ defined as in \eqref{vortex-patch-initial} and $D_0$ be a connected bounded domain with its boundary $\pa D_0$ in H\"older class $C^{1+s}~(0<s<1)$. Then system \eqref{system1} exists a unique global solution satisfies the properties shows in Theorem \ref{theorem-model1}. Moreover, the solution of systems \eqref{omega1} and \eqref{omega2} satisfying
\begin{equation*}
\omega^1=\chi_{D_t},\quad \quad \omega^2\in L^\infty([0,T];C^{s}(X)),
\end{equation*}
with $D_t\triangleq\psi(D_0,t)$ and
the boundary of the domain remains in the class $C^{1+s}$.
\end{cor}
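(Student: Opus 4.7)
The plan is to apply Theorem \ref{theorem-model1} to a suitable non-degenerate family $X_0$ adapted to $\partial D_0$, and then to read off the geometric content of the corollary from the striated bounds it provides.

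First I would construct the striated family. Because $\partial D_0$ is of class $C^{1+s}$, Definition \ref{Cs-class} provides a local defining function $f\in C^{1+s}$ with $\nabla f\neq 0$ on a neighborhood of $\partial D_0$; after covering $\partial D_0$ by finitely many such charts and gluing with a smooth partition of unity, I would set $X_{0,\lambda}=\nabla^\perp f_\lambda$. This yields a non-degenerate family of divergence-free $C^s$ vector fields that are tangent to $\partial D_0$. Since each $X_{0,\lambda}$ is divergence-free and tangential, $\partial_{X_{0,\lambda}}\chi_{D_0}=\nabla\!\cdot\!(X_{0,\lambda}\chi_{D_0})=0$ as a distribution, so $\partial_{X_0}\omega_0\in L^p$ for any $p\in(2,\infty)$ trivially. (A minor technical point is that $\chi_{D_0}$ only lies in $H^\sigma$ for $\sigma<1/2$, so for $s\ge 1/2$ one must first apply Theorem \ref{theorem-model1} to mollified data $\omega_0^\varepsilon=\chi_{D_0}\ast\rho_\varepsilon$, derive uniform striated bounds, and then pass to the limit using the compactness provided by $\nabla u^\varepsilon\in L^1_T L^\infty$.)

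Next I would invoke Theorem \ref{theorem-model1} to obtain the unique global solution $(u,\theta)$ together with the striated controls $X\in L^\infty_T C^s$, $\partial_X\omega\in L^\infty_T L^p$, and the decisive Lipschitz bound $\nabla u\in L^1_T L^\infty$. The latter makes the flow $\psi(t,\cdot)$ a bi-Lipschitz homeomorphism, so by DiPerna--Lions theory the pure transport problem \eqref{omega1} is solved by $\omega^1(t,x)=\chi_{D_0}(\psi^{-1}(t,x))=\chi_{D_t}$ with $D_t=\psi(t,D_0)$, giving the first assertion.

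Finally, to recover the striated regularity of $\omega^2$ and the $C^{1+s}$ regularity of $\partial D_t$, I would use the commutator identity \eqref{properties-partialX}: applied to the transport equation for $\omega^1$ it gives $D_t(\partial_X\omega^1)=0$, so the tangential vanishing $\partial_{X_0}\omega_0=0$ is propagated and $\partial_X\omega^1\equiv 0$. Consequently $\partial_X\omega^2=\partial_X\omega\in L^\infty_T L^p$, which together with $\omega^2\in L^\infty_T L^\infty$ places $\omega^2$ in the striated Hölder class of Definition \ref{definition2.1} via the Littlewood--Paley characterization used in \cite{Chemin-perfect,BCD}. Because $X(t,\cdot)$ is the pushforward of $X_0$ by $\psi$, it is tangent to $\partial D_t$, non-degenerate, and of class $C^s$, so the standard Chemin argument (tangential $C^s$ plus $\nabla u\in L^1_T L^\infty$) upgrades $\partial D_t$ to $C^{1+s}$.

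The main obstacle is the final step: the passage from a purely tangential $C^s$ bound on $X$ and $\omega^2$ to a full $C^{1+s}$ control on the level-set geometry is precisely where Chemin's idea—using striated regularity to gain one extra Hölder derivative on the boundary—has to be deployed, and it must be adapted to accommodate the buoyancy forcing $\partial_1\theta$ in \eqref{omega2}. That the forcing does not destroy the tangential regularity is what the $L^\infty_T H^{1+s}$-bound on $\partial_1\theta$ from Theorem \ref{theorem-model1} is designed to ensure.
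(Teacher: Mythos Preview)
Your approach is essentially the same as the paper's: take $X_0=\nabla^\perp f_0$ for a level-set function $f_0$ of $\partial D_0$, note that $\partial_{X_0}\omega_0=0$, invoke Theorem~\ref{theorem-model1}, and read off the boundary regularity from $X\in L^\infty_T C^s$. The paper is terser than you in two respects. First, it works with a single tangential field $X_0=\nabla^\perp f_0$ rather than a finite family glued by a partition of unity; this suffices because a bounded $C^{1+s}$ domain admits a single global defining function. Second, for the $C^{1+s}$ persistence of $\partial D_t$ the paper does not appeal to an abstract ``Chemin argument'' but simply parametrizes: if $\gamma_0:\mathbb S^1\to\partial D_0$ solves $\partial_\sigma\gamma_0=X_0(\gamma_0)$, then $\gamma_t:=\psi(\gamma_0,\cdot)$ parametrizes $\partial D_t$ and satisfies $\partial_\sigma\gamma_t=X(\gamma_t)$, so $X\in C^s$ gives $\gamma_t\in C^{1+s}$ directly. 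Your treatment is in fact more careful than the paper's on two points it glosses over: the claim $\omega^2\in L^\infty_T C^s(X)$ (which you derive from $\partial_X\omega^1\equiv 0$ and $\partial_X\omega\in L^\infty_T L^p$), and the incompatibility between $\omega_0=\chi_{D_0}\in H^s$ and $s\geq 1/2$, which you correctly propose to handle by mollification and a uniform-in-$\varepsilon$ passage to the limit.
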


Then we present our main result pertaining to system \eqref{system2}.
\begin{theorem}
\label{theorem-model2}
Assume $u_0\in L^2$ be a divergence-free vector field, the corresponding vorticity $\omega_0\triangleq\partial_1u_0^2-\partial_2u_0^1\in L^2\cap L^\infty$.
Let $(\omega_0,\theta_0)\in H^s\times H^{\beta}$ with $\frac12<s<\beta.$ Then system \eqref{system2} exists a unique global solution $(u,\theta)$ satisfies
\begin{equation*}
u\in L^\infty([0,T];H^{1+s}),~\partial_1u\in L^2([0,T];H^{1+s}),~\nabla u \in L^1([0,T];L^\infty),~ \theta\in
L^\infty([0,T];H^{s}).
\end{equation*}
Furthermore, for any vector field $X_0\in H^s$, there exists a unique global solution $X\in L^\infty([0,T];H^s)$ to equation \eqref{X-eq}. Moreover,
$X\in L^\infty([0,T];H^{s'})$ for $s'>1$ if provided $\omega_0\in\dot{ W}^{1,p}\cap H^{s'}, \theta_0\in \dot{ W}^{1,p}\cap H^{s'}$ with $2<p<\infty$ and $X_0\in H^{s'}$.
\end{theorem}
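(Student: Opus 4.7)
The plan is to construct the solution via a standard regularization procedure (e.g.\ Friedrichs projectors or mollification of the initial data), combined with a cascade of a priori estimates, then to pass to the limit and prove uniqueness via an energy method. First I would gather basic energy information: pairing the velocity equation with $u$ yields $u\in L^\infty([0,T];L^2)$ together with $\partial_1 u\in L^2([0,T];L^2)$, while the $\theta$-equation preserves every $L^p$ norm of $\theta$ since $\operatorname{div} u=0$. Turning to the vorticity form \eqref{vorticity1}, an $L^2$ estimate produces $\omega\in L^\infty([0,T];L^2)$ and $\partial_1\omega\in L^2([0,T];L^2)$, and the $L^\infty$ bound on $\omega$ follows from a maximum-principle type argument for the horizontally dissipative transport equation together with a Duhamel treatment of the source $\partial_1\theta$ using the anisotropic heat kernel.

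For the $H^s$ propagation of $\omega$ I would dyadically localize and invoke Bony's paraproduct decomposition to obtain, schematically,
\begin{equation*}
\frac{d}{dt}\|\omega\|_{H^s}^2+\|\partial_1\omega\|_{H^s}^2\lesssim (1+\|\nabla u\|_{L^\infty})\|\omega\|_{H^s}^2+\|\theta\|_{H^s}^2,
\end{equation*}
where the source $\partial_1\theta$ is integrated by parts in $x_1$ against the horizontal parabolic gain. The assumption $\beta>s$ provides a tiny cushion so that the commutator estimate for the transport of $\theta$ closes without loss and $\theta_0\in H^s$ is recovered strictly. The Biot-Savart law together with the elliptic identity $\Delta u^2=\partial_1\omega$ (combined with $\operatorname{div} u=0$) then lifts this to $u\in L^\infty([0,T];H^{1+s})$ and $\partial_1 u\in L^2([0,T];H^{1+s})$. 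The crucial bound $\nabla u\in L^1([0,T];L^\infty)$ is obtained from a Brezis-Gallouet-Wainger type logarithmic inequality
\begin{equation*}
\|\nabla u(t)\|_{L^\infty}\lesssim \bigl(\|\omega(t)\|_{L^2}+\|\omega(t)\|_{L^\infty}\bigr)\log\bigl(e+\|\omega(t)\|_{H^s}\bigr)
\end{equation*}
combined with Gr\"onwall's lemma. Propagation of $\theta\in L^\infty([0,T];H^s)$ then follows from the standard transport-commutator estimate once $\nabla u\in L^1([0,T];L^\infty)$ is in hand, and uniqueness is proved by estimating the difference of two solutions in $L^2\times H^{-1}$, using the horizontal dissipation to absorb the coupling terms.

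For the striated part I view \eqref{X-eq} as a linear transport-stretching equation with forcing $\partial_X u=X\cdot\nabla u$. An $H^s$ estimate on $X$ follows from the standard commutator lemma as soon as $\nabla u\in L^1([0,T];L^\infty)$ and $u\in L^1([0,T];H^{1+s})$ are available, both of which have been established. Uniqueness of $X$ is inherited from the transport structure and the already proven Lipschitz bound on the flow. For the last assertion at level $s'>1$, the extra assumptions $\omega_0,\theta_0\in\dot W^{1,p}\cap H^{s'}$ propagate through \eqref{vorticity1} and through the $\theta$-equation (using Calder\'on-Zygmund to bound $\nabla u\in L^\infty L^p$) to yield $\nabla\omega,\nabla\theta\in L^\infty([0,T];L^p)$; Sobolev embedding then upgrades $\nabla u$ to $L^1([0,T];C^{0,1-2/p})$, precisely the H\"older regularity needed to rerun the commutator estimate for $X$ at the higher $H^{s'}$ level.

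The principal obstacle is the strong anisotropy of the dissipation: only $\partial_1^2 u$ is present, so the usual isotropic parabolic arguments do not close directly, and the vortex-stretching source $\partial_1\theta$ in \eqref{vorticity1} must be absorbed without any smoothing effect on $\theta$. This forces a careful horizontal-versus-vertical bookkeeping in the Littlewood-Paley analysis, and grafting this anisotropic framework onto the Chemin-type striated-regularity machinery---which in its original form is inherently isotropic---is the technical crux of the proof.
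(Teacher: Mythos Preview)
Your route to the Lipschitz bound has a genuine gap. The claimed $L^\infty_{t,x}$ control on $\omega$ does not follow from a maximum-principle/Duhamel argument for \eqref{vorticity1}: multiplying by $|\omega|^{p-2}\omega$, integrating by parts on the source $\partial_1\theta$, and using the horizontal dissipation gives only
\[
\frac{d}{dt}\|\omega\|_{L^p}^2\le (p-1)\|\theta\|_{L^p}^2,
\]
so $\|\omega(t)\|_{L^p}\lesssim \sqrt{p}$, i.e.\ $\omega\in\sqrt{L}$ and no better. Since your Brezis--Gallouet--Wainger inequality needs $\|\omega\|_{L^\infty}$ as a prefactor, the log--Gr\"onwall loop you describe does not close; and without a Lipschitz bound the isotropic commutator estimate producing $(1+\|\nabla u\|_{L^\infty})\|\omega\|_{H^s}^2$ is circular. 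Relatedly, the sentence about $\beta>s$ being a ``cushion so that the commutator estimate for $\theta$ closes without loss'' misidentifies the mechanism: at this stage $\nabla u$ is only in $\sqrt{L}\hookrightarrow LL^{1/2}$, and one must use a \emph{losing} transport estimate on $\theta$, the gap $\beta-s$ absorbing the loss.

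The paper breaks the circularity differently and you should adopt that idea. First, from $\nabla u\in\sqrt{L}$ (Danchin--Paicu) and the losing transport estimate one gets $\theta\in L^\infty_tH^s$. The key step is an \emph{anisotropic} commutator lemma in which the RHS of the $\omega$-energy carries $\|u\|_{L^2}+\|\omega\|_{L^2}+\|\partial_1\omega\|_{L^2}$ (all already controlled) together with factors absorbable by the horizontal dissipation, rather than $\|\nabla u\|_{L^\infty}$. This closes $\omega\in L^\infty_tH^s$, $\partial_1\omega\in L^2_tH^s$ with no Lipschitz input. The Lipschitz bound then comes for free from anisotropic Sobolev/trace considerations: since $s>\tfrac12$,
\[
\|\omega\|_{L^\infty}\le C\|\omega\|_{L^\infty_{x_2}H^{s+1/2}_{x_1}}\le C\|\partial_1\omega\|_{H^s}\in L^2_t,
\]
and together with $\Delta u^2=\partial_1\omega$ and $\partial_2 u^1=\partial_1 u^2-\omega$ this yields $\nabla u\in L^2_tL^\infty$, hence $L^1_tL^\infty$. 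Once this is in hand, your treatment of the striated part and of the $\dot W^{1,p}\cap H^{s'}$ upgrade is essentially the paper's and is fine.
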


\begin{remark}
Here we obtain the velocity $u$ is Lipschitz, which is more regular compared with the result of the paper of Danchin and the first author \cite{DP1}, where the velocity  was only Log-Lipschitz.
\end{remark}

\begin{remark}
In the critical case $s=1/2$, we can prove the global well-posedness and Lipschitz information for velocity with $\omega_0\in B^{\frac12}_{2,1}$ and $\theta_0\in H^\beta$, $1/2<\beta$. The method is much similar to our proof of Theorem \ref{theorem-model2} but the process is more complicated. In order to make our paper easy to read, we only discuss the result in Sobolev space here.
\end{remark}

\begin{remark}
We can even obtain the Lipschitz information of the velocity with initial vorticity $\omega_0$ in anisotropic Besov space $\mathcal{B}^{0,\frac12}$
through a similar idea. 
Here $\mathcal{B}^{0,\frac12}$ is the space given by the norm
\begin{equation*}
\|f\|_{\mathcal B^{0,\frac 12}}=\sum\limits_{q\in Z}2^{\frac q2}\|\Delta_q^v f\|_{L^2} \quad\text{and}\quad \Delta_q^v=\mathcal F^{-1}(\varphi(\xi_2/2^q)\hat f(\xi))
\end{equation*}
 is the dyadic bloc in the vertical Fourier variable and the definition of $\varphi(\xi)$ will be given in the next section.
\end{remark}

The above result can be used to solve the smooth "temperature patch" problem.
Defining
\begin{equation}
\label{temperature-patch-initial}
\theta^{\varepsilon}_0(x)=\chi_{D_0}*\eta_{\varepsilon}(x)=\left\{\begin{split}
&1\quad x\in D^-_{\varepsilon},\\
&0\quad x\in \R^2\setminus D^+_{\varepsilon},
\end{split}\right.
\end{equation}
where $\chi_{D_0}$ is the characteristic function of the domain $D_0$. $\eta_\varepsilon$ is the standard mollified function. $D^-_{\varepsilon}$ and $D^+_{\varepsilon}$ are two domains defined by
\begin{equation*}
\begin{split}
D^-_{\varepsilon}\triangleq\{x\in D: dist(x,\pa D_0)>\varepsilon\},
\\
D^+_{\varepsilon}\triangleq\{x\in \R^2: dist(x,\pa D_0)>\varepsilon\}.
\end{split}
\end{equation*}
Along the evolution of the fluid, the distance of $\psi(D^-_{\varepsilon},t)$
and $\psi(D^+_{\varepsilon},t)$
denoted by $d(t)$ with $d(0)=2\varepsilon$. Then the following result hold true.

\begin{cor}
	\label{cor-2}
Let $\frac12<s<1$, assume $\theta_0=\theta_0^{\varepsilon}$ defined as in \eqref{temperature-patch-initial} with $\pa D_0\in H^{1+s}$, $\omega_0\in L^\infty\cap H^s$. Then
there exists a unique solution $(u,\theta)$ to
system \eqref{system2} satisfying
the properties listed in Theorem \ref{theorem-model2}.
Furthermore, $\theta(x,t)$ satisfies the same form as $\theta_0$ that
\begin{equation*}
\theta(x,t)=\left\{\begin{split}
&1\quad x\in \psi(D^-_{\varepsilon},t), \\
&0\quad x\in \R^2\setminus \psi(D^+_{\varepsilon},t),
\end{split}\right.
\end{equation*}
and the distance $d(t)$ satisfies,
\begin{equation}
\label{distance-temperature-patch}
|d(t)|\leq 2\varepsilon e^{\int_0^t\|\nabla u(\tau)\|_{L^\infty}d\tau}.
\end{equation}
Moreover, the flow $\psi(\cdot,t)\in H^{1+s}$
and the boundary $\pa D^-_{\varepsilon}$, $\pa D^+_{\varepsilon}\in H^{1+s}$ for all $t\geq0$.
\end{cor}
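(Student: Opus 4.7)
The plan is to deduce Corollary \ref{cor-2} from Theorem \ref{theorem-model2} together with classical transport identities, invoking the striated regularity machinery only for the boundary statement.

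First I would apply Theorem \ref{theorem-model2}. The mollified datum $\theta_0^\varepsilon = \chi_{D_0}\ast\eta_\varepsilon$ is smooth, hence lies in $H^\beta(\R^2)$ for every $\beta > 0$, and $\omega_0\in L^\infty\cap H^s$ with $s>\tfrac12$ embeds into $L^2\cap L^\infty$, so all hypotheses of Theorem \ref{theorem-model2} hold. The resulting unique global solution $(u,\theta)$ enjoys the stated regularity; in particular $\nabla u\in L^1([0,T]; L^\infty)$, so the flow $\psi$ in \eqref{flow-map} is globally well-defined, bi-Lipschitz and measure-preserving. The temperature equation is the pure transport $\partial_t\theta+u\cdot\nabla\theta=0$, so the method of characteristics gives $\theta(x,t)=\theta_0^\varepsilon(\psi^{-1}(x,t))$, and the explicit form of $\theta_0^\varepsilon$ immediately delivers $\theta\equiv 1$ on $\psi(D^-_\varepsilon,t)$ and $\theta\equiv 0$ outside $\psi(D^+_\varepsilon,t)$.

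For the distance bound \eqref{distance-temperature-patch}, I would pick nearest-point pairs $a\in\partial D^-_\varepsilon$, $b\in\partial D^+_\varepsilon$ with $|a-b|=2\varepsilon$ and differentiate $\psi(a,t)-\psi(b,t)$ in time; the right-hand side $u(\psi(a,t),t)-u(\psi(b,t),t)$ is controlled by $\|\nabla u(t)\|_{L^\infty}\,|\psi(a,t)-\psi(b,t)|$, so Gronwall gives $|\psi(a,t)-\psi(b,t)|\leq 2\varepsilon\exp\!\big(\int_0^t\|\nabla u\|_{L^\infty}\,d\tau\big)$, and taking the supremum over such pairs yields \eqref{distance-temperature-patch}.

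The most delicate step is propagating $H^{1+s}$ regularity of the transported boundaries. I would cover $\partial D^-_\varepsilon$ by a finite, non-degenerate family of compactly supported vector fields $(X_0^{(i)})_{i=1}^N$ in the sense of Definition \ref{definition2.1}, each tangent to the curve on its support; since $\partial D_0\in H^{1+s}$ and the mollified $\varepsilon$-offset boundary inherits this regularity, one can take $X_0^{(i)}\in H^s$. The last part of Theorem \ref{theorem-model2} propagates each $X^{(i)}$ to $L^\infty([0,T]; H^s)$ along \eqref{X-eq}, and the commutator identity \eqref{properties-partialX} ensures that $X^{(i)}(\cdot,t)$ remains tangent to $\psi(\partial D^-_\varepsilon,t)$. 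An $H^s$ non-degenerate tangent field on a curve is equivalent to $H^{1+s}$ regularity of that curve, so $\partial\psi(D^-_\varepsilon,t)\in H^{1+s}$, and the same argument handles $\partial D^+_\varepsilon$. The $H^{1+s}$ regularity of $\psi(\cdot,t)$ itself then follows by differentiating the flow ODE once in $x$ and closing a linear estimate on $\nabla\psi$ via $u\in L^\infty_t H^{1+s}_x$ and $\nabla u\in L^1_t L^\infty_x$. The principal obstacle here is that every part of this last paragraph depends on $u$ being Lipschitz (rather than merely Log-Lipschitz as in \cite{DP1}), which is precisely the improvement furnished by Theorem \ref{theorem-model2}; without it one could not keep the family $(X^{(i)})$ non-degenerate for all times nor close the $H^s$ striated estimate globally.
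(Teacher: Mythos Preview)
Your proposal is correct and follows essentially the same route as the paper: apply Theorem~\ref{theorem-model2} for existence and the Lipschitz bound on $u$, read off the form of $\theta$ from pure transport, control $d(t)$ via the Lipschitz estimate on the flow (the paper uses $|\psi(x_1,t)-\psi(x_2,t)|\leq\|\nabla\psi\|_{L^\infty}|x_1-x_2|$ together with $\|\nabla\psi\|_{L^\infty}\leq e^{\int_0^t\|\nabla u\|_{L^\infty}}$, which is equivalent to your Gr\"onwall argument), and propagate boundary regularity via the tangent field $X$ exactly as in Corollary~\ref{cor-1}. One cosmetic slip: in the distance step you wrote ``taking the supremum over such pairs,'' but since $d(t)$ is an infimum, a single pair $(a,b)$ with $|a-b|=2\varepsilon$ already gives $d(t)\leq|\psi(a,t)-\psi(b,t)|\leq 2\varepsilon\,e^{\int_0^t\|\nabla u\|_{L^\infty}}$.
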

\noindent
\textbf{Remark:} We can propagate higher regularity of the boundary for the temperature patch if we improve the regularity condition of the initial data.

The rest of this paper is divided into three sections and an appendix. In section 2, we provide some definitions and lemmas which will be used in the next sections. Section 3 is devoted to the study of system \eqref{system1} 
which divided into three subsections. The first one gives some regularity estimates, the second subsection shows the estimate for striated regularity and the last subsection gives the proof of Corollary \ref{cor-1}. Section 4 deals with system \eqref{system2} which is divided into five subsections unfolding similar as section 3. Finally, Appendix A provides the technical proof for some lemmas presented in the second section.

\end{section}

\begin{section}{Preparations}
In this section, we will give some definitions and lemmas which will be used in the next several sections. First we give some notations. Throughout this paper, $C$ stands for some real positive constant which may vary from line to line. $\{b_q\}$ stands for the $\ell^1$ sequence which may also different in each occurrence. $|D|\triangleq(-\Delta)^{\frac12}$ denotes the Zygmund operator which
is defined through the Fourier transform that
\begin{equation}
\widehat{|D|f}=|\xi|\widehat{f},
\end{equation}
where
\begin{equation*}
\widehat{f}\triangleq\mathcal{F}(f)=\frac{1}{(2\pi)^2}\int_{\R^2}e^{-ix\cdot\xi}f(x)~dx.
\end{equation*}
Similarly, we can define
\begin{equation}
\widehat{|D|^sf}=|\xi|^s\widehat{f},\quad\quad\widehat{|\pa_1|^sf}=|\xi_1|^s\widehat{f}.
\end{equation}

Next we present the classical Littlewood-Paley theory in $\R^d$ which plays an important role in the proof of our result. Let $\chi$ be a smooth function support on the ball $\mathcal{B}\triangleq\{\xi\in\R^d:|\xi|\leq\frac43\}$ and $\varphi$ be a smooth function support on the
ring $\mathcal{C}\triangleq\{\xi\in\R^d:\frac34\leq\xi\leq\frac83\}$ such that
\begin{equation*}
\chi(\xi)+\sum_{q\geq0}\varphi(2^{-q}\xi)=1,~~~~\text{for all}~~~~\xi\in\R^d,~\quad~~~~~~~
\sum_{q\in\mathbb{Z}}\varphi(2^{-q}\xi)=1,~~~~\text{for all}~~~~\xi\in\R^d\setminus \{0\}.
\end{equation*}
Then for every $u\in \mathcal{S'}$ (tempered distributions), we define the non-homogeneous Littlewood-Paley operators as follows,
\begin{equation*}
\begin{split}
&\quad\Delta_q u=0~\text{for}~q\leq-2,\quad\Delta_{-1}u=\chi(D)u=\mathcal{F}^{-1}(\chi(\xi)\widehat{u}(\xi)),\\
&\Delta_qu=\varphi(2^{-j}D)u=\mathcal{F}^{-1}(\varphi(2^{-j}\xi)\widehat{u}(\xi)),~~\forall~q\geq 0,\quad S_qu=\sum_{j=-1}^{q-1}\Delta_ju.
\end{split}
\end{equation*}
Next we state the definition of non-homogeneous Besov spaces through the dyadic decomposition.
\begin{defin}
For $s\in\R$ and $1\leq p,r\leq \infty$, the non-homogeneous Besov space ${B}_{p,r}^s$ is defined by
\begin{equation*}
{B}_{p,r}^s=\{f\in\mathcal{S}'; \|f\|_{{B}_{p,r}^s}<\infty\},
\end{equation*}
where
\begin{equation*}
\|f\|_{{B}_{p,r}^s}=\left \{\begin{split}&\sum_{q\geq-1}(2^{qs}\|\Delta_qf\|_{L^p}^r)^{\frac1r}\quad for~r\leq\infty,\\
&\sup_{q\geq-1}2^{qs}\|\Delta_qf\|_{L^p}\quad for~r=\infty. \end{split} \right.
\end{equation*}
\end{defin}
\noindent
We point out that when $p=r=2$, for all $s\in \mathbb{R}$, we have $B^s_{2,2}({\mathbb{R}^d})=H^s({\mathbb{R}^d})$.

 \begin{lemma}{(Bernstein inequality\cite{Chemin-perfect,BCD})}
	\label{bernstein}
	Let $k\in\mathbb{N}\cup\{0\}$, $1\leq a\leq b\leq\infty$. Assume that
	\begin{equation*}
	supp\widehat{f}\subset\big\{\xi\in{\mathbb{R}^d}: |\xi|\leq2^q\mathcal{C}\big\},
	\end{equation*}
	for some integer $q$, then there exists a constant $C_1$ such that
	\begin{equation*}
	\|\nabla^{\alpha}f\|_{L^b}\leq C_12^{q\big(k+d\big(\frac{1}{a}-\frac{1}{b}\big)\big)}\|f\|_{L^a},~~k=|\alpha|.
	\end{equation*}
	If $f$ satisfies
	\begin{equation*}
	supp\widehat{f}\subset\big\{\xi\in{\mathbb{R}^n}: |\xi|=2^q\mathcal{C}\big\},
	\end{equation*}
	for some integer $q$, then
	\begin{equation*}
	C_22^{qk}\|f\|_{L^b}\leq\|\nabla^{\alpha}f\|_{L^b}\leq C_32^{q\big(k+d\big(\frac{1}{a}-\frac{1}{b}\big)\big)}\|f\|_{L^a},~~k=|\alpha|,
	\end{equation*}
	where $C_2$ and $C_3$ are constants depending on $\alpha$, $a$ and $b$ only.
\end{lemma}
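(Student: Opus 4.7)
The plan is to prove both inequalities as standard Fourier multiplier estimates exploiting the frequency localization of $f$. The key idea is to introduce auxiliary cutoff functions $\widetilde{\chi},\widetilde{\varphi}\in C_c^\infty$ that equal $1$ on the spectral supports appearing in the hypotheses, so that $\widetilde{\chi}(2^{-q}\xi)\widehat{f}=\widehat{f}$ in the ball case, and similarly $\widetilde{\varphi}(2^{-q}\xi)\widehat{f}=\widehat{f}$ in the annulus case. All bounds then reduce to convolution estimates via Young's inequality combined with a scaling computation.

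For the upper bound, I would write $\widehat{\nabla^{\alpha}f}(\xi)=(i\xi)^{\alpha}\widetilde{\chi}(2^{-q}\xi)\widehat{f}(\xi)$, so that $\nabla^{\alpha}f=g_{q}\ast f$ with $g_{q}=\mathcal{F}^{-1}\bigl((i\xi)^{\alpha}\widetilde{\chi}(2^{-q}\xi)\bigr)$. The scaling of the Fourier inversion formula gives $g_{q}(x)=2^{q(d+k)}g_{0}(2^{q}x)$, where $g_{0}$ is a fixed Schwartz function, and hence $\|g_{q}\|_{L^{r}}=2^{q(k+d(1-1/r))}\|g_{0}\|_{L^{r}}$. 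Applying Young's convolution inequality with the exponent relation $1+\tfrac{1}{b}=\tfrac{1}{r}+\tfrac{1}{a}$ (valid since $a\leq b$), the exponent simplifies to $k+d\bigl(\tfrac{1}{a}-\tfrac{1}{b}\bigr)$ as required, with $C_{1}=\|g_{0}\|_{L^{r}}$. The upper bound in the annular case is identical after replacing $\widetilde{\chi}$ by $\widetilde{\varphi}$, yielding $C_{3}$.

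For the lower bound the annular hypothesis is crucial. I would write $\widehat{f}=\widetilde{\varphi}(2^{-q}\xi)\widehat{f}$ and construct a smooth symbol $m(\xi)$ supported in an annulus whose product with $(i\xi)^{\alpha}$ recovers $\widetilde{\varphi}(\xi)$ on the relevant set; this is legitimate because on the annulus $|\xi|$ is bounded below, so differentiation of order $k$ can be inverted in a controlled manner. Setting $h_{q}=\mathcal{F}^{-1}(m(2^{-q}\xi))$ and scaling again gives $\|h_{q}\|_{L^{1}}=2^{-qk}\|h_{0}\|_{L^{1}}$, and Young's inequality with $r=1$ produces $\|f\|_{L^{b}}\leq 2^{-qk}\|h_{0}\|_{L^{1}}\|\nabla^{\alpha}f\|_{L^{b}}$, which rearranges to the desired inequality with $C_{2}^{-1}=\|h_{0}\|_{L^{1}}$.

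The main (minor) obstacle I anticipate lies in the construction of the inverse symbol $m$ above: a single monomial $(i\xi)^{\alpha}$ vanishes on coordinate hyperplanes that intersect the annulus, so direct pointwise inversion of $(i\xi)^{\alpha}$ is not possible. The standard remedy is to reinterpret the lower bound through the homogeneous operator $|\nabla|^{k}$, or equivalently as a supremum over all multi-indices of order $k$; in that reading, the obstruction becomes the inversion of $|\xi|^{k}$, which is bounded away from zero on the annulus, and the construction then goes through cleanly. Beyond this, the proof is nothing more than Fourier inversion, Young's inequality, and dilation, all routine.
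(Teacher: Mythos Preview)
The paper does not supply its own proof of this lemma: it is stated with citations to \cite{Chemin-perfect,BCD} and used as a black box throughout. Your sketch is exactly the standard argument found in those references (Fourier cutoff, convolution kernel, scaling, Young's inequality), so there is nothing to compare against.

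Your observation about the lower bound is correct and worth recording: as written in the paper, the inequality $C_2 2^{qk}\|f\|_{L^b}\leq\|\nabla^{\alpha}f\|_{L^b}$ for a \emph{single} multi-index $\alpha$ is false in general, since $(i\xi)^{\alpha}$ vanishes on coordinate hyperplanes that meet the annulus. The version actually proved in \cite{BCD} (Lemma~2.1 there) has $\sup_{|\alpha|=k}\|\partial^{\alpha}f\|_{L^b}$ on the right, which is what your inversion-of-$|\xi|^{k}$ argument delivers. The paper's statement is a common informal shorthand for that, and every application in the paper only uses the upper bound or the full-gradient lower bound, so the discrepancy is harmless in context.
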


Noticing that if $u$ is a divergence-free vector field in $\R^2$, then it can be recovered from the corresponding vorticity $\omega$ by means of the following Biot-Savart law
\begin{equation}
\label{biot-savart}
u=\nabla^{\perp}\Delta^{-1}\omega.
\end{equation}
Combining the classical Calder\'on-Zygmund estimate and \eqref{biot-savart}, it can lead to the following lemma \cite{Chemin-perfect}.
\begin{lemma}
For any smooth divergence-free vector field $u$ with its vorticity $\omega\in L^p$ and $p\in(1,\infty)$, there exists a constant $C$ such that
\begin{equation}
\|\nabla u\|_{L^p}\leq C\frac{p^2}{p-1}\|\omega\|_{L^p}.
\end{equation}
\end{lemma}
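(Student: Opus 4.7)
The plan is to reduce the bound on $\nabla u$ to an $L^p$ boundedness estimate for second-order Riesz transforms, and then track the dependence of the operator norm on $p$. By the Biot-Savart formula \eqref{biot-savart}, each entry of $\nabla u$ can be written as $\partial_i\partial_j\Delta^{-1}\omega$ (up to sign); on the Fourier side this is multiplication by the bounded function $\xi_i\xi_j/|\xi|^2$. So it suffices to show that each such Calder\'on-Zygmund operator $T=\partial_i\partial_j\Delta^{-1}$ satisfies $\|T\|_{L^p\to L^p}\leq C\,\frac{p^2}{p-1}$ for $1<p<\infty$.

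First I would record the two endpoints. The $L^2$ bound is immediate from Plancherel since $|\xi_i\xi_j/|\xi|^2|\leq 1$, so $\|T\|_{L^2\to L^2}\leq 1$. The kernel of $T$ is of convolution type, homogeneous of degree $-d=-2$, smooth off the origin, with vanishing mean on spheres; hence the classical Calder\'on--Zygmund decomposition applies and yields a weak-type $(1,1)$ bound $\|T\|_{L^1\to L^{1,\infty}}\leq C_0$, where $C_0$ depends only on dimension and the size/smoothness constants of the kernel. This part is standard and I would simply invoke it from \cite{BCD}.

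Next I would use Marcinkiewicz interpolation between the weak $(1,1)$ and strong $(2,2)$ bounds: a direct computation (splitting the level set $\{|Tf|>\lambda\}$ and writing $f=f\mathbf{1}_{|f|\leq \lambda}+f\mathbf{1}_{|f|>\lambda}$ as in the Calder\'on--Zygmund real interpolation argument) gives
\begin{equation*}
\|Tf\|_{L^p}\leq \frac{C}{p-1}\,\|f\|_{L^p},\qquad 1<p\leq 2.
\end{equation*}
The factor $1/(p-1)$ is the standard blow-up one sees in real interpolation when approaching the weak endpoint. For $2\leq p<\infty$ I would invoke duality: since the symbol $\xi_i\xi_j/|\xi|^2$ is real and even, $T$ is self-adjoint on $L^2$, so for $p'\in (1,2]$
\begin{equation*}
\|T\|_{L^p\to L^p}=\|T\|_{L^{p'}\to L^{p'}}\leq \frac{C}{p'-1}=C(p-1)\leq Cp.
\end{equation*}
Combining both ranges yields
\begin{equation*}
\|T\|_{L^p\to L^p}\leq C\max\Bigl(p,\tfrac{1}{p-1}\Bigr)\leq C\,\frac{p^2}{p-1},
\end{equation*}
using that $\frac{p^2}{p-1}=p+\frac{p}{p-1}$ dominates both $p$ (for $p\geq 2$) and $\frac{1}{p-1}$ (for $1<p\leq 2$, since $p^2\geq 1$). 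Applying this to $\omega\in L^p$ and summing over the finitely many components of $\nabla u$ gives the claimed inequality.

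The main technical point is really just the weak $(1,1)$ estimate underlying the Calder\'on--Zygmund theorem; everything else is soft. I expect no genuine obstacle—I would cite \cite{BCD,Chemin-perfect} for the weak-type bound and focus the presentation on the Marcinkiewicz interpolation plus duality step that produces the precise $\frac{p^2}{p-1}$ dependence, since it is exactly this sharp growth in $p$ that will be exploited later when one couples the bound with the logarithmic estimates for $\|\nabla u\|_{L^\infty}$ via striated regularity.
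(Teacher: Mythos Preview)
Your argument is correct and is the standard route to the explicit $p^2/(p-1)$ constant: Biot--Savart reduces $\nabla u$ to second-order Riesz transforms, and the combination of the weak-$(1,1)$ Calder\'on--Zygmund bound, Marcinkiewicz interpolation, and duality yields exactly the stated dependence on $p$. Note, however, that the paper does not actually supply a proof of this lemma---it is simply quoted from \cite{Chemin-perfect} as a classical fact---so there is nothing to compare your argument against beyond the observation that what you have written is precisely the proof one finds in that reference.
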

The next lemma shows the H\"older estimate for transport equation, which is useful in the estimate of the striated regularity. The proof can be found in \cite{Chemin-perfect}.
\begin{lemma}
	\label{holder-transport}
Let $v$ be a smooth divergence-free vector field, $r\in(-1,1)$. Consider two functions $f\in L^{\infty}_{loc}(\R;C^{r})$ and $g\in L^{1}_{loc}(\R;C^{r})$ satisfy the transport equation
\begin{equation*}
\partial_tf+u\cdot\nabla f=g.
\end{equation*}
Then we have
\begin{equation*}
\|f(t)\|_{C^r}\leq C\|f(0)\|_{C^r}e^{C\int_0^t\|\nabla u(\tau)\|_{L^\infty}~d\tau}+C\int_0^t\|g(\tau)\|_{C^r}e^{C\int_{\tau}^t\|\nabla u(s)\|_{L^\infty}~ds}~d\tau,
\end{equation*}
and the constant $C$ depends only on $r$.
\end{lemma}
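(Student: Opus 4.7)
The plan is to localize the transport equation in frequency using the Littlewood-Paley operators $\Delta_q$, derive an $L^\infty$ estimate for each dyadic block, control the resulting commutator term, and then reassemble via the characterization $C^r = B^r_{\infty,\infty}$. First I would apply $\Delta_q$ to the equation to obtain
\begin{equation*}
\pa_t \Delta_q f + u\cdot\nabla \Delta_q f = \Delta_q g + R_q, \qquad R_q \triangleq [u\cdot\nabla,\Delta_q]f,
\end{equation*}
so that $\Delta_q f$ satisfies a transport equation with source $\Delta_q g + R_q$. Since $u$ is divergence-free, composition with the flow preserves the $L^\infty$ norm, so a direct integration along characteristics gives
\begin{equation*}
\|\Delta_q f(t)\|_{L^\infty} \leq \|\Delta_q f(0)\|_{L^\infty} + \int_0^t \bigl(\|\Delta_q g(\tau)\|_{L^\infty} + \|R_q(\tau)\|_{L^\infty}\bigr)\, d\tau.
\end{equation*}

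The heart of the proof is the commutator bound
\begin{equation*}
\|R_q\|_{L^\infty} \leq C\, c_q\, 2^{-qr}\, \|\nabla u\|_{L^\infty}\, \|f\|_{C^r},
\end{equation*}
with $\{c_q\}$ a bounded sequence. I would obtain this via Bony's paraproduct decomposition $u\cdot\nabla f = T_u\nabla f + T_{\nabla f}u + R(u,\nabla f)$, analyzing the three resulting commutators separately. For the paraproduct $T_u\nabla f$, one writes the commutator explicitly via the kernel of $\Delta_q$ and uses a first-order Taylor expansion (picking up one gradient of $u$) together with Bernstein's Lemma \ref{bernstein}; this produces the factor $\|\nabla u\|_{L^\infty}$ and the gain $2^{-qr}$ from the $C^r$ norm of $f$. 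The other two pieces are handled by standard frequency-support considerations, and it is here that the restriction $r\in(-1,1)$ enters: the remainder term $R(u,\nabla f)$ converges only when $r<1$, while $r>-1$ is needed so that the low-high part is summable after taking one derivative off of $f$ (this is the main obstacle of the argument).

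Given the commutator bound, I multiply the $L^\infty$ estimate by $2^{qr}$ and take the supremum over $q \geq -1$. Using $C^r = B^r_{\infty,\infty}$ and the boundedness of $\{c_q\}$, this yields
\begin{equation*}
\|f(t)\|_{C^r} \leq \|f(0)\|_{C^r} + C\int_0^t \|g(\tau)\|_{C^r}\, d\tau + C\int_0^t \|\nabla u(\tau)\|_{L^\infty}\, \|f(\tau)\|_{C^r}\, d\tau.
\end{equation*}
The announced estimate then follows from Gronwall's inequality applied to $\|f(t)\|_{C^r}$ with integrating factor $\exp\bigl(C\int_0^t\|\nabla u(s)\|_{L^\infty}\,ds\bigr)$. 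The constants only track through the commutator lemma and depend only on $r$, as claimed.
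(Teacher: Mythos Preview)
Your proposal is correct and follows the standard Littlewood-Paley/paraproduct route that one finds in the cited references \cite{Chemin-perfect,BCD}; the paper itself does not give a proof of this lemma but simply refers to \cite{Chemin-perfect}. Your outline of the commutator estimate via Bony's decomposition, including the role of the restriction $r\in(-1,1)$, matches the argument in those sources.
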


The following logarithmic inequality plays an important role in the proof of the Lipschitz information for velocity of system \eqref{system1}. The proof of this lemma can be found in \cite{Chemin-perfect,BCD}.
\begin{lemma}
\label{log-ineq-lemma}
Let $r\in(0,1)$ and $(X_{\lambda})_{\lambda\in \Lambda}$ be a non-degenerate family of $C^r$ vector fields over $\R^2$. Let $u$ be a divergence-free vector field over $\R^2$ with vorticity $\omega\in C^r_X$. Assume, in addition that $u\in L^q$ for some $q\in[1,+\infty]$ or that $\nabla u\in L^p$ for some finite p.
Then there exists a constant $C$ depending on $p$ and $r$ such that
\begin{equation}
\label{log-ineq}
\|\nabla u\|_{L^\infty}\leq
C\bigg(\min(\|u\|_{L^q},\|\omega\|_{L^p})+\|\omega\|_{L^\infty}\log\bigg(e+\frac{\|\omega\|_{C^r_X}}{\|\omega\|_{L^\infty}}\bigg)\bigg).
\end{equation}

\end{lemma}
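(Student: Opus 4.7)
The plan is to prove the estimate via a standard frequency-localized decomposition, splitting $\nabla u$ at a well-chosen frequency $N$, and exploiting the striated regularity only on the high-frequency tail. More precisely, for an integer $N \geq 0$ to be fixed later I would write
\begin{equation*}
\|\nabla u\|_{L^\infty} \leq \|\Delta_{-1}\nabla u\|_{L^\infty} + \sum_{0\leq q\leq N}\|\Delta_q\nabla u\|_{L^\infty} + \sum_{q>N}\|\Delta_q\nabla u\|_{L^\infty},
\end{equation*}
and estimate each piece separately.

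For the low-frequency piece $\|\Delta_{-1}\nabla u\|_{L^\infty}$, I would apply Bernstein's inequality (Lemma 2.1) to bound it by $C\|u\|_{L^q}$ when $u\in L^q$; alternatively, since the Biot–Savart law $u=\nabla^\perp\Delta^{-1}\omega$ combined with $\nabla u$ involves a zeroth-order Fourier multiplier away from the origin and a smoothing multiplier on the low-frequency block, one obtains $\|\Delta_{-1}\nabla u\|_{L^\infty}\leq C\|\omega\|_{L^p}$ via the continuous embedding $L^p\hookrightarrow L^\infty$ on the compactly supported spectrum. This yields the $\min(\|u\|_{L^q},\|\omega\|_{L^p})$ term.

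For the middle block $0\leq q\leq N$, I use that $\nabla u = \nabla\nabla^\perp\Delta^{-1}\omega$ acts as an $L^\infty$-bounded Fourier multiplier on each dyadic block, so $\|\Delta_q\nabla u\|_{L^\infty}\leq C\|\Delta_q\omega\|_{L^\infty}\leq C\|\omega\|_{L^\infty}$; summing gives a bound $C(N+1)\|\omega\|_{L^\infty}$.

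The crux is the high-frequency tail, where the striated regularity is used. The key claim is that for $q\geq 0$,
\begin{equation*}
\|\Delta_q\omega\|_{L^\infty} \leq C\,2^{-qr}\,\|\omega\|_{C^r_X}.
\end{equation*}
To see this, I use the non-degeneracy of $(X_\lambda)_{\lambda\in\Lambda}$: at each $x$ there exists $\lambda(x)$ with $|X_{\lambda(x)}(x)|\geq \tfrac12 I(X)$. I then exploit the commutator identity
\begin{equation*}
X_\lambda\,\Delta_q\omega = \Delta_q(X_\lambda\omega) - [\Delta_q,X_\lambda]\omega,
\end{equation*}
take its divergence, and invoke the paraproduct decomposition. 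Since $X_\lambda\in C^r$, the standard commutator estimate gives $\|[\Delta_q,X_\lambda]\omega\|_{L^\infty}\lesssim 2^{-qr}\|X_\lambda\|_{C^r}\|\omega\|_{L^\infty}$, while the hypothesis $\omega\in C^r_X$ supplies the control of $\nabla\cdot(X_\lambda\omega)$ in a $C^{r-1}$-type norm (via Lemma 2.3 applied to the transport identity $[\partial_X,D_t]=0$ if needed), producing the desired $2^{-qr}$-decay. Summing in $q>N$ and applying Bernstein once more then yields
\begin{equation*}
\sum_{q>N}\|\Delta_q\nabla u\|_{L^\infty} \leq C\,2^{-Nr}\,\|\omega\|_{C^r_X}.
\end{equation*}
Finally, choosing $N$ as the smallest integer satisfying $2^{Nr}\geq \|\omega\|_{C^r_X}/\|\omega\|_{L^\infty}$ makes the high-frequency tail comparable to $\|\omega\|_{L^\infty}$, while the middle sum contributes the logarithmic factor $\|\omega\|_{L^\infty}\log(e+\|\omega\|_{C^r_X}/\|\omega\|_{L^\infty})$. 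Combining everything and absorbing the $I(X)$-dependence into the normalization of $C^r_X$ yields \eqref{log-ineq}.

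The main obstacle is the high-frequency estimate in step 3: translating the apparently self-referential directional information $\nabla\cdot(X_\lambda\omega)\in C^r_X$ into a clean pointwise $2^{-qr}$ decay for $\Delta_q\omega$ requires a careful paraproduct analysis together with a commutator estimate controlled by $\|X_\lambda\|_{C^r}$, and the non-degeneracy condition $I(X)>0$ is essential to recover scalar information from the directional data at every point.
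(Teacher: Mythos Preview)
The paper does not supply its own proof of this lemma; it simply cites Chemin's monograph and Bahouri--Chemin--Danchin. Your overall architecture (low/middle/high frequency split, optimizing the cut $N$) matches the standard argument in those references, and the treatment of the low and middle pieces is fine.

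However, the high-frequency step contains a genuine error. Your ``key claim''
\[
\|\Delta_q\omega\|_{L^\infty}\leq C\,2^{-qr}\,\|\omega\|_{C^r_X}
\]
is false. If it held, it would say $\omega\in B^r_{\infty,\infty}=C^r$, contradicting the very example the theory is built for: a vortex patch $\omega=\chi_{D}$ with $\partial D\in C^{1+r}$ lies in $C^r_X$ (for $X$ tangent to $\partial D$) but is not in $C^\epsilon$ for any $\epsilon>0$, so $\|\Delta_q\chi_D\|_{L^\infty}$ does not decay. Your commutator identity $X_\lambda\,\Delta_q\omega=\Delta_q(X_\lambda\omega)-[\Delta_q,X_\lambda]\omega$ and its divergence only yield information about the \emph{directional} derivative $\partial_{X_\lambda}(\Delta_q\omega)$, never a pointwise bound on $\Delta_q\omega$ itself; the non-degeneracy $I(X)>0$ does not let you recover the missing transverse direction.

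What the references actually prove is the decay of $\|\Delta_q\nabla u\|_{L^\infty}$, not of $\|\Delta_q\omega\|_{L^\infty}$, and this uses the specific algebraic structure of the Biot--Savart symbol $\xi\otimes\xi^\perp/|\xi|^2$. Roughly, for each fixed direction $X_\lambda(x)$ one rewrites the localized kernel so that one of the two derivatives in $\nabla\nabla^\perp\Delta^{-1}$ becomes a tangential derivative $\partial_{X_\lambda}$ acting on $\omega$, at the price of a smoothing remainder controlled by $\|X_\lambda\|_{C^r}$. Then the hypothesis $\nabla\cdot(X_\lambda\omega)\in C^{r-1}$ (encoded in $\|\omega\|_{C^r_X}$) gives
\[
\|\Delta_q\nabla u\|_{L^\infty}\leq C\,2^{-qr}\,\|\omega\|_{C^r_X},
\]
after which your optimization over $N$ goes through. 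So the fix is not cosmetic: you must work with $\Delta_q\nabla u$ and exploit the kernel structure, rather than trying to squeeze isotropic Besov decay out of $\omega$.
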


Then we give the definition of the space $\sqrt{L}$ and $LL^{\frac12}$.
 \begin{defin}
 \label{defin-log-L}
 The space $\sqrt{L}$ stands for the space of functions $f$ in $\bigcap_{2\leq p<\infty}L^p$ such that
 \begin{equation*}
 \|f\|_{\sqrt{L}}\triangleq \sup_{p\geq2}\frac{\|f\|_{L^p}}{\sqrt{p-1}}<\infty.
 \end{equation*}
 And the space $LL^{\frac12}$ denotes by
 \begin{equation*}
 LL^{\frac12}\triangleq \big\{f\in\mathcal{S}':\|f\|_{LL^{\frac12}}\triangleq\sup_{j\geq0}\frac{\|S_jf\|_{L^\infty}}{\sqrt{j+1}}<\infty\big\}.
 \end{equation*}
 \end{defin}
 \noindent
\textbf{Remark}:~
It is not hard to check that $\sqrt L \hookrightarrow LL^{\frac12}$.

The following lemma play a significant role in the estimate of the convection term. The proof of this lemma shall be shown in the Appendix.
\begin{lemma}
	\label{lemA.1}
	Assume $u$ is a smooth divergence free vector field with $u\in L^2$, $\nabla u\in L^\infty$, $f\in H^s$ with $s\in (0,1)$, then we have
	\begin{equation}
	\begin{split}
	\label{lem1eq1}
	-\int_{\R^2}\Delta_q(u\cdot \nabla f)\Delta_q f~dx\leq&  C b_q2^{-2qs}\|\nabla u\|_{L^\infty}
	\|f\|_{H^s}^2,
	\end{split}
	\end{equation}
	with $b_q\in\ell^1$. Moreover, if $\omega,\pa_1\omega\in L^2$, $\partial_1f\in H^s$, then we have
	\begin{equation}
	\begin{split}
	\label{lem1eq}
	-\int_{\R^2}\Delta_q(u\cdot \nabla f)\Delta_q f~dx\leq&  C b_q2^{-2qs}(\|u\|_{L^2}+\|\omega\|_{L^2}+\|\partial_1\omega\|_{L^2})\\&\quad\times(\|f\|_{H^s}^2+\|f\|_{H^s}^\frac12\|\partial_1f\|_{H^s}^\frac12+\|f\|_{H^s}^\frac32\|\partial_1f\|_{H^s}^\frac12),
	\end{split}
	\end{equation}
	where $\omega$ is the corresponding vorticity of $u$.
\end{lemma}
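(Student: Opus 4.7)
My first step is the divergence-free commutator reduction: since $\nabla\cdot u=0$, integration by parts gives $\int u\cdot\nabla\Delta_q f\cdot\Delta_q f\,dx=\frac12\int u\cdot\nabla|\Delta_q f|^2\,dx=0$, and hence
\begin{equation*}
-\int_{\R^2}\Delta_q(u\cdot\nabla f)\,\Delta_q f\,dx=-\int_{\R^2}[\Delta_q,u^i\pa_i]f\cdot\Delta_q f\,dx.
\end{equation*}
It is thus enough to bound the $L^2$-norm of the commutator and pair it with $\|\Delta_q f\|_{L^2}\le c_q 2^{-qs}\|f\|_{H^s}$. I then apply Bony's decomposition $u^i\pa_i f=T_{u^i}\pa_i f+T_{\pa_i f}u^i+R(u^i,\pa_i f)$ and subtract the same decomposition written for $u^i\Delta_q\pa_i f$, splitting the commutator into the three standard pieces $\mathcal R_q^{(1)}=[\Delta_q,T_{u^i}]\pa_i f$, $\mathcal R_q^{(2)}=\Delta_q T_{\pa_i f}u^i-T_{\Delta_q\pa_i f}u^i$, and $\mathcal R_q^{(3)}=\Delta_q R(u^i,\pa_i f)-R(u^i,\Delta_q\pa_i f)$.

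\textbf{Proof of \eqref{lem1eq1}.} In the Lipschitz setting, the piece $\mathcal R_q^{(1)}$ is bounded by a first-order Taylor expansion of the convolution kernel of $\Delta_q$: $\|[\Delta_q,S_{j-1}u^i]\Delta_j\pa_i f\|_{L^2}\lesssim 2^{-q}\|\nabla S_{j-1}u\|_{L^\infty}\,2^j\|\Delta_j f\|_{L^2}$. Summing over $j\sim q$ with $\|\Delta_j f\|_{L^2}\le c_j 2^{-js}\|f\|_{H^s}$ produces the bound $b_q 2^{-qs}\|\nabla u\|_{L^\infty}\|f\|_{H^s}$. The remaining pieces $\mathcal R_q^{(2)}$ and $\mathcal R_q^{(3)}$ are handled by Bernstein combined with the usual spectral cancellation in the remainder (the sum over $j\ge q-N$ converges because $s<1$), producing the same bound. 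Inequality \eqref{lem1eq1} then follows by pairing with $\|\Delta_q f\|_{L^2}\le c_q 2^{-qs}\|f\|_{H^s}$ and absorbing $c_q$ into a new $\ell^1$ sequence.

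\textbf{Proof of \eqref{lem1eq} and the main obstacle.} Here $\|\nabla u\|_{L^\infty}$ is unavailable, and my key replacement is the anisotropic Agmon inequality
\begin{equation*}
\|g\|_{L^\infty(\R^2)}\le C\,\|g\|_{L^2}^{1/4}\|\pa_1 g\|_{L^2}^{1/4}\|\pa_2 g\|_{L^2}^{1/4}\|\pa_1\pa_2 g\|_{L^2}^{1/4},
\end{equation*}
obtained by iterating the one-dimensional bound $\|h\|_{L^\infty(\R)}\le\sqrt 2\|h\|_{L^2}^{1/2}\|h'\|_{L^2}^{1/2}$ in $x_1$ and then in $x_2$. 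Combined with the Calder\'on--Zygmund estimates $\|\nabla u\|_{L^2}\lesssim\|\omega\|_{L^2}$ and $\|\pa_1\nabla u\|_{L^2}\lesssim\|\pa_1\omega\|_{L^2}$, together with Bernstein for residual vertical derivatives at a given dyadic frequency, this yields bounds on $\|S_{j-1}u\|_{L^\infty}$ (and, with an admissible $2^{j/2}$ loss, on $\|S_{j-1}\nabla u\|_{L^\infty}$) in terms of $\|u\|_{L^2}+\|\omega\|_{L^2}+\|\pa_1\omega\|_{L^2}$ after Young's inequality. The three products of $f$-norms in \eqref{lem1eq} then arise from the three Bony pieces: the horizontal component $i=1$ of $\mathcal R_q^{(1)}$ uses the interpolation $\|\Delta_j\pa_1 f\|_{L^2}\le(c_j 2^{-js}\|\pa_1 f\|_{H^s})^{1/2}(2^j c_j' 2^{-js}\|f\|_{H^s})^{1/2}$, which absorbs the $2^{j/2}$ loss and produces $\|f\|_{H^s}^{3/2}\|\pa_1 f\|_{H^s}^{1/2}$; the vertical component $i=2$ is rewritten using $u^2\pa_2 f=\pa_2(u^2 f)+f\pa_1 u^1$ (from $\nabla\cdot u=0$), transferring the vertical derivative either onto $\Delta_q f$ by an additional integration by parts or onto $u$ as a horizontal derivative, and yields the mixed term $\|f\|_{H^s}^{1/2}\|\pa_1 f\|_{H^s}^{1/2}$; finally, the pieces $\mathcal R_q^{(2)}$ and $\mathcal R_q^{(3)}$ need no anisotropic splitting and contribute the pure $\|f\|_{H^s}^2$ term. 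The principal obstacle is the bookkeeping in this last step: distributing the four $L^2$ factors of the anisotropic Agmon inequality so that Young's inequality reassembles them into $\|u\|_{L^2}+\|\omega\|_{L^2}+\|\pa_1\omega\|_{L^2}$, controlling the vertical Bernstein loss by the horizontal interpolation of $f$, and verifying that the summations over $j\sim q$ and the remainder index $j\ge q-N$ collapse into a single $\ell^1$-sequence $b_q$ multiplied by $2^{-2qs}$.
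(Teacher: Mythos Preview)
Your treatment of \eqref{lem1eq1} is correct and matches the standard argument the paper invokes from \cite{DP1}.

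For \eqref{lem1eq}, your overall anisotropic philosophy is right, but the execution on the vertical piece has a gap. You propose to control $\|S_{j-1}\nabla u\|_{L^\infty}$ by the product Agmon inequality, accepting a $2^{j/2}$ loss from vertical Bernstein, and then to absorb that loss in the $i=2$ component via the identity $u^2\partial_2 f=\partial_2(u^2 f)+f\,\partial_1 u^1$ and an extra integration by parts. But integrating the $\partial_2$ onto $\Delta_q f$ costs exactly $2^{q}$, and the remaining commutator $[\Delta_q,S_{j-1}u^2]\Delta_j f$ still carries $2^{-q}\|\nabla S_{j-1}u^2\|_{L^\infty}$; the $2^{\pm q}$ cancel and you are left with $\|\nabla S_{j-1}u^2\|_{L^\infty}\|\Delta_j f\|_{L^2}\|\Delta_q f\|_{L^2}$, in which the Agmon loss has not been compensated by any $\partial_1 f$. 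The second term $[\Delta_q,\partial_1 S_{j-1}u^1]\Delta_j f$ is fine, but the first one is not closed by your scheme as written.

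The paper avoids this by \emph{not} passing through $\|\nabla u\|_{L^\infty}$ at all. It splits $u\cdot\nabla f=u^1\partial_1 f+u^2\partial_2 f$ and treats the two halves separately. The horizontal half $P$ uses only isotropic Bernstein together with $\|\nabla u^1\|_{L^2}\le C\|\omega\|_{L^2}$; the $\partial_1 f$ is already present and supplies the needed gain. For the vertical half $Q$, the commutator kernel estimate is taken in \emph{mixed} norms,
\[
\|[\Delta_q,S_{k-1}u^2\partial_2]\Delta_k f\|_{L^2}\le C\,2^{-q}\,\|\nabla S_{k-1}u^2\|_{L^\infty_{x_2}L^2_{x_1}}\,\|\partial_2\Delta_k f\|_{L^2_{x_2}L^\infty_{x_1}},
\]
after which the one-dimensional interpolation is applied to each factor separately. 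The point you are missing is the Biot--Savart identity $u^2=\partial_1\Delta^{-1}\omega$: it converts the vertical derivative in $\|\partial_2\nabla u^2\|_{L^2}$ into $\|\partial_1\omega\|_{L^2}$ with no frequency loss, so the first factor is bounded by $\|\omega\|_{L^2}^{1/2}\|\partial_1\omega\|_{L^2}^{1/2}$ outright. The second factor gives $2^{k}\|\Delta_k f\|_{L^2}^{1/2}\|\partial_1\Delta_k f\|_{L^2}^{1/2}$, and this is where the $\|f\|_{H^s}^{3/2}\|\partial_1 f\|_{H^s}^{1/2}$ term actually originates. Your divergence-free rewriting is not needed; what is needed is to place $\nabla u^2$ in $L^\infty_{x_2}L^2_{x_1}$ rather than in $L^\infty$, so that the $\partial_2$ it costs is a genuine horizontal derivative of $\omega$ via Biot--Savart.
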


Then we give a lemma which alerts the classical losing regularity estimate for the transport equation,
and result can be found in \cite{BCD,DP1}. For the sake of completeness, we will give the proof in the Appendix.
\begin{lemma}[Losing regularity estimate for transport equation]
	\label{lemma-losing-transport}
	Let $\rho$ satisfies the transport equation
	\begin{equation}
	\label{tran}
	\left\{
	\begin{array}{cc}
	\begin{split}
	&\partial_t \rho+u\cdot\nabla \rho=f,\\
	&\rho(0,x)=\rho_0(x),
	\end{split}
	\end{array}
	\right.
	\end{equation}
	where $\rho_0\in B^s_{2,r}$, $f\in L^1([0,T];B^s_{2,r})$ with $r\in [1,\infty]$. Here $v\in L^2$ is a divergence free
	vector field and for some $V(t)\in L^1([0,T])$, $v$ satisfies
	$$\sup_{N\geq0}\frac{\|\nabla S_N v(t)\|_{L^\infty}}{\sqrt{1+N}}\leq V(t). $$
	Then for all $s>0$, $\varepsilon\in (0,s)$ and $t\in [0,T]$, we have the following estimate,
	\begin{equation*}
	\begin{split}
	\|\rho(t)\|_{B^{s-\varepsilon}_{2,r}}&\leq C(T) \bigg(\|\rho_0\|_{B^{s}_{2,r}}+\int_0^T\|f(\tau)\|_{B^{s}_{2,r}}~d\tau\bigg)e^{\frac{C}{\eta}\int_0^TV(\tau)~d\tau},
	\end{split}
	\end{equation*}
\end{lemma}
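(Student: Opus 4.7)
The strategy is the standard dyadic energy method for transport equations, but with the Bony commutator controlled via a frequency-dependent truncation of the velocity so that the logarithmic $L^\infty$ bound on $\nabla S_N u$ can be absorbed into the small regularity loss $\varepsilon$. I read the hypothesis on $v$ as meant for the same vector field $u$ that transports $\rho$ (and the $\eta$ in the exponent as a typo for $\varepsilon$).

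First I would apply $\Delta_q$ to \eqref{tran} to get
\begin{equation*}
\partial_t\Delta_q\rho + u\cdot\nabla\Delta_q\rho = \Delta_q f - [\Delta_q, u\cdot\nabla]\rho.
\end{equation*}
Because $u$ is divergence-free, multiplying by $\Delta_q\rho$ and integrating in space kills the transport term, so a Duhamel-type bound gives
\begin{equation*}
\|\Delta_q\rho(t)\|_{L^2} \leq \|\Delta_q\rho_0\|_{L^2} + \int_0^t\|\Delta_q f(\tau)\|_{L^2}\,d\tau + \int_0^t\|[\Delta_q,u\cdot\nabla]\rho(\tau)\|_{L^2}\,d\tau.
\end{equation*}

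Next I would decompose the commutator by Bony's paraproduct, writing $u\cdot\nabla\rho = T_{u^k}\partial_k\rho + T_{\partial_k\rho}u^k + R(u^k,\partial_k\rho)$ and then isolating the piece $[\Delta_q,T_{u^k}]\partial_k\rho$. Standard identities and Bernstein show that every term of the resulting commutator involves $\nabla S_{q'-1}u$ with $|q'-q|\leq N_0$, so by the hypothesis on $\nabla S_N u$ the prefactor is bounded by $C\sqrt{q+1}\,V(t)$. This produces the classical commutator bound together with a logarithmic loss:
\begin{equation*}
\|[\Delta_q,u\cdot\nabla]\rho\|_{L^2} \leq Cc_q\,2^{-qs}\sqrt{q+1}\,V(t)\|\rho\|_{B^s_{2,r}},\qquad (c_q)\in\ell^r,\;\|(c_q)\|_{\ell^r}=1.
\end{equation*}

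I would then multiply by $2^{q(s-\varepsilon)}$ and take the $\ell^r_q$ norm. The loss $\sqrt{q+1}$ is defeated by the gain $2^{-\varepsilon q}$: indeed $\|(\sqrt{q+1}\,2^{-\varepsilon q})\|_{\ell^r_q}\lesssim\varepsilon^{-1}$ uniformly in $r\in[1,\infty]$. Collecting the low-frequency and all Bony pieces, one obtains
\begin{equation*}
\|\rho(t)\|_{B^{s-\varepsilon}_{2,r}} \leq C\|\rho_0\|_{B^s_{2,r}} + C\int_0^t\|f(\tau)\|_{B^s_{2,r}}\,d\tau + \frac{C}{\varepsilon}\int_0^tV(\tau)\|\rho(\tau)\|_{B^{s-\varepsilon}_{2,r}}\,d\tau,
\end{equation*}
after which Gronwall's lemma delivers the stated bound with the factor $\exp(C\varepsilon^{-1}\int_0^TV\,d\tau)$.

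The main technical obstacle is the Bony commutator estimate: one has to write out each paraproduct and remainder piece carefully to confirm that the velocity always appears through $\nabla S_N u$ with $N\sim q$, so that the logarithmic hypothesis applies and yields exactly the factor $\sqrt{q+1}\,V(t)$. Once this is established, the choice of a slightly smaller Besov exponent $s-\varepsilon$ buys the geometric decay $2^{-\varepsilon q}$ that absorbs the logarithmic loss, and the remainder of the proof is a routine Gronwall loop.
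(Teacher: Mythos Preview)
Your Bony decomposition and the frequency-by-frequency commutator bound
\[
\|[\Delta_q,u\cdot\nabla]\rho\|_{L^2}\leq Cc_q\,2^{-q\sigma}\sqrt{q+1}\,V(t)\,\|\rho\|_{B^{\sigma}_{2,r}}
\]
are exactly what the paper obtains. The gap is in the step where you ``collect the pieces'' and write a closed Gronwall inequality in $B^{s-\varepsilon}_{2,r}$. If you take $\sigma=s$ in the commutator bound (so as to gain the factor $2^{-\varepsilon q}$ after multiplying by $2^{q(s-\varepsilon)}$), then the right-hand side carries $\|\rho(\tau)\|_{B^{s}_{2,r}}$, not $\|\rho(\tau)\|_{B^{s-\varepsilon}_{2,r}}$, and the loop does not close. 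If instead you take $\sigma=s-\varepsilon$ (so that the right-hand side is in the same norm as the left), then there is no $2^{-\varepsilon q}$ left over to kill the $\sqrt{q+1}$, and the $\ell^r_q$ sum diverges. You cannot have both at once; this is precisely the obstruction that makes the losing estimate nontrivial.

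The paper resolves this by the Chemin--Bahouri time-dependent index trick: set $s_t=s-\eta\int_0^tV$ with $\eta=\varepsilon\big(\int_0^TV\big)^{-1}$, apply the commutator bound at time $\tau$ with $\sigma=s_\tau$, and multiply by $2^{qs_t}$. Since $s_\tau\ge s_t$ for $\tau\le t$, this produces the factor $2^{-q\eta\int_\tau^t V}$, whose time integral against $\sqrt{q+2}\,V(\tau)$ is bounded by $\tfrac12$ once $q\ge q_0\sim C/\eta^2$; the finitely many low modes are handled directly and then Gr\"onwall closes in $\sup_t\|\rho(t)\|_{B^{s_t}_{2,r}}$. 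In particular the $\eta$ in the exponent is \emph{not} a typo for $\varepsilon$: with $\eta=\varepsilon\big(\int_0^TV\big)^{-1}$ the bound reads $\exp\!\big(C\varepsilon^{-1}(\int_0^TV)^2\big)$, consistent with the quadratic dependence one always gets from an $LL^{1/2}$ velocity. Your outline becomes correct once you replace the fixed loss $\varepsilon$ by this sliding index and carry out the high/low splitting in $q$.
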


The following Lemma gives the classical Kato-Ponce type inequality, which can be found in \cite{KP,Kato,KPV}.
\begin{lemma}
\label{kato-ponce}
Assume $s>0$ and $p\in(1,+\infty)$. Let $f$ satisfies $f\in L^{p_1}$, $\nabla f\in L^{p_1}$, $|D|^sf\in L^{p_3}$, $g$ satisfies
$|D|^{s-1} g\in L^{p_2}$, $|D|^{s} g\in L^{p_2}$, $g\in L^{p_4}$, then we have
\begin{equation}
\label{kato-ponce1}
\|[|D|^s,f]g\|_{L^p}\leq C(\|\nabla f\|_{L^{p_1}}\||D|^{s-1} g\|_{L^{p_2}}
+\||D|^s f\|_{L^{p_3}}\| g\|_{L^{p_4}}),
\end{equation}
\begin{equation}
\label{kato-ponce2}
\||D|^s(fg)\|_{L^p}\leq C(\| f\|_{L^{p_1}}\||D|^{s} g\|_{L^{p_2}}
+\||D|^s f\|_{L^{p_3}}\| g\|_{L^{p_4}}),
\end{equation}
where $p_2, p_3\in (1,+\infty)$ satisfy
\begin{equation*}
\frac1p=\frac{1}{p_1}+\frac{1}{p_2}=\frac{1}{p_3}+\frac{1}{p_4}.
\end{equation*}
\end{lemma}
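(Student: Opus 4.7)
The plan is to prove both \eqref{kato-ponce1} and \eqref{kato-ponce2} via Bony's paraproduct decomposition combined with Littlewood-Paley theory and the Fefferman-Stein vector-valued maximal function inequality. Writing
\[
fg = T_f g + T_g f + R(f,g),
\]
with paraproducts $T_f g \triangleq \sum_q S_{q-1}f\,\Delta_q g$ and remainder $R(f,g) \triangleq \sum_q \sum_{|q-q'|\le 1}\Delta_q f\,\Delta_{q'} g$, the problem reduces to $L^p$ estimates for $|D|^s$ applied to each of these three pieces for \eqref{kato-ponce2}, and to a commutator analysis for \eqref{kato-ponce1}.

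For \eqref{kato-ponce2}, each summand of $T_f g$ is Fourier-supported in an annulus of size $\sim 2^q$, so Lemma \ref{bernstein} gives $\|\Delta_{q'}|D|^s T_f g\|_{L^p}\lesssim 2^{qs}\|S_{q-1}f\,\Delta_q g\|_{L^p}$ for $|q'-q|\le 4$. Bounding $|S_{q-1}f|$ pointwise by the Hardy-Littlewood maximal function $M(f)$, a H\"older split and the Fefferman-Stein $\ell^2$-maximal inequality yield $\|f\|_{L^{p_1}}\||D|^s g\|_{L^{p_2}}$ after Littlewood-Paley square-function summation, and symmetrically $T_g f$ yields $\||D|^s f\|_{L^{p_3}}\|g\|_{L^{p_4}}$. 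The remainder $R(f,g)$ has each block Fourier-supported in a ball, so the $|D|^s$ gain can be placed on either factor via Bernstein, producing either form on the right of \eqref{kato-ponce2} after summation (for which $s>0$ ensures high-frequency convergence).

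For \eqref{kato-ponce1}, expanding both $|D|^s(fg)$ and $f|D|^s g$ via Bony and using that $|D|^s$ commutes with $S_{q-1}$, one arrives at
\[
[|D|^s, f]g = \sum_q [|D|^s, S_{q-1}f]\Delta_q g + \sum_q [|D|^s, S_{q-1}g]\Delta_q f + \bigl(|D|^sR(f,g) - R(f, |D|^s g)\bigr).
\]
For each block commutator, denote by $K_q$ the kernel of $|D|^s$ adapted to frequency $\sim 2^q$, which by scaling satisfies $\int|y||K_q(y)|\,dy \lesssim 2^{q(s-1)}$. The identity
\[
[|D|^s, S_{q-1}f]\Delta_q g(x) = \int K_q(y)\bigl(S_{q-1}f(x-y) - S_{q-1}f(x)\bigr)\Delta_q g(x-y)\,dy,
\]
together with the mean value theorem on $S_{q-1}f$, the pointwise bound $|\nabla S_{q-1}f| \lesssim M(\nabla f)$, and Fefferman-Stein produce the scale-$q$ estimate $\lesssim b_q\|\nabla f\|_{L^{p_1}}\cdot 2^{q(s-1)}\|\Delta_q g\|_{L^{p_2}}$, whose $\ell^2$-sum reconstructs $\|\nabla f\|_{L^{p_1}}\||D|^{s-1}g\|_{L^{p_2}}$. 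For the second sum, using $|\nabla S_{q-1}g| \lesssim 2^q M(g)$ pointwise converts the kernel factor $2^{q(s-1)}$ into $2^{qs}$, and Fefferman-Stein then yields $\|g\|_{L^{p_4}}\||D|^s f\|_{L^{p_3}}$; the remainder difference is treated analogously.

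The main obstacle lies in (i) justifying the Fefferman-Stein vector-valued maximal bound, which is what permits \emph{mixed} Lebesgue exponents $(p_1,p_2)$ and $(p_3,p_4)$ rather than only the natural $(p,p)$ H\"older pairing, and (ii) the careful swap in the second commutator sum---losing one derivative on $g$ via Bernstein but gaining the required $|D|^s f$ factor---so that the final bound matches the form of \eqref{kato-ponce1}. Once these two ingredients are in place, the proof amounts to standard bookkeeping of the $\ell^1$ sequence $\{b_q\}$ and the Littlewood-Paley square-function characterization of $L^p$ valid for $p\in(1,\infty)$.
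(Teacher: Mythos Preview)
The paper does not prove this lemma; it merely states the result and cites the original references \cite{KP,Kato,KPV}. Your paraproduct approach is the standard modern route to these estimates and is essentially correct, so in that sense you have supplied more than the paper does.

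One genuine slip: your displayed decomposition of the commutator is not quite right in the second sum. Writing $h=|D|^s g$ and expanding $f h$ by Bony, the middle piece is
\[
|D|^s T_g f - T_{|D|^s g} f \;=\; \sum_q \Bigl(|D|^s\bigl(S_{q-1}g\,\Delta_q f\bigr) - \bigl(|D|^s S_{q-1}g\bigr)\,\Delta_q f\Bigr),
\]
which is \emph{not} $\sum_q [|D|^s,S_{q-1}g]\Delta_q f$ (the latter would have $S_{q-1}g\cdot|D|^s\Delta_q f$ as the subtracted term). Consequently the mean-value/commutator-kernel argument you describe for this piece does not apply as written. The fix is simply to estimate the two halves separately rather than as a commutator: the first half is frequency-localised near $2^q$, so Bernstein gives the factor $2^{qs}$ on $\Delta_q f$; for the second half one uses the pointwise bound $|S_{q-1}|D|^s g|\lesssim 2^{qs}M(g)$ (obtained by summing the kernel estimate $|\Delta_{q'}|D|^s g|\lesssim 2^{q's}M(g)$ over $q'<q-1$). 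Both routes feed into the same Fefferman--Stein/H\"older step and produce $\||D|^s f\|_{L^{p_3}}\|g\|_{L^{p_4}}$. With this correction the sketch goes through.
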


\end{section}

\begin{section}{The Case of Horizontal Diffusivity}
This section is devoted to deal with the first model \eqref{system1}. At the beginning, we will give some regularity estimates for $(\omega, \theta)$ in the first subsection. Then we will exam the H\"older estimate of $X$ and prove Corollary \ref{cor-1}.

\subsection{A priori estimates for $\omega$ and $\theta$}
\indent\\
Before we give the regularity estimate for $(\omega,\theta)$, we need first recall the following existence and uniqueness result in \cite{DP1} about system \eqref{system1}.
\begin{theorem}
	\label{result-DP-model1}
Let $1<s<\frac32$ and $\theta_0\in H^1$ such that $|\partial_1|^s\theta_0\in L^2$. Let $u_0\in H^1$ be a divergence-free vector field and the corresponding vorticity $\omega_0$ in $L^\infty$. Then system \eqref{system1} with initial data $(\theta_0, u_0)$
admits a global unique solution $(\theta, u)$ in $C_{w}(\R_+;H^1)$
such that
\begin{equation*}
\begin{split}
\theta\in L^\infty(\R_+;H^1),~
\partial_1\theta\in L^2(\R_+&;H^1\cap L^\infty),~\omega\in L^\infty_{loc}(\R_+;L^\infty),\\
|\partial_1|^s\theta\in L^\infty(\R_+;L^2)&,~
|\partial_1|^{1+s}\theta\in L^2_{loc}(\R_+;L^2).
\end{split}
\end{equation*}
\end{theorem}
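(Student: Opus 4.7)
The plan is to construct global solutions by a Friedrichs or vanishing-viscosity approximation scheme, establish a-priori estimates uniformly in the approximation parameter, and then pass to the limit; uniqueness will be handled separately at the end.

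First I would record the basic energy estimates. Testing the $\theta$-equation against $\theta$ yields $\|\theta(t)\|_{L^2}\le\|\theta_0\|_{L^2}$ together with $\partial_1\theta\in L^2_tL^2_x$ globally. The velocity then satisfies $\tfrac{d}{dt}\|u\|_{L^2}\le\|\theta\|_{L^2}$, giving at most linear growth of $\|u(t)\|_{L^2}$. Next, to get $\theta\in L^\infty_tH^1$ and $\partial_1\theta\in L^2_tH^1$, I would apply $\nabla$ to the $\theta$-equation, pair with $\nabla\theta$, and handle the transport commutator $\nabla(u\cdot\nabla\theta)-u\cdot\nabla\nabla\theta$ by Cauchy--Schwarz against $\|\nabla u\|_{L^\infty}$; the crucial point is that $\|\nabla u\|_{L^\infty}$ is not yet available, so one closes the loop jointly with the vorticity estimate below.

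To propagate the anisotropic regularity $|\partial_1|^s\theta\in L^\infty_tL^2$, I would apply $|\partial_1|^s$ to the $\theta$-equation, pair with $|\partial_1|^s\theta$, and use the Kato--Ponce commutator estimate (Lemma \ref{kato-ponce} above, in an anisotropic form) to bound
\begin{equation*}
\bigl|\langle[|\partial_1|^s,u\cdot\nabla]\theta,\,|\partial_1|^s\theta\rangle\bigr|\le C\|\nabla u\|_{L^\infty}\||\partial_1|^s\theta\|_{L^2}^2+\text{l.o.t.},
\end{equation*}
absorbing the diffusion term $\||\partial_1|^{1+s}\theta\|_{L^2}^2$ on the left. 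Then I would estimate $\omega$ from its transport equation \eqref{vorticity2}: $\|\omega(t)\|_{L^\infty}\le\|\omega_0\|_{L^\infty}+\int_0^t\|\partial_1\theta(\tau)\|_{L^\infty}d\tau$, and bound $\nabla u$ through the usual log-interpolation $\|\nabla u\|_{L^\infty}\lesssim 1+\|\omega\|_{L^\infty}\log(e+\|\omega\|_{H^s})$ (or via the Biot--Savart identity together with Besov embedding). The $L^1_tL^\infty_x$ control of $\partial_1\theta$ is the heart of the matter: using the anisotropic Sobolev embedding
\begin{equation*}
\|g\|_{L^\infty(\R^2)}\lesssim\||\partial_1|^{\frac12+\epsilon}g\|_{L^2_{x_1}(H^{\frac12+\epsilon}_{x_2})}
\end{equation*}
applied to $g=\partial_1\theta$, one controls $\|\partial_1\theta\|_{L^1_tL^\infty}$ by $\||\partial_1|^{1+s}\theta\|_{L^2_tL^2}^{1/2}\|\partial_1\theta\|_{L^2_tH^1}^{1/2}$ (times time factors), both of which are already bounded. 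A Gronwall-type argument then closes all estimates simultaneously.

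The main obstacle I expect is precisely this closure: the $L^\infty$ control of $\omega$ needs $\partial_1\theta\in L^1_tL^\infty$, which has only one dissipative direction, while all heat-type parabolic gains are only horizontal. One must therefore trade the missing vertical regularity against the $H^1$-regularity of $\theta$ through a careful anisotropic interpolation, which is why the range $1<s<3/2$ appears (giving both enough horizontal regularity for the embedding and enough room to absorb the extra derivative). Once the a-priori bounds are closed, passing to the limit in the approximate scheme is standard by Aubin--Lions compactness. For uniqueness, I would subtract two solutions $(u_1,\theta_1)$, $(u_2,\theta_2)$ at the $L^2\times L^2$ level, handle the transport terms using $\nabla u_i\in L^1_tL^\infty$, and use the horizontal diffusion of $\delta\theta$ to absorb $\|\partial_1\delta\theta\|_{L^2}^2$, concluding by Gronwall.
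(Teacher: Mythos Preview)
The paper does not prove this theorem: it is quoted verbatim from \cite{DP1} (Danchin--Paicu) and used as a black box before the new estimates of Section~3 begin. So there is no ``paper's own proof'' to compare against here; what you have written is a sketch of what the argument in \cite{DP1} would need to contain.

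That said, your sketch has a real gap at the closure step. You repeatedly invoke $\|\nabla u\|_{L^\infty}$ to handle the commutators in the $H^1$ and $|\partial_1|^s$ estimates on $\theta$, and you propose to recover this Lipschitz bound via a logarithmic inequality of the form $\|\nabla u\|_{L^\infty}\lesssim 1+\|\omega\|_{L^\infty}\log(e+\|\omega\|_{H^s})$. But the hypotheses of the theorem give only $\omega_0\in L^2\cap L^\infty$; there is no $H^s$ regularity on the vorticity, and none is produced by the equation \eqref{vorticity2} (which has no dissipation). Hence that log-interpolation is unavailable, and in fact the velocity in this theorem is only log-Lipschitz, not Lipschitz --- the paper itself points this out in the remark following Theorem~\ref{theorem-model2}. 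The Lipschitz bound on $u$ is precisely the \emph{new} content of Theorem~\ref{theorem-model1} in this paper, obtained under the stronger striated/$H^s$-vorticity assumptions there, not something one has at the level of Theorem~\ref{result-DP-model1}.

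What \cite{DP1} actually does is work throughout with a log-Lipschitz velocity: the transport commutators on $\theta$ are estimated either via losing-regularity arguments of the type in Lemma~\ref{lemma-losing-transport}, or via paraproduct commutator bounds that tolerate $\nabla u\in\sqrt{L}$ (equivalently $\nabla u\in LL^{1/2}$) rather than $L^\infty$. The anisotropic embedding you wrote for $\|\partial_1\theta\|_{L^\infty}$ is indeed the key mechanism and your identification of the range $1<s<\tfrac32$ is correct, but the loop does not close through a genuine Lipschitz bound on $u$; it closes through these weaker log-Lipschitz-compatible estimates followed by a double-exponential Gronwall.
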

Then we give the proposition which showing the regularity estimate of $(\omega,\theta)$.
\begin{prop}
\label{prop-omega-theta}
Let $0<s<1$, assume the initial data $\omega_0\in L^2\cap L^\infty\cap H^s$ and $\theta_0\in H^{1+s}$. Then the following estimate holds true,
\begin{equation}
\begin{split}
\|\omega(t)\|_{H^s}^2+\|\theta(t)\|_{H^{1+s}}^2+\int_0^t\|\partial_1\theta(\tau)\|_{H^{1+s}}^2~d\tau\leq C(t)e^{\int_0^t\|\nabla u(\tau)\|_{L^\infty}~d\tau}.
\end{split}
\end{equation}
\end{prop}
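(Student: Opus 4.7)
The plan is to perform parallel Littlewood-Paley energy estimates in $H^s$ for the vorticity equation \eqref{vorticity2} and in $H^{1+s}$ for the temperature equation of \eqref{system1}, combine them, and close via a Gronwall argument, using the horizontal dissipation on $\theta$ to absorb cross-terms. Preliminary $L^2$-energy estimates on $u$ and $\theta$ from the standard method, together with an $L^p$-bound on $\omega$ coming from the transport--forcing structure of \eqref{vorticity2}, serve as the foundation and justify the manipulations on smooth approximants.

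For the $H^s$ estimate of $\omega$, apply $\Delta_q$ to \eqref{vorticity2} and pair with $\Delta_q\omega$ in $L^2$. The first inequality of Lemma \ref{lemA.1} dominates the convection by $Cb_q 2^{-2qs}\|\nabla u\|_{L^\infty}\|\omega\|_{H^s}^2$, while the forcing is handled by Cauchy-Schwarz. Multiplying by $2^{2qs}$, summing over $q$, and applying Young's inequality yields
\begin{equation*}
\frac{d}{dt}\|\omega\|_{H^s}^2 \le C(1+\|\nabla u\|_{L^\infty})\|\omega\|_{H^s}^2 + C\|\partial_1\theta\|_{H^s}^2.
\end{equation*}

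For the $H^{1+s}$ estimate of $\theta$, apply $\Delta_q$ to the temperature equation, pair with $\Delta_q\theta$, and weight the sum by $2^{2q(1+s)}$. The cancellation $\nabla\cdot u=0$ reduces the convection contribution to the commutator $-\langle[\Delta_q,u\cdot\nabla]\theta,\Delta_q\theta\rangle$, while the horizontal diffusion produces $\|\partial_1\theta\|_{H^{1+s}}^2$ on the left. The main technical step is bounding the summed commutator: Bony's paraproduct decomposition of $u\cdot\nabla\theta$, together with Lemma \ref{kato-ponce}, allows the derivatives to be redistributed so that the $u$-factor carries them (controlled by $\|\nabla u\|_{L^\infty}$ or by $\|\omega\|_{H^s}$ through the Biot-Savart map), while the $\theta$-factor is required only in a finite Lebesgue norm coming from the Sobolev embedding $H^s(\R^2)\hookrightarrow L^{2/(1-s)}$. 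After Young's inequality this yields
\begin{equation*}
\frac{d}{dt}\|\theta\|_{H^{1+s}}^2 + \|\partial_1\theta\|_{H^{1+s}}^2 \le C(1+\|\nabla u\|_{L^\infty})\|\theta\|_{H^{1+s}}^2 + C\|\omega\|_{H^s}^2.
\end{equation*}

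Adding the two inequalities and using the Bernstein-type bound $\|\partial_1\theta\|_{H^s}^2\le \|\theta\|_{H^{1+s}}^2$ to absorb the forcing on the $\omega$ side, Gronwall's inequality applied to $\|\omega\|_{H^s}^2+\|\theta\|_{H^{1+s}}^2$ delivers the claim; the horizontal dissipation integrates out on the left to produce $\int_0^t\|\partial_1\theta(\tau)\|_{H^{1+s}}^2\,d\tau$. The main obstacle is precisely the commutator bound at level $H^{1+s}$ for $\theta$: since $\nabla\theta\in H^s$ with $s<1$ fails to embed into $L^\infty$ in two dimensions, a naive application of Kato-Ponce would introduce an uncontrollable $\|\nabla\theta\|_{L^\infty}$ factor. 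Sidestepping it through Bony's decomposition and finite-Lebesgue Sobolev embedding, while keeping the resulting bound \emph{linear} in the coupled energy so that Gronwall applies, is the delicate point, and the allowance of $\|\nabla u\|_{L^\infty}$ on the right-hand side of the proposition is exactly what makes this routing feasible.
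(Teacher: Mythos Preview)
Your overall architecture matches the paper's, and the $\omega$-estimate is correct. The gap lies in the $H^{1+s}$ commutator for $\theta$: the route you sketch via Bony plus the embedding $H^s\hookrightarrow L^{2/(1-s)}$ does not close on the paraproduct piece $\sum_{|k-q|\le2}\Delta_q(\Delta_k u\cdot\nabla S_{k-1}\theta)$ (the paper's $\Theta_2$). Placing $\nabla S_{k-1}\theta$ in $L^{2/(1-s)}$ forces $\Delta_k u$ into $L^{2/s}$; Bernstein and Biot--Savart give $\|\Delta_k u\|_{L^{2/s}}\lesssim 2^{-2ks}\|\omega\|_{H^s}$, whereas you need decay $2^{-k(1+s)}$ to sum at level $H^{1+s}$, so the bound is off by a factor $2^{k(1-s)}$. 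The Kato--Ponce alternative would require $\||D|^{1+s}u\|_{L^{2/s}}$, which by Sobolev scales like $\|\nabla\omega\|_{L^2}$ and is unavailable since $\omega$ is only in $H^s$ with $s<1$. Hence your claimed differential inequality for $\theta$ with coefficient $C(1+\|\nabla u\|_{L^\infty})$ does not follow from the tools you cite; this is exactly the obstruction you flag, and the isotropic ``finite-Lebesgue'' routing does not actually sidestep it.

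The paper closes $\Theta_2$ by exploiting the anisotropy of the diffusion. It splits $u\cdot\nabla\theta=u^1\partial_1\theta+u^2\partial_2\theta$: for the first piece it spends the \emph{a priori} bound $\partial_1\theta\in L^2_t(L^\infty_x)$ from Theorem~\ref{result-DP-model1}, so that $\|\partial_1\theta\|_{L^\infty}$ appears in the Gronwall coefficient alongside $\|\nabla u\|_{L^\infty}$; for the second it writes $u^2=\partial_1\Delta^{-1}\omega$ via Biot--Savart and integrates the $\partial_1$ by parts onto $\Delta_q\theta$ or $S_{k-1}\theta$, producing terms controlled by $\|\partial_1\theta\|_{H^{1+s}}$ that are then absorbed into the dissipation. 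The resulting inequality carries the coefficient $(1+\|\nabla u\|_{L^\infty}+\|\partial_1\theta\|_{L^\infty})$, not $(1+\|\nabla u\|_{L^\infty})$, and the appeal to Theorem~\ref{result-DP-model1} is essential for Gronwall to close. Both the anisotropic splitting and the reliance on the prior $L^2_tL^\infty_x$ bound for $\partial_1\theta$ are missing from your proposal.
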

\begin{proof}
We first estimate $\omega$.
Applying $\Delta_q$ to \eqref{vorticity2}, we get
\begin{equation}
\partial_t\Delta_q\omega+\Delta_q(u\cdot\nabla\omega)=\partial_1\Delta_q\theta.
\end{equation}
Taking $L^2$ inner product with $\Delta_q\omega$, one can deduce
\begin{equation}
\label{omega-Hs}
\begin{split}
\frac12\frac{d}{dt}\|\Delta_q\omega(t)\|_{L^2}^2&=-\int_{\R^2}\Delta_q(u\cdot\nabla\omega)\Delta_q\omega~dx
+\int_{\R^2}\partial_1\Delta_q\theta\Delta_q\omega~dx\\
&\triangleq N_1+N_2.
\end{split}
\end{equation}
For $N_1$, making use of Lemma \ref{lemA.1},
\begin{equation}
\label{N_1}
\begin{split}
N_1\leq Cb_q2^{-2qs}\|\nabla u\|_{L^\infty}\|\omega\|_{H^s}^2.
\end{split}
\end{equation}
Then we estimate $N_2$, by H\"older inequality and Young's inequality,
\begin{equation}
\label{N_2}
\begin{split}
N_2\leq &C\|\partial_1\Delta_q\theta\|_{L^2}\|\Delta_q\omega\|_{L^2}
\leq Cb_q2^{-2qs}\|\partial_1\theta\|_{H^s}
\|\omega\|_{H^s}
\leq Cb_q2^{-2qs}(\|\theta\|_{H^{1+s}}^2+\|\omega\|_{H^s}^2).
\end{split}
\end{equation}
Inserting the estimate \eqref{N_1} and \eqref{N_2} into \eqref{omega-Hs}, then multiplying both side by $2^{2qs}$ and summing up over $q\geq-1$, we obtain
\begin{equation}
\label{omega-Hs-final}
\begin{split}
\frac12\frac{d}{dt}\|\omega(t)\|_{H^s}^2\leq C(1+\|\nabla u\|_{L^\infty})\times(\|\omega\|_{H^s}^2+\|\theta\|_{H^{1+s}}^2).
\end{split}
\end{equation}
Then we estimate $\theta$. Applying $\Delta_q$ to the second equation of \eqref{system1}, we obtain
\begin{equation}
\label{localiezd-theta}
\partial_t\Delta_q\theta+\Delta_q(u\cdot\nabla\theta)-\partial_1^2\Delta_q\theta=0.
\end{equation}
Multiplying \eqref{localiezd-theta}
by $\Delta_q \theta$ and integrating over $\R^2$ with respect to $x$, after integration by part, one can deduce
\begin{equation}
\label{theta-H1+s}
\begin{split}
\frac12\frac{d}{dt}\|\Delta_q\theta(t)\|_{L^2}^2+\|\partial_1\Delta_q\theta\|_{L^2}^2
&=-\int_{\R^2}\Delta_q(u\cdot\nabla\theta)\Delta_q\theta~dx\\
&=-\sum_{|k-q|\leq2}\int_{\R^2}\Delta_q(S_{k-1}u\cdot\nabla\Delta_k\theta)\Delta_q\theta~dx\\
&\quad-\sum_{|k-q|\leq2}\int_{\R^2}\Delta_q(\Delta_{k}u\cdot\nabla S_{k-1}\theta)\Delta_q\theta~dx\\
&\quad-\sum_{k\geq q-1}\sum_{|k-l|\leq1}\int_{\R^2}\Delta_q(\Delta_{k}u\cdot\nabla \Delta_{k+l}\theta)\Delta_q\theta~dx\\
&\triangleq \Theta_1+\Theta_2+\Theta_3.
\end{split}
\end{equation}
For $\Theta_1$, along the same method as in the proof of Lemma
\ref{lemA.1} which showed in the Appendix, we can obtain
\begin{equation}
\label{Theta1}
\begin{split}
\Theta_1\leq Cb_q2^{-2q(1+s)}\|\nabla u\|_{L^\infty}\|\theta\|_{H^{1+s}}^2.
\end{split}
\end{equation}
For $\Theta_2$, we can write it explicitly,
\begin{equation}
\label{Theta2}
\begin{split}
\Theta_2&=
-\sum_{|k-q|\leq2}\int_{\R^2}\Delta_q(\Delta_{k}u\cdot\nabla S_{k-1}\theta)\Delta_q\theta~dx\\&=-\sum_{|k-q|\leq2}\int_{\R^2}\Delta_q(\Delta_{k}u^1\partial_1 S_{k-1}\theta)\Delta_q\theta~dx\\
&\quad-\sum_{|k-q|\leq2}\int_{\R^2}\Delta_q(\Delta_{k}u^2\partial_2 S_{k-1}\theta)\Delta_q\theta~dx\\
&\triangleq \Theta_{21}+\Theta_{22}.
\end{split}
\end{equation}
Making use of H\"older inequality, $\Theta_{21}$ can be bounded by
\begin{equation*}
\begin{split}
\Theta_{21}&\leq C \sum_{|k-q|\leq2} \|\Delta_{k}u^1\|_{L^2}\|\partial_1\theta\|_{L^\infty}\|\Delta_{q}\theta\|_{L^2}\\
&\leq C \sum_{|k-q|\leq2} \|\Delta_{k}\partial_2\Delta^{-1}\omega\|_{L^2}\|\partial_1\theta\|_{L^\infty}\|\Delta_{q}\theta\|_{L^2}\\
&\leq C \sum_{|k-q|\leq2}2^{-k}2^{-s k}2^{s k} \|\Delta_{k}\omega\|_{L^2}\|\partial_1\theta\|_{L^\infty}2^{-{(1+s) k}}2^{{(1+s) k}}\|\Delta_{q}\theta\|_{L^2}\\
&\leq C 2^{-2(1+s)q}b_q\|\partial_1\theta\|_{L^\infty} \|\omega\|_{H^s}\|\theta\|_{H^{1+s}},
\end{split}
\end{equation*}
where we have used the Biot-Savart law \eqref{biot-savart}.\\
Also making use of \eqref{biot-savart}, combining with
integration by part, we can write $\Theta_{22}$ as
\begin{equation*}
\begin{split}
\Theta_{22}&=-\sum_{|k-q|\leq2}\int_{\R^2}\Delta_q(\Delta_{k}u^2\partial_2 \Delta_{k+l}\theta)\Delta_q\theta~dx\\
&=-\sum_{|k-q|\leq2}\int_{\R^2}\Delta_q(\Delta_{k}\partial_1\Delta^{-1}\omega\partial_2 S_{k-1}\theta)\Delta_q\theta~dx\\
&=\sum_{|k-q|\leq2}\int_{\R^2}\Delta_q(\Delta_{k}\Delta^{-1}\omega\partial_2 S_{k-1}\theta)\partial_1\Delta_q\theta~dx\\
&\quad+\sum_{|k-q|\leq2}\int_{\R^2}\Delta_q(\Delta_{k}\Delta^{-1}\omega\partial_1\partial_2 S_{k-1}\theta)\Delta_q\theta~dx\\
&\triangleq \Theta_{221}+\Theta_{222}.
\end{split}
\end{equation*}
For $\Theta_{221}$, by H\"older inequality and Bernstein inequality in Lemma \ref{bernstein},
\begin{equation*}
\begin{split}
\Theta_{221}&\leq C \sum_{|k-q|\leq2}  \|\Delta_{k}\Delta^{-1}\omega\|_{L^\infty}\|\partial_2S_{k-1}\theta\|_{L^2}\|\partial_1\Delta_{q}\theta\|_{L^2}\\
&\leq C \sum_{|k-q|\leq2}2^{-2k} \|\Delta_{k}\omega\|_{L^\infty}\|\theta\|_{H^1}2^{-{(1+s) q}}2^{{(1+s) q}}\|\Delta_{q}\theta\|_{L^2}\\
&\leq C \sum_{|k-q|\leq2}2^{-2k} 2^{k}\|\Delta_{k}\omega\|_{L^2}\|\theta\|_{H^1}2^{-{(1+s) q}}2^{{(1+s) q}}\|\partial_1\Delta_{q}\theta\|_{L^2}\\
&\leq C 2^{-2(1+s)q}b_q\|\theta\|_{H^1} \|\omega\|_{H^s}\|\partial_1\theta\|_{H^{1+s}}.
\end{split}
\end{equation*}
Next we bound $\Theta_{222}$, by H\"older inequality and Bernstein inequality,
\begin{equation*}
\begin{split}
\Theta_{222}&\leq C \sum_{|k-q|\leq2}  \|\Delta_{k}\Delta^{-1}\omega\|_{L^2}\|\partial_1\partial_2S_{k-1}\theta\|_{L^\infty}\|\Delta_{q}\theta\|_{L^2}\\
&\leq C \sum_{|k-q|\leq2}2^{-2k} \|\Delta_{k}\omega\|_{L^2}\bigg(\sum_{k'\leq k-2}\|\partial_1\partial_2\Delta_{k'}\theta\|_{L^\infty}\bigg)\|\Delta_{q}\theta\|_{L^2}\\
&\leq C \sum_{|k-q|\leq2}2^{-2k} \|\Delta_{k}\omega\|_{L^2}2^{(1-s)q}\bigg(\sum_{k'\leq k-2}2^{(k'-k)(1-s)}2^{sk'}\|\partial_1\partial_2\Delta_{k'}\theta\|_{L^2}\bigg)\|\Delta_{q}\theta\|_{L^2}\\
&\leq C 2^{-2(1+s)q}b_q\|\omega\|_{L^2} \|\theta\|_{H^s}\|\partial_1\theta\|_{H^{1+s}},
\end{split}
\end{equation*}
where we have used the discrete
Young's inequality in the last step.\\
Then inserting the estimates of $\Theta_{21}$, $\Theta_{221}$ and $\Theta_{222}$ into \eqref{Theta2},
one can obtain
\begin{equation}
\label{Theta2-final}
\begin{split}
\Theta_{2}&\leq C  2^{-2(1+s)q}b_q\|\partial_1\theta\|_{L^\infty} \|\omega\|_{H^s}\|\theta\|_{H^{1+s}}\\
&\quad+ C 2^{-2(1+s)q}b_q(\|\omega\|_{H^s}+ \|\theta\|_{H^{1+s}})\|\partial_1\theta\|_{H^{1+s}}.\\
\end{split}
\end{equation}
Finally we estimate $\Theta_3$, by H\"older inequality and Bernstein inequality,
\begin{equation}
\label{Theta3}
\begin{split}
\Theta_{3}&=-\sum_{k\geq q-1}\sum_{|k-l|\leq1}
\int_{\R^2}\Delta_q\nabla\cdot(\Delta_ku\Delta_l\theta)\Delta_q\theta~dx\\
&\leq C \sum_{k\geq q-1}2^q\|\Delta_k u\|_{L^\infty}\|\Delta_k \theta\|_{L^2}\|\Delta_q \theta\|_{L^2}\\
&\leq C \sum_{k\geq q-1,~k\geq 0}2^{q-k}\|\Delta_k\nabla u\|_{L^\infty}\|\Delta_k \theta\|_{L^2}\|\Delta_q \theta\|_{L^2}+
C\|\Delta_{-1}u\|_{L^\infty}\|\Delta_{-1}\theta\|_{L^2}^2\\
&\leq C  2^{-2(1+s)q}b_q(1+\|\nabla u\|_{L^\infty}) \|\theta\|_{H^{1+s}}^2.
\end{split}
\end{equation}
Inserting the estimates \eqref{Theta1}, \eqref{Theta2-final} and \eqref{Theta3} into \eqref{theta-H1+s}, and making use of Young's inequality, we obtain
\begin{equation*}
\begin{split}
\frac12\frac{d}{dt}\|\Delta_q\theta(t)\|_{L^2}^2+\|\partial_1\Delta_q\theta\|_{L^2}^2
\leq &\quad C2^{-2(1+s)q}(1+\|\nabla u\|_{L^\infty}+\|\partial_1\theta\|_{L^\infty})\times(\|\omega\|_{H^s}^2+\|\theta\|_{H^{1+s}}^2)\\
&+\frac{\varepsilon}{\|b_q\|_{\ell^1}}b_q2^{-2(1+s)q}\|\partial_1\theta\|_{H^{1+s}}.
\end{split}
\end{equation*}
Multiplying both sides by $2^{(1+s)q}$ and summing up from $-1$ to $\infty$ with respect to $q$, choosing $\varepsilon=\frac12$, one can deduce
\begin{equation}
\label{theta-H1+s-final}
\begin{split}
\frac{d}{dt}\|\theta(t)\|_{H^{1+s}}^2+\|\partial_1\theta\|_{H^{1+s}}^2
\leq  C(1+\|\nabla u\|_{L^\infty}+\|\partial_1\theta\|_{L^\infty})\times(\|\omega\|_{H^s}^2+\|\theta\|_{H^{1+s}}^2).
\end{split}
\end{equation}
Combining \eqref{omega-Hs-final} with \eqref{theta-H1+s-final} and by Gr\"onwall's Lemma, because $\partial_1\theta\in L^2_{t}(L^\infty_{x})$~(see Theorem \ref{result-DP-model1}), we obtain
\begin{equation*}
\begin{split}
\|\omega(t)\|_{H^s}^2+\|\theta(t)\|_{H^{1+s}}^2+\int_0^t|\partial_1\theta(\tau)\|_{H^{1+s}}^2~d\tau\leq C(t)e^{\int_0^t\|\nabla u(\tau)\|_{L^\infty}~d\tau},
\end{split}
\end{equation*}
which complete the proof of this proposition.
\end{proof}

\begin{subsection}{A priori estimates for the striated regularity}
~\\
In this subsection, we will give the estimates of
tangential derivatives of $\omega$ and the regularity estimates of $X$. The
first lemma gives $L^p$ $(p\in[1,\infty])$ estimate
of $X$.
\begin{lemma}
	\label{L-infty-X}
	Let $r\in[1,\infty]$, $X_0\in L^r$ and $(\omega_0,\theta_0)$ satisfies the assumption in Lemma \ref{lipschitz-information}. Then the solution $X$ of equation \eqref{X-eq} satisfies
	\begin{equation}
	\label{X-Lr-estimate}
	\|X_0\|_{L^r}e^{-\int_0^t\|\nabla u(\tau)\|_{L^\infty}d\tau}\leq\|X(t)\|_{L^r}\leq
	\|X_0\|_{L^r}e^{\int_0^t\|\nabla u(\tau)\|_{L^\infty}d\tau}.
	\end{equation}
\end{lemma}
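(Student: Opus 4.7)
The plan is to argue by the method of characteristics, exploiting both the transport-stretching structure of \eqref{X-eq} and the fact that $u$ is divergence-free and (by the hypotheses borrowed from Lemma \ref{lipschitz-information}) Lipschitz in $x$, so that the flow $\psi(t,\cdot)$ is a globally defined, measure-preserving $C^1$-diffeomorphism of $\R^2$. Along this flow, set $Y(t)\triangleq X(\psi(t,x),t)$; from \eqref{X-eq} and the definition of the material derivative one reads off the ODE
\begin{equation*}
\frac{d}{dt}Y(t)=\bigl(Y(t)\cdot\nabla u\bigr)(\psi(t,x),t),
\end{equation*}
which is purely pointwise in $x$.

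The next step is to derive a two-sided pointwise bound on $|Y|$. Computing $\frac{d}{dt}|Y|^2=2Y_iY_j(\pa_j u_i)(\psi(t,x),t)$ and using $|Y_iY_j\pa_j u_i|\leq \|\nabla u(t)\|_{L^\infty}|Y|^2$, one obtains
\begin{equation*}
-2\|\nabla u(t)\|_{L^\infty}|Y|^2\leq \frac{d}{dt}|Y|^2\leq 2\|\nabla u(t)\|_{L^\infty}|Y|^2,
\end{equation*}
and Gr\"onwall's lemma (applied in both directions) yields
\begin{equation*}
|X_0(x)|\,e^{-\int_0^t\|\nabla u(\tau)\|_{L^\infty}d\tau}\leq |X(\psi(t,x),t)|\leq |X_0(x)|\,e^{\int_0^t\|\nabla u(\tau)\|_{L^\infty}d\tau}.
\end{equation*}

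To upgrade this to an $L^r$ estimate, I would invoke the fact that $\nabla\cdot u=0$ together with Liouville's formula $\det D\psi(t,x)\equiv 1$, so that $\psi(t,\cdot)$ preserves Lebesgue measure. For $r\in[1,\infty)$ the change of variables $y=\psi(t,x)$ gives
\begin{equation*}
\|X(t)\|_{L^r}^r=\int_{\R^2}|X(y,t)|^r\,dy=\int_{\R^2}|X(\psi(t,x),t)|^r\,dx,
\end{equation*}
into which the pointwise bound above inserts directly; taking $r$-th roots on both sides gives the two desired inequalities. For $r=\infty$ the same pointwise bound, combined with the surjectivity of $\psi(t,\cdot)$ onto $\R^2$, gives the result by passing to the supremum in $x$.

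There is no genuine analytic obstacle here; the only point that requires any care is the justification that $\psi(t,\cdot)$ is well-defined globally in time and exactly volume-preserving, which is why the lemma is stated under the hypotheses of Lemma \ref{lipschitz-information}: those hypotheses provide $\nabla u\in L^1_t(L^\infty_x)$, which is precisely what is needed for the Cauchy--Lipschitz flow theory and for the Jacobian $\det D\psi$ to remain identically equal to one.
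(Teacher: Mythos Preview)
Your proof is correct and in fact gives a bit more than what is stated: the pointwise two-sided bound on $|X(\psi(t,x),t)|$ is a stronger conclusion than the $L^r$ estimate alone. The approach, however, is genuinely different from the paper's. The paper argues in Eulerian form: for $1<r<\infty$ it multiplies \eqref{X-eq} by $|X|^{r-2}X$, integrates over $\R^2$, and uses H\"older to obtain $\frac{1}{r}\frac{d}{dt}\|X\|_{L^r}^r\leq C\|\nabla u\|_{L^\infty}\|X\|_{L^r}^r$; Gr\"onwall then gives the upper bound, the lower bound is obtained by invoking the time-reversibility of the transport equation, and $r=\infty$ is reached by letting $r\to\infty$. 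Your Lagrangian route treats all $r\in[1,\infty]$ on an equal footing (in particular, the endpoint $r=1$ is handled cleanly, whereas the multiplier $|X|^{r-2}X$ becomes singular there), and it makes transparent the geometric reason for the estimate, at the modest cost of having to invoke the measure-preserving property of the flow. Both arguments rely on the same ingredient, namely $\nabla u\in L^1_t(L^\infty_x)$, so neither is cheaper in terms of hypotheses.
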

\begin{proof}
Multiplying both side of equation \eqref{X-eq} by  $|X|^{r-2}X~(1<r<\infty)$ and integrating over $\R^2$
with respect to $x$, we can obtain
\begin{equation}
\frac1r\frac{d}{dt}\|X(t)\|_{L^r}^r\leq C\|\nabla u\|_{L^\infty}\|X\|_{L^r}^{r},
\end{equation}
which implies the right hand side inequality of \eqref{X-Lr-estimate}.
Using the time reversibility of this equation and the same $L^r$ estimate, we can obtain the first inequality of 
\eqref{X-Lr-estimate}. Then taking $r\rightarrow \infty$, we can deduct the result for the case $r=\infty$, which complete the proof of this lemma.

\end{proof}

 Applying $\partial_X$ to the vorticity equation, according to \eqref{properties-partialX}, we get
$\partial_X\omega$ satisfies the following equation
\begin{equation}
\label{paX-omega-eq}
\partial_t\partial_X\omega+u\cdot\nabla\partial_X\omega=\partial_X(\partial_1\theta)=X\cdot\nabla\partial_1\theta.
\end{equation}
The next lemma deals with the $L^p$ estimate of $\partial_X\omega$.
\begin{lemma}
\label{paX-omega}
Let $\partial_{X_0}\omega_0$ $\in L^p~(2\leq p<\infty)$, and $(\omega,\theta)$ satisfies the assumptions in Proposition \ref{prop-omega-theta}, then we have
\begin{equation*}
\begin{split}
\|\partial_X\omega(t)\|_{L^p}\leq \|\partial_{X_0}\omega_0\|_{L^p}+
C(t)e^{2\int_0^t\|\nabla u(\tau)\|_{L^\infty}~d\tau}.
\end{split}
\end{equation*}
\end{lemma}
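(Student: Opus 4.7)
The plan is to run a direct $L^p$ energy estimate on the transport equation \eqref{paX-omega-eq} for $\pa_X\omega$. The key structural observation is that the commutator identity \eqref{properties-partialX} eliminates any awkward commutator term: $\pa_X\omega$ is transported along $u$ with the clean forcing $X\cdot\nabla\pa_1\theta$, so no paraproduct decomposition is needed. Concretely, I would multiply \eqref{paX-omega-eq} by $|\pa_X\omega|^{p-2}\pa_X\omega$ and integrate over $\R^2$; the convection term vanishes thanks to $\nabla\cdot u=0$, so after H\"older and division by $\|\pa_X\omega\|_{L^p}^{p-1}$ one gets
\begin{equation*}
\frac{d}{dt}\|\pa_X\omega(t)\|_{L^p}\leq \|X(t)\|_{L^\infty}\,\|\nabla\pa_1\theta(t)\|_{L^p},
\end{equation*}
which I would then integrate in time starting from $\|\pa_{X_0}\omega_0\|_{L^p}$.

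Next I would bound the two factors of the integrand separately. For the transported vector field, Lemma \ref{L-infty-X} with $r=\infty$ yields $\|X(\tau)\|_{L^\infty}\leq \|X_0\|_{L^\infty}e^{\int_0^\tau\|\nabla u(s)\|_{L^\infty}\,ds}$. For the source term, the two-dimensional Sobolev embedding $H^s\hookrightarrow L^p$ gives $\|\nabla\pa_1\theta\|_{L^p}\lesssim \|\pa_1\theta\|_{H^{1+s}}$, and Proposition \ref{prop-omega-theta} furnishes the square-integrable-in-time bound $\int_0^t\|\pa_1\theta(\tau)\|_{H^{1+s}}^2\,d\tau\leq C(t)e^{\int_0^t\|\nabla u(\tau)\|_{L^\infty}\,d\tau}$.

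Finally, Cauchy-Schwarz in time gives
\begin{equation*}
\int_0^t\|X(\tau)\|_{L^\infty}\|\nabla\pa_1\theta(\tau)\|_{L^p}\,d\tau\leq\bigg(\int_0^t\|X(\tau)\|_{L^\infty}^2\,d\tau\bigg)^{1/2}\bigg(\int_0^t\|\nabla\pa_1\theta(\tau)\|_{L^p}^2\,d\tau\bigg)^{1/2},
\end{equation*}
in which the $X$-factor is bounded by $t^{1/2}\|X_0\|_{L^\infty}e^{\int_0^t\|\nabla u\|_{L^\infty}\,ds}$ and the temperature factor by $C(t)^{1/2}e^{\frac12\int_0^t\|\nabla u\|_{L^\infty}\,ds}$, so their product is controlled by $C(t)e^{2\int_0^t\|\nabla u(\tau)\|_{L^\infty}\,d\tau}$, yielding the claimed estimate with ample slack in the exponent. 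The only delicate point is the $L^p$-control of $\nabla\pa_1\theta$ when $p$ is so large that the embedding $H^s\hookrightarrow L^p$ fails; in that regime one would either exploit the stronger horizontal regularity $|\pa_1|^{1+s}\theta\in L^2$ via an anisotropic interpolation, or strengthen the initial regularity of $\theta$. Under the assumptions at hand, however, the chain above closes directly.
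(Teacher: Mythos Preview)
Your proposal is correct and follows essentially the same route as the paper: the paper also multiplies \eqref{paX-omega-eq} by $|\pa_X\omega|^{p-2}\pa_X\omega$, uses $\nabla\cdot u=0$ to kill the transport term, applies H\"older to obtain $\frac{d}{dt}\|\pa_X\omega\|_{L^p}\leq \|X\|_{L^\infty}\|\nabla\pa_1\theta\|_{L^p}$, invokes the embedding $H^s\hookrightarrow L^p$ (with $\tfrac{2}{p}=1-s$) together with Proposition~\ref{prop-omega-theta} for the source, and Lemma~\ref{L-infty-X} for $\|X\|_{L^\infty}$. The only cosmetic difference is that the paper pulls $\sup_\tau\|X(\tau)\|_{L^\infty}$ outside the time integral before applying Cauchy--Schwarz, whereas you apply Cauchy--Schwarz directly to the product; both give the stated bound. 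Your caveat about the embedding failing for $p>\tfrac{2}{1-s}$ is well taken---the paper silently ties $p$ and $s$ via $\tfrac{2}{p}=1-s$ rather than treating them as independent.
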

\begin{proof}
Multiplying the equation \eqref{paX-omega-eq} by $|\partial_X\omega|^{p-2}\partial_X\omega$~$(2\leq p<\infty)$, and integrating over $\R^2$ with respect to $x$, because $u$ satisfies the divergence-free condition, by H\"older inequality,
\begin{equation*}
\frac{1}{p}\frac{d}{dt}\|\partial_X\omega(t)\|_{L^p}^p\leq \|X\|_{L^\infty}\|\partial_1\nabla\theta\|_{L^p}\|\partial_X\omega\|_{L^p}^{p-1}.
\end{equation*}
Because of the embedding $H^s\hookrightarrow L^p$ with $\frac{2}{p}=1-s$, we obtain
\begin{equation*}
\frac{d}{dt}\|\partial_X\omega(t)\|_{L^p}\leq \|X\|_{L^\infty}\|\partial_1\nabla\theta\|_{H^s}.
\end{equation*}
Then integrating in time and combining with the result of Proposition \ref{prop-omega-theta},
\begin{equation*}
\begin{split}
\|\partial_X\omega(t)\|_{L^p}&\leq \|\partial_{X_0}\omega_0\|_{L^p}+\int_0^t \|X(\tau)\|_{L^\infty}\|\partial_1\nabla\theta(\tau)\|_{H^s}~d\tau\\
&\leq \|\partial_{X_0}\omega_0\|_{L^p}+ \|X\|_{L^\infty_{t,x}}\int_0^t\|\partial_1\nabla\theta(\tau)\|_{H^s}~d\tau\\
&\leq \|\partial_{X_0}\omega_0\|_{L^p}+
C(t)e^{2\int_0^t\|\nabla u(\tau)\|_{L^\infty}~d\tau},
\end{split}
\end{equation*}
which complete the proof of this lemma.
\end{proof}

Then we give the H\"older estimate for $X$. The next proposition obtain the Lipschitz information of the velocity $u$ and the $C^s$
norm of $X$ simultaneously.
\begin{prop}
Let $0<s<1$, assume $X_0\in C^s$, $\partial_{X_0}\omega_0\in L^p$ and $(\omega_0,\theta_0)$
satisfies the assumptions in Proposition \ref{prop-omega-theta}, the we have the velocity u satisfies
\begin{equation}
\nabla u\in L^1([0,t];L^\infty).
\end{equation}
Moreover,
\begin{equation}
\begin{split}
X\in L^\infty([0,t];C^s),~\omega\in L^\infty([0,t];&H^s),~\partial_X\omega\in L^\infty([0,t];L^p).\\
\theta\in L^\infty([0,t];H^{1+s})&, ~
\partial_1\theta\in L^2([0,t];H^{1+s}).
\end{split}
\end{equation}

\end{prop}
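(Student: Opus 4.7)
The strategy is a Chemin-type bootstrap: use the logarithmic inequality of Lemma \ref{log-ineq-lemma} to control $\|\nabla u\|_{L^\infty}$ by the striated H\"older norm $\|\omega\|_{C^s_X}$, show that $\|\omega\|_{C^s_X}$ grows at most exponentially in $\int_0^t\|\nabla u(\tau)\|_{L^\infty}d\tau$, and close via Gronwall. The logarithm is essential: it transforms exponential growth into a linear one, so that the resulting inequality for the integral of $\|\nabla u\|_{L^\infty}$ can be absorbed.

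First I would gather the pieces that are already available. The $L^\infty$ bound on $\omega$ follows directly from integrating \eqref{vorticity2} along the flow and using $\partial_1\theta\in L^2_{\mathrm{loc}}L^\infty_x$ (from Theorem \ref{result-DP-model1}):
\begin{equation*}
\|\omega(t)\|_{L^\infty}\leq \|\omega_0\|_{L^\infty}+\int_0^t\|\partial_1\theta(\tau)\|_{L^\infty}\,d\tau\leq C(t).
\end{equation*}
The $L^\infty$ norm of $X$ is controlled by Lemma \ref{L-infty-X}, and the $L^p$ norm of $\partial_X\omega$ by Lemma \ref{paX-omega}, both modulo the quantity $\int_0^t\|\nabla u\|_{L^\infty}d\tau$. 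Next, applying Lemma \ref{holder-transport} to equation \eqref{X-eq} with source $\partial_X u$ yields
\begin{equation*}
\|X(t)\|_{C^s}\leq \|X_0\|_{C^s}e^{C\int_0^t\|\nabla u\|_{L^\infty}d\tau}+C\int_0^t\|\partial_X u(\tau)\|_{C^s}\,e^{C\int_\tau^t\|\nabla u\|_{L^\infty}ds}\,d\tau.
\end{equation*}
The essential striated ingredient is an auxiliary estimate bounding $\|\partial_X u\|_{C^s}$ by $\|\nabla u\|_{L^\infty}\|X\|_{C^s}$ plus terms depending only on $\|\omega\|_{L^\infty}$ and $\|\partial_X\omega\|_{L^p}$ through the Biot-Savart law; Gronwall then gives $\|X(t)\|_{C^s}\leq C(t)e^{C\int_0^t\|\nabla u\|_{L^\infty}d\tau}$.

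Unwinding the definition of $\|\cdot\|_{C^s_X}$ by means of $\nabla\cdot(X\omega)=\partial_X\omega+\omega\,\mathrm{div}\,X$ and the estimates just obtained, one derives $\|\omega(t)\|_{C^s_X}\leq C(t)e^{C\int_0^t\|\nabla u\|_{L^\infty}d\tau}$. Inserting this into Lemma \ref{log-ineq-lemma} produces
\begin{equation*}
\|\nabla u(t)\|_{L^\infty}\leq C(t)\Bigl(1+\int_0^t\|\nabla u(\tau)\|_{L^\infty}\,d\tau\Bigr),
\end{equation*}
and Gronwall closes the loop, so $\nabla u\in L^1_{\mathrm{loc}}([0,T];L^\infty)$. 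Feeding this back, the remaining regularity claims follow: $\omega\in L^\infty_tH^s$ and $\theta\in L^\infty_tH^{1+s}$ with $\partial_1\theta\in L^2_tH^{1+s}$ from Proposition \ref{prop-omega-theta}; $X\in L^\infty_tC^s$ and $\partial_X\omega\in L^\infty_tL^p$ from the exponential bounds that are now finite.

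The main obstacle is the H\"older-level estimate for $\partial_X u = X\cdot\nabla u$ used in the bootstrap for $\|X\|_{C^s}$: one cannot afford the $\|\nabla u\|_{C^s}$ factor that a naive product inequality would produce, and instead one must use the paradifferential decomposition together with the Biot-Savart law to re-express $\partial_X u$ via $\omega$, so that the resulting $C^s$ bound involves only $\|\omega\|_{C^s_X}$, $\|\omega\|_{L^\infty}$ and $\|X\|_{C^s}$. This is the heart of Chemin's striated-regularity machinery, and it is the only non-routine piece of the proof; once it is in hand, everything else is a direct combination of Lemmas \ref{L-infty-X}, \ref{paX-omega}, \ref{holder-transport} and \ref{log-ineq-lemma} with a Gronwall closure.
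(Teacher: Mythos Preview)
Your proposal is correct and follows essentially the same route as the paper: apply Lemma~\ref{holder-transport} to \eqref{X-eq}, invoke the Chemin-type estimate $\|\partial_X u\|_{C^s}\leq C(\|\nabla u\|_{L^\infty}\|X\|_{C^s}+\|\partial_X\omega\|_{C^{s-1}})$ (the paper's \eqref{paXu}, with $\|\partial_X\omega\|_{C^{s-1}}\leq C\|\partial_X\omega\|_{L^p}$ via embedding), Gronwall to get $\|X(t)\|_{C^s}\leq C(t)e^{C\int_0^t\|\nabla u\|_{L^\infty}}$, then feed $\|\omega\|_{C^s_X}$ into the logarithmic inequality \eqref{log-ineq} and close with a second Gronwall. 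The only cosmetic difference is that the paper packages the first Gronwall via the auxiliary function $F(t)=\|X(t)\|_{C^s}e^{-\tilde C\int_0^t\|\nabla u\|_{L^\infty}}$, but the substance is identical.
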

\begin{proof}
Firstly, we compute the H\"older estimate of $X$. Applying Lemma \ref{holder-transport} to \eqref{X-eq}, we obtain
\begin{equation}
\label{X-Cs}
\begin{split}
\|X(t)\|_{C^s}&\leq C\|X_0\|_{C^s}e^{\tilde{C}\int_0^t\|\nabla u(\tau)\|_{L^\infty}~d\tau}+C\int_0^t\|\partial_X u(\tau)\|_{C^s}e^{\tilde{C}\int_{\tau}^t\|\nabla u(s)\|_{L^\infty}~ds}~d\tau\\
&\leq Ce^{\tilde{C}\int_0^t\|\nabla u(\tau)\|_{L^\infty}~d\tau}(\|X_0\|_{C^s}+\int_0^t\|\partial_X u(\tau)\|_{C^s}e^{-\tilde{C}\int_0^{\tau}\|\nabla u(s)\|_{L^\infty}~ds}~d\tau),
\end{split}
\end{equation}
where we can choose $\tilde{C}>2$.
In order to estimate H\"older norm of $\partial_Xu$, we need the following estimate which proof can be found in \cite{Chemin-perfect,BCD},
\begin{equation}
\label{paXu}
\begin{split}
\|\partial_Xu\|_{C^s}\leq C(\|\nabla u\|_{L^\infty}\|X\|_{C^s}+\|\partial_X\omega\|_{C^{s-1}}).
\end{split}
\end{equation}
By Sobolev embedding $L^p\hookrightarrow C^{s-1}$~$(1-s=\frac2p)$ and Lemma \ref{paX-omega}, we obtain
\begin{equation}
\label{paX-omega-Cs-1}
\begin{split}
\|\partial_X\omega\|_{C^{s-1}}\leq C \|\partial_X\omega\|_{L^p}\leq C\|\partial_X\omega_0\|_{L^p}+C(t)e^{2\int_{0}^{t}\|\nabla u(\tau)\|_{L^\infty}~d\tau}.
\end{split}
\end{equation}
Inserting \eqref{paXu} and \eqref{paX-omega-Cs-1} into \eqref{X-Cs}, one can deduce that
\begin{equation*}
\begin{split}
\|X(t)\|_{C^s}&\leq Ce^{\tilde{C}\int_0^t\|\nabla u(\tau)\|_{L^\infty}~d\tau}\bigg(\|X_0\|_{C^s}+\int_0^t(C(\tau)\\&\quad\quad+\|\nabla u(\tau)\|_{L^\infty}\|X(\tau)\|_{C^s}e^{-\tilde{C}\int_0^{\tau}\|\nabla u(s)\|_{L^\infty}~ds})~d\tau\bigg).
\end{split}
\end{equation*}
Denoting
\begin{equation*}
\begin{split}
F(t)\triangleq \|X(t)\|_{C^s}e^{\tilde{C}\int_0^t\|\nabla u(\tau)\|_{L^\infty}~d\tau}.
\end{split}
\end{equation*}
Then according to the above estimates, we obtain
\begin{equation*}
\begin{split}
F(t)\leq CF(0)+\int_0^tC(\tau)(\|\nabla u(\tau)\|_{L^\infty}+1)(F(\tau)+1)~d\tau.
\end{split}
\end{equation*}
By Gr\"onwall's Lemma,
\begin{equation*}
\begin{split}
F(t)\leq C(F(0)+1)e^{\int_0^tC(\tau)(\|\nabla u(\tau)\|_{L^\infty}+1)~d\tau}.
\end{split}
\end{equation*}
According to the definition of $F(t)$, we obtain the H\"older estimate of $X$ that,
\begin{equation}
\label{X-Cs-final}
\begin{split}
\|X(t)\|_{C^s}\leq C(t)e^{C\int_0^t\|\nabla u(\tau)\|_{L^\infty}~d\tau}.
\end{split}
\end{equation}
Recalling the logarithmic inequality in Lemma \ref{log-ineq-lemma} that
\begin{equation}
\label{log-ineq-omega}
\|\nabla v\|_{L^\infty}\leq
C\bigg(\|\omega\|_{L^2}+\|\omega\|_{L^\infty}\log\bigg(e+\frac{\|\omega\|_{C^s_X}}{\|\omega\|_{L^\infty}}\bigg)\bigg),
\end{equation}
where $\|\omega\|_{C^s_X}$ is defined in Definition \ref{definition2.1}.\\
Because $\|\omega\|_{L^2\cap L^\infty}$ is bounded, inserting the estimates \eqref{paX-omega-Cs-1}, \eqref{X-Cs-final}
into \eqref{log-ineq-omega}, we obtain
\begin{equation*}
\begin{split}
\|\nabla v\|_{L^\infty}&\leq
C\bigg(1+\log\bigg(e+C(t)e^{C\int_0^t\|\nabla u(\tau)\|_{L^\infty}}\bigg)\bigg)\\
&\leq
C\bigg(1+\int_0^tC(t)(1+\|\nabla u(\tau)\|_{L^\infty})~d\tau\bigg).
\end{split}
\end{equation*}
Then by Gr\"onwall's Lemma,
\begin{equation}
\label{lipschitz-norm-u}
\|\nabla u(t)\|_{L^\infty}\leq C(t),~~~~\forall t>0.
\end{equation}
Combining the estimates \eqref{X-Cs-final} and \eqref{lipschitz-norm-u}, we can obtain the
desired H\"older norm of $X$, Then inserting the estimate \eqref{X-Cs-final} into Proposition
\ref{prop-omega-theta} and Lemma \ref{paX-omega}, we can complete the proof of this proposition.
\end{proof}
\subsection{The vortex patch problem}
In this subsection, we devote to prove Corollary \ref{cor-1}, which solving the vortex patch problem. Because
\begin{equation*}
\omega_0=\chi_{D_0}(x)\triangleq\left\{\begin{split}
1\quad x\in D_0,\\
0\quad x\notin D_0,
\end{split}\right.
\end{equation*}
where $D_0$ is a connected bounded domain with $\pa D_0\in C^{1+s}$ for $0<s<1$. Then according to Definition 
\ref{Cs-class}, there exist a real function $f_0\in C^{1+s}$ and a neighborhood $V_0$ such that $\pa D_0=V_0\cap f^{-1}({0})$ and $\nabla f_0\neq 0$ on $V_0$. Noticing that at time $t$, the boundary $\pa D_t=\psi(D_0,t)$ is the level set of the function $f(\cdot,t)=f_0(\psi^{-1}(\cdot,t))$ with $f$ being transported by the flow $\psi$:
\begin{equation}
\label{f-eq}
\left \{\begin{split} &\pa_t f+u\cdot \nabla f=0,\\
&f(x,0)=f_0(x).
\end{split}
\right.
\end{equation} 

Setting the vector field  $X\triangleq\nabla^{\perp}f$ with initial data  $X_0\triangleq\nabla^{\perp}f_0$, 
it is not hard to verifies that $X$ satisfying \eqref{X-defin} and the corresponding system $\eqref{X-eq}$. 
Then we can parametrize $\pa D_0$ as
\begin{equation*}
\gamma_0:\mathbb{S}^1\rightarrow\pa D_0,~~\text{via}~~\sigma\mapsto\gamma_0(\sigma),
\end{equation*}
with
\begin{equation}
\label{gamma0}
\left \{\begin{split} &\pa_{\sigma} \gamma_0=X_0(\gamma_0(\sigma)),~~\forall~\sigma\in\mathbb{S}^1,\\
&\gamma_0(0)=x_0\in\pa D_0.
\end{split}
\right.
\end{equation} 
In order to conclude the proof of Corollary \ref{cor-1}, we observe that a parametrization for $\pa D_t$ is given by $\gamma_t(\sigma)\triangleq \psi(\gamma_0(\sigma),t)$ and by differentiating with respect to the parameter $\sigma$, we get
\begin{equation}
\label{gamma-t}
\left \{\begin{split} &\pa_{\sigma} \gamma_t(\sigma)=X(\gamma_t(\sigma)),~~\forall~\sigma\in\mathbb{S}^1,\\
&\gamma_t(0)=\psi(x_0,t)\in\pa D_t.
\end{split}
\right.
\end{equation} 
According to Theorem \ref{theorem-model1}, $X\in L^\infty([0,T];C^s)$, thus $\gamma_t\in C^{1+s}(\mathbb{S}^1)$ for all $t\geq0$.
This completes the proof of Corollary \ref{cor-1}.

\end{subsection}

\end{section}

\begin{section}{The Case of Horizontal Viscosity}
In this section, we focus on system \eqref{system2}. Before we begin to prove the result in Theorem \ref{theorem-model2}, we need to review the following existence and uniqueness result for system \eqref{system2} which can be found in \cite{DP1}.
\begin{theorem}
	\label{result-DP-model2}
Let $s\in(\frac12,1]$. For all function $\theta_0\in H^s\cap L^\infty$ and divergence-free vector field $u_0\in H^1$ with vorticity $\omega_0\in\sqrt L$. System \eqref{system2} with data $(u_0,\theta_0)$ admits a unique global solution $(u,\theta)$ such that $\theta\in C_{w}(\R_+;L^\infty)\cap C(\R_+;H^{s-\varepsilon})$ for all $\varepsilon>0$ and
\begin{equation}
u\in C_{w}(\R_+;H^1),~~~~\omega\in L^\infty_{loc}(\R_+;\sqrt L)~~and~~\nabla u \in L^\infty_{loc}(\R_+;\sqrt L).
\end{equation}
\end{theorem}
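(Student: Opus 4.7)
The plan is to build existence through a smoothing/compactness procedure that keeps the vorticity uniformly bounded in $\sqrt L$, and then to prove uniqueness separately via a losing-regularity estimate compensating for the fact that $\nabla u$ lies only in $\sqrt L\hookrightarrow LL^{\frac12}$, not in $L^1_tL^\infty$. The two quantitative inputs are propagation of the $\sqrt L$-norm of $\omega$ and the transfer of this bound to $\nabla u$ via Biot--Savart.

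First I would collect the obvious a priori bounds. Since $\theta$ is transported by a divergence-free vector field, every Lebesgue norm $\|\theta(t)\|_{L^p}$ is preserved, while a standard $L^2$ energy estimate on $u$ combined with Cauchy--Schwarz on the buoyancy term yields $u\in L^\infty_{loc}(\R_+;L^2)$ and $\partial_1u\in L^2_{loc}(\R_+;L^2)$. For the vorticity, I would test \eqref{vorticity1} against $|\omega|^{p-2}\omega$ for finite $p\geq 2$, integrate by parts the source $\partial_1\theta$, and apply Young's inequality to absorb half of the horizontal dissipation, arriving at
\begin{equation*}
\frac{d}{dt}\|\omega(t)\|_{L^p}^p\leq C(p-1)\,\|\theta_0\|_{L^\infty}^2\,\|\omega(t)\|_{L^{p-2}}^{p-2}.
\end{equation*}
Starting from the $L^2$-bound and iterating $p\mapsto p+2$, then dividing by $(p-1)^{p/2}$ and taking the $p$-th root, one obtains
\begin{equation*}
\sup_{t\in[0,T]}\|\omega(t)\|_{\sqrt L}\leq C(T)\bigl(\|\omega_0\|_{\sqrt L}+\|\theta_0\|_{L^\infty}\bigr).
\end{equation*}
Combining this with the Calder\'on--Zygmund inequality $\|\nabla u\|_{L^p}\leq C\tfrac{p^2}{p-1}\|\omega\|_{L^p}$ transfers the estimate to $\nabla u\in L^\infty_{loc}(\R_+;\sqrt L)\hookrightarrow L^\infty_{loc}(\R_+;LL^{\frac12})$.

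Existence would then follow by a standard smoothing scheme: regularise the initial data, apply the Friedrichs projector to the equations, solve the resulting ODE system, and pass to the limit. All of the bounds above are uniform in the truncation parameter, and the $H^1$-control of $u$ together with the momentum equation gives, via Aubin--Lions, strong compactness in $L^2_{loc}$, enough to pass to the nonlinearities and obtain $u\in C_w(\R_+;H^1)$ and $\theta\in C_w(\R_+;L^\infty)$. The stronger continuity $\theta\in C(\R_+;H^{s-\varepsilon})$ for every $\varepsilon>0$ follows from Lemma \ref{lemma-losing-transport} applied to the transport equation for $\theta$, since the $LL^{\frac12}$ bound on $\nabla u$ supplies exactly the growth condition $\|\nabla S_Nu\|_{L^\infty}\lesssim\sqrt{1+N}$ that the lemma requires.

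The main obstacle is uniqueness, since $\nabla u\notin L^1_tL^\infty$ rules out a direct Gronwall argument on $\delta\theta=\theta_1-\theta_2$. My plan is to estimate the differences $\delta u=u_1-u_2$ and $\delta\theta$ in spaces tailored to this loss: $\delta u$ in $L^\infty_tL^2$ (with $\partial_1\delta u\in L^2_tL^2$) through an energy identity for the momentum difference, and $\delta\theta$ in $L^\infty_tB^{s-\varepsilon}_{2,\infty}$ through the losing transport estimate of Lemma \ref{lemma-losing-transport}, whose $\sqrt{1+N}$-growth tolerance is precisely calibrated to the $LL^{\frac12}$ control of $\nabla u$. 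The two bounds should then close into a coupled Osgood-type inequality that forces $\delta u\equiv 0$ and $\delta\theta\equiv 0$. This Chemin-style argument, upgraded from the Log-Lipschitz setting to $LL^{\frac12}$, is the step which requires the most care, and it is where the restriction $s>\tfrac12$ makes itself felt, through the scaling of the tamed norm carried by $\delta\theta$.
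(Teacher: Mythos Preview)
This theorem is not proved in the paper; it is quoted verbatim as a prior result from Danchin--Paicu \cite{DP1} (see the sentence immediately preceding the statement: ``we need to review the following existence and uniqueness result for system \eqref{system2} which can be found in \cite{DP1}''). There is therefore no proof in this paper to compare your proposal against.

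That said, your outline is a faithful reconstruction of the strategy in \cite{DP1}: the $L^p$ energy estimate on $\omega$ with the $(p-1)$ factor coming from integrating $\partial_1\theta$ against $|\omega|^{p-2}\omega$, the iteration $p\mapsto p+2$ yielding the $\sqrt L$ bound, the Calder\'on--Zygmund transfer to $\nabla u$, and uniqueness via a losing-regularity estimate adapted to $LL^{1/2}$ vector fields. Nothing in your sketch is wrong or out of place; it simply reproduces the argument of the cited reference rather than anything original to the present paper.
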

In the rest of this section, we will first show that the solution $u$ of system \eqref{system2} actually can be in $L^1([0,t]; L^\infty)$ in the first subsection. Then we estimate the straited regularity in the second subsection. In subsections 4.3-4.4, we exam the higher regularity estimate of $(\omega, \theta)$ and the vector field $X$. The proof of Corollary \ref{cor-2} will be given in the last subsection.
	
\subsection{A priori estimates for the Lipschitz norm of the velocity field}

In this subsection, we will give the estimates for the Lipschitz norm of the velocity field and $H^s~(\frac12<s<1)$ norm of $(\omega,\theta)$. Those estimate will be based on the following global existence theorem \cite{DP1}.

Then we give the estimate for $\|\nabla u\|_{L^\infty}$, which plays and important role in the estimate for striated regularity in the next subsections. The main results can be stated as follows.
\begin{lemma}
\label{lipschitz-information}
Assume $\omega_0\in H^s$ and $\theta_0\in H^\beta$ with $\beta>s>\frac12$, then the solution $(\omega,\theta)$ satisfies
\begin{equation*}
\begin{split}
\|\omega\|_{L^\infty_t(H^s)}^2+\|\partial_1\nabla\omega\|_{L^2_t(H^s)}^2\leq C,\quad\quad\|\theta\|_{L^\infty_t(H^s)}^2\leq C,
\end{split}
\end{equation*}
moreover,
\begin{equation*}
\begin{split}
\|\nabla u\|_{L^2_t(L^{\infty})}\leq C.
\end{split}
\end{equation*}
\end{lemma}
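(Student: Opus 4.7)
The plan is to break the circular dependence between the estimates on $\theta$, $\omega$, and $\nabla u$ by exploiting the extra initial regularity $\beta-s>0$ of $\theta_0$. First I would obtain an $H^s$ bound on $\theta$ purely from the transport structure, then bootstrap via an $H^s$ energy estimate on the vorticity equation using the parabolic smoothing in $x_1$, and finally convert the resulting bounds into the Lipschitz estimate on $u$ through an anisotropic Bernstein argument that is valid precisely because $s>1/2$.

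The key input for the temperature bound is that Theorem~\ref{result-DP-model2} already provides $\nabla u \in L^\infty_{\mathrm{loc}}(\R_+;\sqrt L) \hookrightarrow L^\infty_{\mathrm{loc}}(\R_+;LL^{1/2})$, so the quantity $V(t)\triangleq \sup_{N\geq 0}\|S_N\nabla u(t)\|_{L^\infty}/\sqrt{1+N}$ lies in $L^1_{\mathrm{loc}}(\R_+)$. Applying the losing regularity estimate of Lemma~\ref{lemma-losing-transport} to the pure transport equation $\partial_t\theta+u\cdot\nabla\theta=0$ with initial datum in $H^\beta$ and loss $\varepsilon=\beta-s>0$ then yields $\theta\in L^\infty_t H^s$, without appealing to any Lipschitz information on $u$.

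For $\omega$ I would apply $\Delta_q$ to~\eqref{vorticity1}, pair with $\Delta_q\omega$, and treat the transport term by the anisotropic commutator estimate~\eqref{lem1eq} of Lemma~\ref{lemA.1} with $f=\omega$ (this uses the $L^2_t$-control on $\|u\|_{L^2}+\|\omega\|_{L^2}+\|\partial_1\omega\|_{L^2}$ coming from Theorem~\ref{result-DP-model2}); the forcing $\partial_1\theta$ would be handled by integrating by parts in $x_1$ and absorbing the resulting $\|\Delta_q\theta\|_{L^2}\|\partial_1\Delta_q\omega\|_{L^2}$ into the $\|\partial_1\omega\|_{H^s}^2$ dissipation via Young's inequality. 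Summing with weight $2^{2qs}$ leads to a differential inequality of the form
\begin{equation*}
\tfrac{d}{dt}\|\omega\|_{H^s}^2+\|\partial_1\omega\|_{H^s}^2\leq C\bigl(1+A(t)^{4/3}\bigr)\|\omega\|_{H^s}^2+C\|\theta\|_{H^s}^2,
\end{equation*}
with $A(t)\triangleq\|u\|_{L^2}+\|\omega\|_{L^2}+\|\partial_1\omega\|_{L^2}\in L^2_t$, and Gronwall's lemma together with the previous step closes the estimates $\omega\in L^\infty_t H^s$ and $\partial_1\omega\in L^2_t H^s$.

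To deduce $\nabla u\in L^2_t L^\infty$, I would begin from Biot-Savart and the frequency-localized Calder\'on-Zygmund estimate $\|\nabla u\|_{L^\infty}\lesssim \|u\|_{L^2}+\sum_{q\geq 0}\|\Delta_q\omega\|_{L^\infty}$. Bernstein applied only in the $x_2$-direction combined with the one-dimensional Sobolev embedding $L^\infty_{x_1}\hookrightarrow H^{1/2}_{x_1}$ gives $\|\Delta_q\omega\|_{L^\infty}\leq C\,2^{q/2}\|\Delta_q\omega\|_{L^2}^{1/2}\|\Delta_q\partial_1\omega\|_{L^2}^{1/2}$. Redistributing weights $2^{qs/2}$ and applying Cauchy--Schwarz in $\ell^2$ leaves the geometric factor $2^{q(1-2s)/2}$, which is summable precisely when $s>1/2$, and produces the pointwise-in-time bound $\|\nabla u(t)\|_{L^\infty}\lesssim 1+\|\omega(t)\|_{H^s}^{1/2}\|\partial_1\omega(t)\|_{H^s}^{1/2}$; one final Cauchy--Schwarz in time then yields the claim. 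The central difficulty is precisely the circular dependence highlighted above: the $H^s$ bound on $\omega$ needs the $H^s$ bound on $\theta$, while the naive $H^s$ estimate on $\theta$ through~\eqref{lem1eq1} would demand $\nabla u\in L^1_t L^\infty$, which is the output of the lemma. This loop is broken by the losing regularity estimate, which only requires the weaker $L^1_t LL^{1/2}$ data already furnished by Theorem~\ref{result-DP-model2}, at the cost of spending the excess temperature regularity $\beta-s>0$.
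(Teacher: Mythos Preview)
Your proposal is correct and follows the paper's strategy closely: both proofs first invoke Theorem~\ref{result-DP-model2} to place $\nabla u$ in $LL^{1/2}$, spend the extra $\beta-s$ regularity of $\theta_0$ through Lemma~\ref{lemma-losing-transport} to get $\theta\in L^\infty_t H^s$, and then run the $H^s$ energy estimate on the vorticity via the anisotropic commutator bound~\eqref{lem1eq} of Lemma~\ref{lemA.1}, absorbing the $\partial_1\theta$ forcing into the horizontal dissipation exactly as you describe.

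The only substantive difference is in the extraction of the Lipschitz bound. The paper does not go through the Littlewood--Paley sum $\sum_q\|\Delta_q\omega\|_{L^\infty}$; instead it uses the trace inequality $\|g\|_{L^\infty_{x_2}(H^{s-1/2}_{x_1})}\lesssim\|g\|_{H^s}$ together with one-dimensional Sobolev in $x_1$ to get $\omega\in L^2_t L^\infty$, and then exploits the algebraic identities $\partial_1\omega=\Delta u^2$, $\partial_1 u^1=-\partial_2 u^2$, $\partial_2 u^1=\partial_1 u^2-\omega$ to promote $\partial_1\omega\in L^2_t H^s$ to $\partial_i u^j\in L^2_t H^{s+1}\hookrightarrow L^2_t L^\infty$ for $(i,j)\neq(2,1)$, treating $\partial_2 u^1$ last. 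Your route via anisotropic Bernstein and the one-dimensional interpolation $\|f\|_{L^\infty_{x_1}}\lesssim\|f\|_{L^2_{x_1}}^{1/2}\|\partial_1 f\|_{L^2_{x_1}}^{1/2}$ (which is what you need here, not the endpoint embedding $H^{1/2}\hookrightarrow L^\infty$, which fails) is more direct and avoids the component-by-component bookkeeping; the paper's route makes the role of the structural identity $\partial_1\omega=\Delta u^2$ explicit. Both hinge on the same mechanism---the horizontal smoothing supplies one extra half-derivative, and $s>\tfrac12$ supplies the rest---so the two arguments are really two packagings of the same estimate.
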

\begin{proof}
Because of Theorem \ref{result-DP-model2}, we have already know $\nabla u\in\sqrt{L}$.
Then according to the definition of space $\sqrt{L}$ and Lemma \ref{lemma-losing-transport}, we know
\begin{equation}
\label{theta-Hs-estimate}
\|\theta\|_{L^\infty([0,t];H^s)}\leq C(t).
\end{equation}
Then we give the estimate of $\omega$. Applying $\Delta_q$ to the vorticity equation \eqref{vorticity1} and taking $L^2$ inner product with $\Delta_q\omega$, one can obtain
\begin{equation}
\frac12\frac {d}{dt}\|\Delta_q\omega(t)\|_{L^2}^2+\|\partial_1\Delta_q\omega\|_{L^2}^2
= \int_{\R^2}\partial_1\Delta_q\theta\Delta_q\omega~dx-\int_{\R^2}\Delta_q(u\cdot\nabla\omega)\Delta_q\omega~dx.
\end{equation}
After integration by part, according to H\"older inequality and Young's inequality,
\begin{equation}
\label{eq-omega}
\frac12\frac {d}{dt}\|\Delta_q\omega(t)\|_{L^2}^2+\frac12\|\partial_1\Delta_q\omega\|_{L^2}^2
\leq C\|\Delta_q\theta\|_{L^2}^2-\int_{\R^2}\Delta_q(u\cdot\nabla\omega)\Delta_q\omega~dx.
\end{equation}
By Lemma \ref{lemA.1} and Young's inequality,
\begin{equation}
\label{equation1}
\begin{split}
-\int_{\R^2}\Delta_q(u\cdot \nabla \omega)\Delta_q \omega~dx\leq&  C b_q2^{-2qs}(\|u\|_{L^2}+\|\omega\|_{L^2}+\|\partial_1\omega\|_{L^2})\\&\quad\quad\times(\|\omega\|_{H^s}^2+\|\omega\|_{H^s}^\frac12\|\partial_1\omega\|_{H^s}^\frac12+\|\omega\|_{H^s}^\frac32\|\partial_1\omega\|_{H^s}^\frac12)\\
\leq& C b_q2^{-2qs} \|\omega\|_{H^s}^2+\frac{1}{4\|b_q\|_{\ell^1}}b_q2^{-2qs} \|\partial_1\omega\|_{H^s}^2.
\end{split}
\end{equation}
According to the bound \eqref{theta-Hs-estimate},
\begin{equation}
\label{equation2}
\begin{split}
\|\Delta_q\theta\|_{L^2}^2\leq b_q2^{-2qs}\|\theta\|_{H^s}^2.
\end{split}
\end{equation}
Inserting \eqref{equation1}, \eqref{equation2} into \eqref{eq-omega} and taking summation of $q$, after calculation we obtain
\begin{equation*}
\frac {d}{dt}\|\omega(t)\|_{H^s}^2+\|\partial_1\omega\|_{H^s}^2
\leq C(1+\|\omega(t)\|_{H^s}^2).
\end{equation*}
Then by Gr\"onwall's Lemma, we get
\begin{equation*}
\|\omega(t)\|_{H^s}^2+\int_{0}^{t}\|\partial_1\omega(\tau)\|_{H^s}^2~d\tau
\leq C.
\end{equation*}
According to trace theory, we know
\begin{equation*}
\|f(x_1,x_2)\|_{L^\infty_{x_2}(H^{\alpha-\frac12}_{x_1})}\leq C\|f(x_1,x_2)\|_{H^{\alpha}},\quad\text{for}~\alpha>\frac12.
\end{equation*}
Thus by Sobolev embedding,
\begin{equation*}
\begin{split}
\int_{0}^{t}\|\omega(\tau)\|_{L^\infty}^2~d\tau&\leq
\int_{0}^{t}\|\omega(\tau)\|_{L^\infty_{x_2}(H^{s+\frac12}_{x_1})}^2~d\tau\\
&\leq \int_{0}^{t}\|\partial_1\omega(\tau)\|_{L^\infty_{x_2}(H^{s-\frac12}_{x_1})}^2~d\tau\\
&\leq \int_{0}^{t}\|\partial_1\omega(\tau)\|_{H^{s}}^2~d\tau\\
&\leq C.
\end{split}
\end{equation*}
\noindent
Noticing that $\partial_1\omega=\Delta u^2$ and $\partial_1u^1+\partial_2u^2=0$, we have
\begin{equation*}
\int_{0}^{t}\|\partial_iu^j(\tau)\|_{H^{s+1}}^2~d\tau\leq C,\quad \text{for}~i,j=1,2,~(i,j)\neq(2,1).
\end{equation*}
Then by Sobolev embedding,
\begin{equation*}
\int_{0}^{t}\|\partial_iu^j(\tau)\|_{L^\infty}^2~d\tau\leq C,\quad \text{for}~i,j=1,2,~(i,j)\neq(2,1).
\end{equation*}
As for $(i,j)=(2,1)$, according to the definition of vorticity $\omega$,
\begin{equation*}
\partial_2u^1=\partial_1u^2-\omega,
\end{equation*}
so
\begin{equation*}
\int_{0}^{t}\|\partial_2u^1(\tau)\|_{L^\infty}^2~d\tau\leq \int_{0}^{t}\|\partial_1u^2(\tau)\|_{L^\infty}^2~d\tau+
\int_{0}^{t}\|\omega(\tau)\|_{L^\infty}^2~d\tau\leq C.
\end{equation*}
Thus we obtain $\|\nabla u\|_{L^2_t(L^{\infty})}$ is bounded, which completes the proof of this lemma.
\end{proof}
\subsection{A priori estimates for striated regularity.}
~\\
In this section, we will give some estimates about the vector field $X$. Along the same method of Lemma \ref{X-Lr-estimate} and combining with Lemma \ref{lipschitz-information},
one can deduce for any $r\in [1,\infty]$,
\begin{equation}
\|X(t)\|_{L^r}\leq C\|X_0\|_{L^r}e^{\int_0^t\|\nabla u(\tau)\|_{L^\infty}}~d\tau\leq C(t).
\end{equation}

The next lemma shows the $H^s~(\frac12<s<1)$ estimate for $X$. 

\begin{lemma}
\label{Hs-X}
Let $s>\frac{1}{2}$, $X_0\in H^s$ and $(\omega_0,\theta_0)\in H^s\times H^\beta$ with $\beta>s$. Then the solution $X$ of \eqref{X-eq} satisfies
\begin{equation*}
X\in L^\infty([0,t];H^s),
\end{equation*}
for any $t>0$.
\end{lemma}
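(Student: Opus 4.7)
The plan is to carry out an $H^{s}$ energy estimate on equation \eqref{X-eq} in the Littlewood--Paley framework, in the same spirit as Lemma \ref{L-infty-X} but tracking the high-frequency Sobolev norm. First I would apply $\Delta_{q}$ to \eqref{X-eq} and take the $L^{2}$ scalar product with $\Delta_{q}X$, obtaining
\begin{equation*}
\frac{1}{2}\frac{d}{dt}\|\Delta_{q}X\|_{L^{2}}^{2}=-\int_{\R^{2}}\Delta_{q}(u\cdot\nabla X)\,\Delta_{q}X\,dx+\int_{\R^{2}}\Delta_{q}(X\cdot\nabla u)\,\Delta_{q}X\,dx.
\end{equation*}
The transport contribution is handled directly by Lemma \ref{lemA.1} (first inequality applied to $f=X$), producing a bound of the form $Cb_{q}2^{-2qs}\|\nabla u\|_{L^{\infty}}\|X\|_{H^{s}}^{2}$; crucially, $\nabla u\in L^{2}([0,t];L^{\infty})$ by Lemma \ref{lipschitz-information}, so this term is time-integrable.

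For the stretching term $\partial_{X}u=X\cdot\nabla u$, I would use Bony's paraproduct decomposition
\begin{equation*}
X\cdot\nabla u=T_{X}\nabla u+T_{\nabla u}X+R(X,\nabla u),
\end{equation*}
and estimate each piece in $H^{s}$. The terms $T_{\nabla u}X$ and $R(X,\nabla u)$ are controlled by $C\|\nabla u\|_{L^{\infty}}\|X\|_{H^{s}}$, which is admissible for $s>0$ and integrable in time. The paraproduct $T_{X}\nabla u$ is handled by combining the Sobolev embedding $H^{s}\hookrightarrow L^{2/(1-s)}$ (valid for $\frac12<s<1$ in $\R^{2}$) with the extra Sobolev regularity $\nabla u\in L^{\infty}_{t}H^{s}$ that Biot--Savart provides from the bound $\omega\in L^{\infty}_{t}H^{s}$ in Lemma \ref{lipschitz-information}; Bernstein's inequality applied at frequency $2^{j}$ in $L^{2/s}$ then absorbs the high frequency of $\nabla u$ against the $L^{2/(1-s)}$ norm of $S_{j-1}X$.

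Multiplying by $2^{2qs}$, summing over $q\ge -1$, and collecting estimates leads to a differential inequality of the form
\begin{equation*}
\frac{d}{dt}\|X(t)\|_{H^{s}}^{2}\leq C\bigl(1+\|\nabla u(t)\|_{L^{\infty}}+\|\nabla u(t)\|_{H^{s}}\bigr)\|X(t)\|_{H^{s}}^{2},
\end{equation*}
and since $\|\nabla u\|_{L^{\infty}}\in L^{2}_{t}$ and $\|\nabla u\|_{H^{s}}\in L^{\infty}_{t}$ are both integrable on $[0,t]$ by Lemma \ref{lipschitz-information} and the Calder\'on--Zygmund bound applied to $\omega\in L^{\infty}_{t}H^{s}$, Gr\"onwall's inequality closes the estimate and yields $X\in L^{\infty}([0,t];H^{s})$.

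The main technical obstacle is precisely the control of $T_{X}\nabla u$ in $H^{s}$ when $s\in(\tfrac12,1)$: since $H^{s}(\R^{2})$ is not an algebra in this range and $X_{0}\in H^{s}$ does not embed into $L^{\infty}$, the naive bilinear estimate $\|fg\|_{H^{s}}\lesssim\|f\|_{L^{\infty}}\|g\|_{H^{s}}+\|f\|_{H^{s}}\|g\|_{L^{\infty}}$ cannot be directly applied with $f=X$. The resolution is to redistribute derivatives and integrability between the two factors via Sobolev embedding on $X$ and to exploit the fact that $\nabla u$, thanks to the anisotropic smoothing in \eqref{system2}, enjoys both $L^{\infty}$ and $H^{s}$ regularity in a time-integrable sense.
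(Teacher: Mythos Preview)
Your overall setup---localize \eqref{X-eq} with $\Delta_q$, test against $\Delta_q X$, handle the transport term via Lemma~\ref{lemA.1}, then close with Gr\"onwall using the Lipschitz bound from Lemma~\ref{lipschitz-information}---matches the paper's proof. The divergence is in how the stretching term $\partial_X u=X\cdot\nabla u$ is estimated in $H^s$.

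Your proposed treatment of $T_X\nabla u$ does not close. Carrying out exactly what you describe: with $\|S_{j-1}X\|_{L^{2/(1-s)}}\le C\|X\|_{H^s}$ and Bernstein $\|\Delta_j\nabla u\|_{L^{2/s}}\le C2^{j(1-s)}\|\Delta_j\nabla u\|_{L^2}$, H\"older gives
\[
2^{qs}\|\Delta_q(T_X\nabla u)\|_{L^2}\le C\,\|X\|_{H^s}\,2^{q}\|\Delta_q\nabla u\|_{L^2},
\]
so summing requires $\nabla u\in H^1$, i.e.\ $\omega\in H^1$, whereas Lemma~\ref{lipschitz-information} only provides $\omega\in L^\infty_t H^s$ with $s<1$. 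The anisotropic gain $\partial_1\omega\in L^2_t H^s$ rescues every component of $\nabla u$ \emph{except} $\partial_2 u^1=\partial_1 u^2-\omega$; the piece $T_{X^2}\omega$ survives and cannot be placed in $H^s$ from $X\in H^s$ and $\omega\in H^s\cap L^\infty$ alone. This is precisely the obstruction you flagged: for $\tfrac12<s<1$ the product $H^s\times H^s$ maps only into $H^{2s-1}\subsetneq H^s$.

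The paper sidesteps this by using the $L^\infty$ bound on $X$ (stated just before the lemma as a consequence of Lemma~\ref{L-infty-X}) and the product law
\[
\|X\cdot\nabla u\|_{H^s}\le C\bigl(\|X\|_{L^\infty}\|\nabla u\|_{H^s}+\|X\|_{H^s}\|\nabla u\|_{L^\infty}\bigr),
\]
together with $\|\nabla u\|_{H^s}\le C\|\omega\|_{H^s}\le C$. You are right that $H^s(\R^2)\not\hookrightarrow L^\infty$ for $s<1$, so this step tacitly uses $X_0\in L^\infty$, which is not written in the hypotheses of the lemma but is assumed throughout the section (and holds in the applications, where $X_0=\nabla^\perp f_0$ with $f_0$ compactly supported). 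Without that extra assumption, your attempted redistribution of regularity does not suffice; you would need to either add $X_0\in L^\infty$ to the hypotheses, as the paper effectively does, or find a genuinely different argument for $T_{X^2}\omega$.
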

\begin{proof}
Applying operator $\Delta_q$ to \eqref{X-eq},
\begin{equation}
\partial_t\Delta_qX+\Delta_q(u\cdot\nabla X)=\Delta_q\partial_Xu.
\end{equation}
Taking the $L^2$ inner product of the above equality with $\Delta_q X$, we get
\begin{equation}
\label{Hs-X-estimate}
\begin{split}
\frac{1}{2}\frac{d}{dt}\|\Delta_qX(t)\|_{L^2}^2&=-\int_{\R^2}\Delta_q(u\cdot\nabla X)\cdot \Delta_q X~d\tau+\int_{\R^2}\Delta_q\partial_Xu\cdot \Delta_q X~d\tau.\\
\end{split}
\end{equation}
For the first term of the right hand side in \eqref{Hs-X-estimate}. By Lemma \ref{lemA.1}, we have
\begin{equation}
\begin{split}
\label{X-Hs-term1}
-\int_{\R^2}\Delta_q(u\cdot \nabla X)\Delta_q X~dx\leq&  C b_q2^{-2qs}\|\nabla u\|_{L^\infty}
\|X\|_{H^s}^2.
\end{split}
\end{equation}
Then we estimate the last term of \eqref{Hs-X-estimate}, by H\"older inequality,
\begin{equation*}
\begin{split}
\int_{\R^2}\Delta_q\partial_Xu\cdot \Delta_q X~d\tau\leq&  \|\Delta_q\partial_Xu\|_{L^2}
\|\Delta_q X\|_{L^2}\leq Cb_q2^{-2qs}\|\partial_X u\|_{H^s}
\|X\|_{H^s}.
\end{split}
\end{equation*}
For $H^s$ norm of $\partial_Xu$, we can bound it by
\begin{equation*}
\begin{split}
\|\partial_X u\|_{H^s}=\|X\cdot\nabla u\|_{H^s}\leq C
(\|X\|_{L^\infty}\|\nabla u\|_{H^s}+\|X\|_{H^s}\|\nabla u\|_{L^\infty}).
\end{split}
\end{equation*}
By Lemma \ref{lipschitz-information} and Lemma \ref{L-infty-X}, we have known
\begin{equation*}
\begin{split}
\|X\|_{L^\infty}\leq C,~~\quad\quad\quad~~\|\nabla u\|_{H^s}\leq C\|\omega\|_{H^s}\leq C.
\end{split}
\end{equation*}
Thus we obtain
\begin{equation}
\label{X-Hs-term2}
\begin{split}
\int_0^t\Delta_q\partial_Xu\cdot \Delta_q X~d\tau\leq Cb_q2^{-2qs}
(\|X\|_{H^s}+\|\nabla u\|_{L^\infty}\|X\|_{H^s}^2).
\end{split}
\end{equation}
Inserting the estimates \eqref{X-Hs-term1} and \eqref{X-Hs-term2} into \eqref{Hs-X-estimate} then multiplying both sides by $2^{2qs}$ and taking summation over $q\geq-1$, we obtain
\begin{equation}
\label{Hs-X-estimate-final}
\begin{split}
\frac{1}{2}\frac{d}{dt}\|X(t)\|_{H^s}^2&\leq C(\|X\|_{H^s}+\|\nabla u\|_{L^\infty}\|X\|_{H^s}^2).
\end{split}
\end{equation}
Then by Gr\"onwall's Lemma and combining with Lemma \ref{lipschitz-information}, we
obtain
\begin{equation*}
\begin{split}
\|X\|_{H^s}\leq C(t),
\end{split}
\end{equation*}
which completes the proof of this lemma.

\end{proof}

\subsection{A priori estimates for $\omega$ and $\theta$.}
~\\
In this subsection, we will give some  regularity estimates for $(\omega,\theta)$ based on the Lipschitz information $\|\nabla u\|_{L^1_t(L^\infty_x)}$.
The following lemma gives the $H^1$ estimate
of $(\omega,\theta)$.
\begin{lemma}
Assume $\omega_0\in H^1$ and $\theta_0\in H^1$, then the solution $(\omega,\theta)$ satisfies
\begin{equation}
\begin{split}
\|\nabla\omega\|_{L^\infty_t(L^2)}^2+\|\nabla\theta\|_{L^\infty_t(L^2)}^2+\|\partial_1\nabla\omega\|_{L^2_t(L^2)}^2\leq C.
\end{split}
\end{equation}
\end{lemma}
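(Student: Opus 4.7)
The plan is a straightforward energy estimate, carried out first for $\nabla\theta$ (whose equation does not feed back into itself through $\omega$) and then for $\nabla\omega$ (which gets a forcing from $\nabla\theta$).

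First I would differentiate the temperature equation and write, for each $i\in\{1,2\}$,
\begin{equation*}
\partial_t\partial_i\theta+u\cdot\nabla\partial_i\theta=-\partial_i u\cdot\nabla\theta.
\end{equation*}
Testing against $\partial_i\theta$, the transport term vanishes by $\nabla\cdot u=0$, and H\"older yields
\begin{equation*}
\tfrac{d}{dt}\|\nabla\theta(t)\|_{L^2}^2\leq C\|\nabla u\|_{L^\infty}\|\nabla\theta\|_{L^2}^2.
\end{equation*}
Gr\"onwall combined with the bound $\|\nabla u\|_{L^2_t(L^\infty)}\leq C$ from Lemma \ref{lipschitz-information} (so that $\int_0^t\|\nabla u\|_{L^\infty}\leq t^{1/2}\|\nabla u\|_{L^2_tL^\infty_x}\leq C(t)$) gives $\|\nabla\theta\|_{L^\infty_t(L^2)}\leq C$.

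Next I would differentiate the vorticity equation \eqref{vorticity1}:
\begin{equation*}
\partial_t\partial_i\omega+u\cdot\nabla\partial_i\omega-\partial_1^2\partial_i\omega=\partial_1\partial_i\theta-\partial_i u\cdot\nabla\omega.
\end{equation*}
Testing with $\partial_i\omega$ and integrating by parts in $x_1$ on the forcing term,
\begin{equation*}
\int\partial_1\partial_i\theta\,\partial_i\omega\,dx=-\int\partial_i\theta\,\partial_1\partial_i\omega\,dx\leq \tfrac12\|\partial_1\partial_i\omega\|_{L^2}^2+\tfrac12\|\partial_i\theta\|_{L^2}^2,
\end{equation*}
so that (the advection integral again vanishes by divergence-freeness, leaving only $\int(\partial_iu\cdot\nabla\omega)\partial_i\omega\leq\|\nabla u\|_{L^\infty}\|\nabla\omega\|_{L^2}^2$) summing over $i$,
\begin{equation*}
\tfrac{d}{dt}\|\nabla\omega\|_{L^2}^2+\|\partial_1\nabla\omega\|_{L^2}^2\leq C\|\nabla u\|_{L^\infty}\|\nabla\omega\|_{L^2}^2+C\|\nabla\theta\|_{L^2}^2.
\end{equation*}
The last term is already bounded in $L^\infty_t$, and the multiplicative coefficient $\|\nabla u\|_{L^\infty}$ is in $L^1_t$; a final Gr\"onwall argument gives the desired bound on $\|\nabla\omega\|_{L^\infty_t(L^2)}^2+\|\partial_1\nabla\omega\|_{L^2_t(L^2)}^2$.

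I do not foresee any serious obstacle here, since we have already upgraded $\nabla u$ to $L^2_t(L^\infty)$ in Lemma \ref{lipschitz-information}. The only delicate point is the handling of the forcing $\partial_1\theta$ in the vorticity equation: one must put the $\partial_1$ onto $\partial_i\omega$ so that it can be absorbed by the dissipation $\|\partial_1\nabla\omega\|_{L^2}^2$, rather than trying to bound $\partial_1\nabla\theta$ directly (which would require an additional regularity on $\theta$ that is not available in the hypotheses). This anisotropic integration by parts is the crux of the argument.
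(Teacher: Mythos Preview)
Your proposal is correct and follows essentially the same approach as the paper: differentiate both equations, test against $\partial_i\omega$ and $\partial_i\theta$, move the $\partial_1$ from $\partial_1\partial_i\theta$ onto $\partial_i\omega$ to absorb it into the dissipation, bound the commutator terms by $\|\nabla u\|_{L^\infty}\|\nabla\omega\|_{L^2}^2$ and $\|\nabla u\|_{L^\infty}\|\nabla\theta\|_{L^2}^2$, and close with Gr\"onwall using the Lipschitz bound from Lemma~\ref{lipschitz-information}. The only cosmetic difference is that the paper runs the two energy inequalities jointly before applying Gr\"onwall, whereas you treat $\nabla\theta$ first and then feed it into the $\nabla\omega$ estimate; both organizations yield the same bound.
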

\begin{proof}
Applying $\partial_k~(k=1,2)$ to the vorticity equation \eqref{vorticity1}, we can obtain $\partial_k \omega$ satisfies
\begin{equation}
\label{nabla-omega}
\begin{split}
\partial_t \partial_k\omega+u\cdot\nabla \partial_k\omega+ \partial_ku\cdot\nabla \omega-\partial_1^2\partial_k\omega=\partial_1\partial_k\theta.
\end{split}
\end{equation}
Multiplying $\partial_k \omega$ to \eqref{nabla-omega} and integrating over $\R^2$ with respect to $x$, we have
\begin{equation}
\label{H1--omega}
\begin{split}
\frac12\frac{d}{dt}\|\partial_k\omega(t)\|_{L^2}^2+\|\partial_1\partial_k\omega\|_{L^2}^2
&=\int_{\R^2}\partial_1\partial_k\theta \partial_k\omega~dx
-\int_{\R^2}\partial_ku\cdot \nabla \omega \partial_k\omega~dx\\
&\triangleq N_1+N_2.
\end{split}
\end{equation}
After integration by part and using H\"older inequality and Young's inequality, one can deduce
\begin{equation}
\label{N1}
\begin{split}
N_1\leq\frac12\|\partial_1\nabla\omega\|_{L^2}^2+\frac12\|\nabla\theta\|_{L^2}^2.
\end{split}
\end{equation}
For $N_2$, by H\"older inequality,
\begin{equation}
\label{N2}
\begin{split}
N_2\leq\|\nabla u\|_{L^\infty}\|\nabla \omega\|_{L^2}^2.
\end{split}
\end{equation}
Applying $\partial_k~(k=1,2)$ to the temperature equation of \eqref{system2}, we can obtain $\partial_k \theta$ satisfies
\begin{equation}
\label{nabla-theta}
\begin{split}
\partial_t \partial_k\theta+u\cdot\nabla \partial_k\theta+ \partial_ku\cdot\nabla \theta=0.
\end{split}
\end{equation}
Similarly, we can prove
\begin{equation}
\label{H1--theta}
\begin{split}
\frac12\frac{d}{dt}\|\partial_k\theta(t)\|_{L^2}^2\leq C\|\nabla u\|_{L^\infty}\|\nabla \theta\|_{L^2}^2.
\end{split}
\end{equation}
Inserting \eqref{N1} and \eqref{N2} into \eqref{H1--omega} and combining with \eqref{H1--theta}, we can deduce
\begin{equation*}
\begin{split}
\frac{d}{dt}(\|\nabla\omega(t)\|_{L^2}^2+\|\nabla\theta(t)\|_{L^2}^2)+\|\partial_1\nabla\omega(t)\|_{L^2}^2\leq C\|\nabla u\|_{L^\infty}(\|\nabla \omega\|_{L^2}^2+\|\nabla \theta\|_{L^2}^2).
\end{split}
\end{equation*}
Then by virtue of the Gr\"onwall's Lemma and Lemma \ref{lipschitz-information},
\begin{equation*}
\begin{split}
\|\nabla\omega\|_{L^\infty_t(L^2)}^2+\|\nabla\theta\|_{L^\infty_t(L^2)}^2+\|\partial_1\nabla\omega\|_{L^2_t(L^2)}^2\leq C,
\end{split}
\end{equation*}
which complete the proof of this lemma.
\end{proof}

The follow lemma shows the $L^p$ estimate
of $(\nabla\omega,\nabla\theta)$.
\begin{lemma}
\label{w1p-omega-theta}
Assume $\nabla\omega_0\in L^p$ and $\nabla\theta_0\in L^p$~$(2<p<\infty)$, then the solution $(\omega,\theta)$ satisfies
\begin{equation}
\begin{split}
\|\nabla\omega\|_{L^\infty_t(L^p)}^2+\|\nabla\theta\|_{L^\infty_t(L^p)}^2\leq C.
\end{split}
\end{equation}
\end{lemma}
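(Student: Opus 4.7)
My plan is to differentiate the equations for $\omega$ and $\theta$ once in space and then run a quasi-standard $L^p$ energy estimate on each. The key observation is that the anisotropic horizontal dissipation $-\partial_1^2\omega$ in \eqref{vorticity1} is tuned exactly to control the horizontal derivative forcing term $\partial_1\theta$, even after one more derivative is applied. Concretely, after applying $\partial_k$ ($k=1,2$) to \eqref{vorticity1} and to the temperature equation of \eqref{system2} one finds
\begin{equation*}
\partial_t\partial_k\omega+u\cdot\nabla\partial_k\omega+\partial_ku\cdot\nabla\omega-\partial_1^2\partial_k\omega=\partial_1\partial_k\theta,\qquad
\partial_t\partial_k\theta+u\cdot\nabla\partial_k\theta+\partial_ku\cdot\nabla\theta=0,
\end{equation*}
and the Lipschitz bound $\|\nabla u\|_{L^1_t(L^\infty)}\leq C$ from Lemma \ref{lipschitz-information} will drive the Gr\"onwall step at the end.

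For $\nabla\theta$, I would multiply the second identity by $|\partial_k\theta|^{p-2}\partial_k\theta$ and integrate. The transport term disappears thanks to $\nabla\cdot u=0$, and H\"older's inequality immediately bounds the stretching term by $\|\nabla u\|_{L^\infty}\|\nabla\theta\|_{L^p}^p$. This yields
\begin{equation*}
\tfrac{d}{dt}\|\nabla\theta\|_{L^p}\leq C\|\nabla u\|_{L^\infty}\|\nabla\theta\|_{L^p},
\end{equation*}
so Gr\"onwall gives $\|\nabla\theta\|_{L^\infty_t(L^p)}\leq C$ at once.

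The main obstacle is the vorticity piece, because the forcing $\partial_1\partial_k\theta$ carries a second derivative of $\theta$ that we do not control. Here I would exploit the fact that the lost derivative is purely horizontal and matches the horizontal dissipation. Test the $\partial_k\omega$ equation against $|\partial_k\omega|^{p-2}\partial_k\omega$: the dissipation produces $(p-1)\int|\partial_k\omega|^{p-2}|\partial_1\partial_k\omega|^2\,dx$, the transport is free, and the stretching is bounded by $\|\nabla u\|_{L^\infty}\|\nabla\omega\|_{L^p}^p$. For the forcing, I integrate by parts in $x_1$ to get $-(p-1)\int\partial_k\theta\,|\partial_k\omega|^{p-2}\partial_1\partial_k\omega\,dx$, then apply Cauchy--Schwarz with the weight $|\partial_k\omega|^{(p-2)/2}$ on each factor. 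Half of the resulting bound, namely $\tfrac{p-1}{2}\int|\partial_k\omega|^{p-2}|\partial_1\partial_k\omega|^2\,dx$, is absorbed into the dissipation, and the remainder is handled by H\"older's inequality as $\tfrac{p-1}{2}\|\nabla\theta\|_{L^p}^2\|\nabla\omega\|_{L^p}^{p-2}$.

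Putting $z(t)\triangleq\|\nabla\omega(t)\|_{L^p}^2$ and using $\tfrac{d}{dt}\|\partial_k\omega\|_{L^p}^p=\tfrac{p}{2}\|\nabla\omega\|_{L^p}^{p-2}\tfrac{d}{dt}z$, the inequality above reduces to
\begin{equation*}
\tfrac{d}{dt}z(t)\leq C\|\nabla u\|_{L^\infty}z(t)+C\|\nabla\theta\|_{L^p}^2.
\end{equation*}
Combining with the already proved bound on $\|\nabla\theta\|_{L^p}$ and invoking Gr\"onwall's lemma together with $\|\nabla u\|_{L^1_t(L^\infty)}\leq C$ (a consequence, via Cauchy--Schwarz in time, of Lemma \ref{lipschitz-information}) produces the desired global control of $\|\nabla\omega\|_{L^\infty_t(L^p)}$, and we are done. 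The only genuinely delicate step is the absorption of $\partial_1\partial_k\theta$ by the horizontal dissipation; the rest is routine once that alignment is exploited.
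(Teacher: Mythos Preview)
Your argument is correct and mirrors the paper's proof essentially line for line: differentiate, test against $|\partial_k\omega|^{p-2}\partial_k\omega$ (resp.\ $|\partial_k\theta|^{p-2}\partial_k\theta$), integrate the forcing $\partial_1\partial_k\theta$ by parts in $x_1$ and absorb half into the horizontal dissipation via Young's inequality, then close with Gr\"onwall using $\|\nabla u\|_{L^1_t(L^\infty)}$. The only cosmetic difference is that the paper treats $\omega$ and $\theta$ together before applying Gr\"onwall, whereas you first close $\nabla\theta$ and then feed it into the $\nabla\omega$ estimate; both orderings work.
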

\begin{proof}
Multiplying $|\partial_k \omega|^{p-2}\partial_k \omega$ to \eqref{nabla-omega} and integrating over $\R^2$ with respect to $x$, by H\"older inequality and Young's inequality,
\begin{equation*}
\begin{split}
&\quad\frac1p\frac{d}{dt}\|\partial_k\omega(t)\|_{L^p}^p+(p-1)\int_{\R^2}|\partial_1\partial_k\omega|^2|\partial_k\omega|^{p-2}~dx\\
&=\int_{\R^2}\partial_1\partial_k\theta |\partial_k\omega|^{p-2}\partial_k\omega~dx
-\int_{\R^2}\partial_ku\cdot \nabla \omega |\partial_k\omega|^{p-2}\partial_k\omega~dx\\
&=-(p-1)\int_{\R^2}\partial_k\theta |\partial_k\omega|^{p-2}\partial_1\partial_k\omega~dx
-\int_{\R^2}\partial_ku\cdot \nabla \omega |\partial_k\omega|^{p-2}\partial_k\omega~dx\\
&\leq \frac{p-1}{2}\int_{\R^2}|\partial_1\partial_k\omega|^2|\partial_k\omega|^{p-2}~dx+C\int_{\R^2}|\partial_k\theta|^2 |\partial_k\omega|^{p-2}~dx\\
&\quad+\|\nabla u \|_{L^\infty}\|\nabla \omega\|_{L^p}\\
&\leq \frac{p-1}{2}\int_{\R^2}|\partial_1\partial_k\omega|^2|\partial_k\omega|^{p-2}~dx+C\|\nabla \theta\|_{L^p}^2\|\nabla \omega\|_{L^p}^{p-2}
+\|\nabla u \|_{L^\infty}\|\nabla \omega\|_{L^p}^p.
\end{split}
\end{equation*}
Thus we obtain
\begin{equation}
\label{Lp--nabla-omega}
\begin{split}
\frac{d}{dt}\|\partial_k\omega(t)\|_{L^p}^2\leq C\|\nabla \theta\|_{L^p}^2
+\|\nabla u \|_{L^\infty}\|\nabla \omega\|_{L^p}^2.
\end{split}
\end{equation}
Similarly, we can prove
\begin{equation}
\label{Lp--nabla-theta}
\begin{split}
\frac{d}{dt}\|\partial_k\theta(t)\|_{L^p}^2\leq C\|\nabla u\|_{L^\infty}\|\nabla \theta\|_{L^p}^2.
\end{split}
\end{equation}
Combining \eqref{Lp--nabla-omega} with \eqref{Lp--nabla-theta}, we can deduce
\begin{equation*}
\begin{split}
\frac{d}{dt}(\|\nabla\omega(t)\|_{L^p}^2+\|\nabla\theta(t)\|_{L^p}^2)\leq C(1+\|\nabla u\|_{L^\infty})(\|\nabla \omega\|_{L^p}^2+\|\nabla \theta\|_{L^p}^2).
\end{split}
\end{equation*}
Then by virtue of the Gr\"onwall's Lemma and Lemma \ref{lipschitz-information},
\begin{equation*}
\begin{split}
\|\nabla\omega\|_{L^p}^2+\|\nabla\theta\|_{L^p}^2\leq C(t),
\end{split}
\end{equation*}
which completes the proof of this lemma.
\end{proof}

Next we discuss the higher order regularity estimate for $(\omega,\theta)$.
Applying $|D|^s~(s>0)$ to the vorticity equation \eqref{vorticity1} and temperature equation of \eqref{system2}, we can get $(|D|^s\omega, |D|^s \theta)$ satisfies the following system,
\begin{equation}
\label{Hs-system}
\left\{
\begin{array}{cc}
\begin{split}
&\partial_t |D|^s\omega+u\cdot\nabla |D|^s\omega-\partial_1^2|D|^s\omega=\partial_1|D|^s\theta-[|D|^s,u\cdot\nabla]\omega,\\
&\partial_t |D|^s\theta+u\cdot\nabla|D|^s\theta=-[|D|^s,u\cdot\nabla]\theta.
\end{split}
\end{array}
\right.
\end{equation}
The follow lemma gives the $H^s~(s>1)$ estimate of $(\omega,\theta)$.
\begin{lemma}
Assume $\omega_0\in \dot{W}^{1,p}\cap H^s$ and $\theta_0\in \dot{W}^{1,p}\cap H^s$~$(2<p<\infty, s>1)$, then the solution $(\omega,\theta)$ satisfies
\begin{equation}
\begin{split}
\||D|^s\omega\|_{L^\infty_t(L^2)}^2+\||D|^s\theta\|_{L^\infty_t(L^2)}^2+\|\partial_1|D|^s\omega\|_{L^2_t(L^2)}^2\leq C.
\end{split}
\end{equation}
\end{lemma}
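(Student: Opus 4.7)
The plan is to perform $L^2$-based energy estimates directly on the system \eqref{Hs-system} satisfied by $(|D|^s\omega,|D|^s\theta)$, control the commutator terms by the Kato--Ponce inequality of Lemma \ref{kato-ponce} together with Biot--Savart, use the regularity already established in Lemma \ref{lipschitz-information} and Lemma \ref{w1p-omega-theta}, and close the bound by Gr\"onwall's inequality.

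First, I would take the $L^2$ inner product of the first equation of \eqref{Hs-system} with $|D|^s\omega$. Because $u$ is divergence-free, the transport term vanishes; the diffusion yields $\|\pa_1|D|^s\omega\|_{L^2}^2$, and the forcing $\int \pa_1|D|^s\theta\cdot|D|^s\omega\,dx$ is absorbed after integration by parts into half of the dissipation plus a multiple of $\||D|^s\theta\|_{L^2}^2$. The analogous step for the second equation of \eqref{Hs-system} tested with $|D|^s\theta$ has no parabolic smoothing to exploit, so the right-hand side must be absorbed using $\|\nabla u\|_{L^\infty}$ and quantities already controlled. In both cases the only non-trivial contribution is the commutator $\int [|D|^s,u\cdot\nabla]f\cdot |D|^sf\,dx$, which I would rewrite as $[|D|^s,u^i]\pa_if$ (since $\pa_i$ commutes with $|D|^s$) and estimate by \eqref{kato-ponce1}:
\begin{equation*}
\|[|D|^s,u^i]\pa_if\|_{L^2}\le C\bigl(\|\nabla u\|_{L^{p_1}}\||D|^{s-1}\nabla f\|_{L^{p_2}}+\||D|^su\|_{L^{p_3}}\|\nabla f\|_{L^{p_4}}\bigr).
\end{equation*}

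I would choose the indices $(p_1,p_2)=(p,\tfrac{2p}{p-2})$ and $(p_3,p_4)=(\tfrac{2p}{p-2},p)$, so that $\|\nabla u\|_{L^p}\le C\|\omega\|_{L^p}$ and $\|\nabla f\|_{L^p}$ are both bounded by Lemma \ref{w1p-omega-theta}. The remaining $L^{2p/(p-2)}$ norms I would control by Gagliardo--Nirenberg interpolation between the corresponding $L^p$ norms (already bounded) and the $L^2$ norms of $|D|^s$ of the relevant quantity; the Biot--Savart bound $\||D|^su\|_{L^q}\lesssim\||D|^{s-1}\omega\|_{L^q}$ expresses the $|D|^su$ factor through $\omega$. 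Summing the $\omega$ and $\theta$ energy inequalities and using Young's inequality to split the mixed half-powers, I expect to arrive at
\begin{equation*}
\frac{d}{dt}\bigl(\||D|^s\omega\|_{L^2}^2+\||D|^s\theta\|_{L^2}^2\bigr)+\|\pa_1|D|^s\omega\|_{L^2}^2\le C(t)\bigl(1+\|\nabla u\|_{L^\infty}\bigr)\bigl(\||D|^s\omega\|_{L^2}^2+\||D|^s\theta\|_{L^2}^2\bigr).
\end{equation*}
Since $\nabla u\in L^2_tL^\infty_x\subset L^1_tL^\infty_x$ by Lemma \ref{lipschitz-information} and Cauchy--Schwarz, Gr\"onwall's lemma then yields the asserted bound.

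The main obstacle will be handling the commutator $[|D|^s,u\cdot\nabla]\theta$ in the temperature equation, since there is no dissipation available to soak up residual high-frequency pieces. The key idea is to systematically put the top-order derivatives on $u$ via Biot--Savart (thereby trading them against $\omega$, which enjoys the dissipative control $\pa_1\nabla\omega\in L^2_tL^2_x$) and to leave only $L^p$ norms of $\nabla\theta$ on the $\theta$ side, which Lemma \ref{w1p-omega-theta} already pins down. In this way the $\theta$-commutator produces only prefactors $\|\nabla u\|_{L^\infty}\||D|^s\theta\|_{L^2}^2$ that are closable by Gr\"onwall, together with cross-terms involving $\||D|^s\omega\|_{L^2}^2$ that are harmlessly absorbed into the $\omega$-energy or the parabolic dissipation via Young's inequality.
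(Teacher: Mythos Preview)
Your overall architecture is the paper's, and your choice for the second pair of exponents $(p_3,p_4)=(\tfrac{2p}{p-2},p)$ matches it: $\|\nabla f\|_{L^p}$ is bounded by Lemma~\ref{w1p-omega-theta}, and $\||D|^su\|_{L^{2p/(p-2)}}$ reduces via Biot--Savart and Sobolev to $\||D|^{s-1+2/p}\omega\|_{L^2}\leq C(1+\||D|^s\omega\|_{L^2})$.

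The gap is your first pair $(p_1,p_2)=(p,\tfrac{2p}{p-2})$. That choice produces the factor $\||D|^{s-1}\nabla f\|_{L^{2p/(p-2)}}\sim\||D|^sf\|_{L^{2p/(p-2)}}$, whose Sobolev scaling index $s-1+2/p$ lies strictly \emph{above} the index $s-1$ of the energy $\||D|^sf\|_{L^2}$ you are trying to propagate. The Gagliardo--Nirenberg interpolation you propose, between $\|\nabla f\|_{L^p}$ (index $1-2/p$) and $\||D|^sf\|_{L^2}$ (index $s-1$), can reach index $s-1+2/p$ only with convexity parameter $\theta=\tfrac{s-2+4/p}{s-2+2/p}$, and this lies in $[0,1]$ iff $s<2-4/p$; in particular it fails for every $p\in(2,4]$, and for $s$ close to $2$ regardless of $p$. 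Since the temperature equation carries no dissipation, the problematic term for $f=\theta$ cannot be absorbed into $\|\pa_1|D|^s\omega\|_{L^2}^2$ either.

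The fix is exactly the paper's: take $(p_1,p_2)=(\infty,2)$, exploiting the Lipschitz control on $u$ from Lemma~\ref{lipschitz-information}. Then the first commutator piece is simply $\|\nabla u\|_{L^\infty}\||D|^{s-1}\nabla f\|_{L^2}\sim\|\nabla u\|_{L^\infty}\||D|^sf\|_{L^2}$, which Gr\"onwall handles. Your own last paragraph already anticipates this endgame --- the prefactor $\|\nabla u\|_{L^\infty}\||D|^s\theta\|_{L^2}^2$ you aim for --- but that prefactor only emerges with $(p_1,p_2)=(\infty,2)$, not with the pair you declared.
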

\begin{proof}
Taking $L^2$ inner product with $(|D|^s\omega,|D|^s\theta)$ and adding them up, we have
\begin{equation}
\label{Hs--omega-theta}
\begin{split}
&\quad\frac12\frac{d}{dt}(\||D|^s\omega(t)\|_{L^2}^2+\||D|^s\theta(t)\|_{L^2}^2)+\|\partial_1|D|^s\omega\|_{L^2}^2\\
&=\int_{\R^2}\partial_1|D|^s\theta |D|^s\omega~dx
-\int_{\R^2}[|D|^s,u\cdot \nabla] \omega |D|^s\omega~dx
-\int_{\R^2}[|D|^s,u\cdot \nabla] \theta |D|^s\theta~dx\\
&\triangleq K_1+K_2+K_3.
\end{split}
\end{equation}
For $K_1$, after integration by part and Young's inequality,
\begin{equation}
\label{K1}
K_1\leq \frac12\||D|^s\theta\|_{L^2}+\frac12\|\partial_1|D|^s\omega\|_{L^2}.
\end{equation}
For $K_2$, by virtue of the H\"older inequality and \eqref{kato-ponce1} in Lemma \ref{kato-ponce},
\begin{equation*}
\begin{split}
K_2&\leq \|[|D|^s,u\cdot \nabla] \omega\|_{L^2}\||D|^s\omega\|_{L^2}\\
&\leq C (\|\nabla u\|_{L^\infty}\||D|^{s-1}\nabla\omega\|_{L^2}+\||D|^s u\|_{L^p}\|\nabla\omega\|_{L^{p'}})\||D|^s\omega\|_{L^2},
\end{split}
\end{equation*}
with $\frac1p+\frac{1}{p'}=\frac12$, $p\in(2,\infty)$. By interpolation
\begin{equation*}
\begin{split}
\|f\|_{L^p}\leq C\|f\|_{L^2}^{\frac2p}\|\nabla f\|_{L^2}^{1-\frac{2}{p}},
\end{split}
\end{equation*}
then we can obtain
\begin{equation*}
\begin{split}
\||D|^s u\|_{L^p}\leq C\||D|^s u\|_{L^2}^{\frac2p}\|\nabla |D|^s u\|_{L^2}^{1-\frac{2}{p}}\leq C(\|u\|_{L^2}+\||D|^s\omega\|_{L^2}).
\end{split}
\end{equation*}
Thus we have
\begin{equation}
\label{K2}
\begin{split}
K_2\leq C (\|\nabla u\|_{L^\infty}+\|\nabla\omega\|_{L^{p'}})\times(\||D|^s\omega\|_{L^2}^2+1).
\end{split}
\end{equation}
Similarly,
\begin{equation}
\label{K3}
\begin{split}
K_3\leq C (\|\nabla u\|_{L^\infty}+\|\nabla\theta\|_{L^{p'}})\times(\||D|^s\omega\|_{L^2}^2+\||D|^s\theta\|_{L^2}^2+1).
\end{split}
\end{equation}
Inserting \eqref{K1}, \eqref{K2} and \eqref{K3} into \eqref{Hs--omega-theta}, making use of Lemma \ref{w1p-omega-theta} and Gr\"onwall's Lemma, we can
deduce
\begin{equation*}
\begin{split}
\||D|^s\omega\|_{L^\infty_t(L^2)}^2+\||D|^s\theta\|_{L^\infty_t(L^2)}^2+\|\partial_1|D|^s\omega\|_{L^2_t(L^2)}^2\leq C,
\end{split}
\end{equation*}
which complete the proof of this lemma.
\end{proof}

\begin{subsection}{A priori estimates for the higher order striated regularity}
~\\
In this subsection, we will give the higher order estimates of the
vector field $X$. The first  lemma asserts the $H^1$ estimate of $X$.
\begin{lemma}
Let $\omega_0\in H^1, \theta_0\in H^1$ and $X_0\in H^1$, then we have
\begin{equation}
\label{H1X}
\begin{split}
\|\nabla X\|_{L^\infty_t({L^2})}^2
\leq C.
\end{split}
\end{equation}
\end{lemma}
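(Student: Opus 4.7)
The plan is to adapt the standard $H^1$ energy estimate for the transport equation \eqref{X-eq}, closing the estimate by combining the Lipschitz bound $\nabla u\in L^1_t(L^\infty)$ from Lemma \ref{lipschitz-information} with the higher integrability of $\nabla\omega$ obtained in Lemma \ref{w1p-omega-theta}. First I would apply $\partial_k$ ($k=1,2$) to \eqref{X-eq} to get
\begin{equation*}
\partial_t\pa_k X+u\cdot\nabla\pa_kX=\pa_kX\cdot\nabla u+X\cdot\nabla\pa_ku-\pa_ku\cdot\nabla X,
\end{equation*}
take the $L^2$ inner product with $\pa_kX$, and sum over $k$; the convection term disappears by $\nabla\cdot u=0$. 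The two commutator-type terms involving $\nabla X\otimes\nabla u$ are immediately bounded by $C\|\nabla u\|_{L^\infty}\|\nabla X\|_{L^2}^2$, which is harmless because $\|\nabla u\|_{L^\infty}\in L^1_t$.

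The main obstacle is the forcing term $\int X\cdot\nabla\pa_ku\,\pa_kX\,dx$, where a brutal H\"older estimate would require $X\in L^\infty$, while the assumptions only furnish $X_0\in H^1$ (hence no embedding into $L^\infty$ in dimension two). To bypass this, I would split using H\"older with $p=4$:
\begin{equation*}
\Bigl|\int_{\R^2} X\cdot\nabla\pa_ku\,\pa_kX\,dx\Bigr|\le \|X\|_{L^4}\|\nabla^2 u\|_{L^4}\|\nabla X\|_{L^2}.
\end{equation*}
The factor $\|\nabla^2 u\|_{L^4}$ is controlled by $\|\nabla\omega\|_{L^4}$ through Calder\'on--Zygmund estimates, which is bounded by Lemma \ref{w1p-omega-theta} applied with $p=4$ (note $\nabla\omega_0\in L^2\cap H^{s-1}\hookrightarrow L^4$ in 2D when $s$ is taken large enough, or directly from $\omega_0\in H^s$ with $s>1$ under the standing assumptions). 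For $\|X\|_{L^4}$, I would establish an $L^4$-analogue of Lemma \ref{L-infty-X}: testing \eqref{X-eq} with $|X|^2 X$ and using $\nabla\cdot u=0$ yields
\begin{equation*}
\|X(t)\|_{L^4}\le \|X_0\|_{L^4}\exp\Bigl(\int_0^t\|\nabla u(\tau)\|_{L^\infty}\,d\tau\Bigr),
\end{equation*}
which is finite since $X_0\in H^1\hookrightarrow L^4$ in $\R^2$ and $\nabla u\in L^1_t(L^\infty)$.

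Putting these pieces together gives a differential inequality of the form
\begin{equation*}
\frac{d}{dt}\|\nabla X(t)\|_{L^2}^2\le C\|\nabla u(t)\|_{L^\infty}\|\nabla X(t)\|_{L^2}^2+C(t)\|\nabla X(t)\|_{L^2},
\end{equation*}
with $C(t)$ bounded on any finite interval. Grönwall's lemma, together with the already-established bound on $\int_0^t\|\nabla u(\tau)\|_{L^\infty}d\tau$ from Lemma \ref{lipschitz-information}, closes the estimate and yields $\|\nabla X\|_{L^\infty_t(L^2)}\le C$, proving \eqref{H1X}. The genuine difficulty is therefore solely the treatment of $X\cdot\nabla^2u$; everything else is a direct transcription of the arguments used for Lemmas \ref{L-infty-X} and \ref{Hs-X}.
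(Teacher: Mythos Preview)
Your overall strategy---differentiate \eqref{X-eq}, energy-estimate, and isolate the forcing term $\int X\cdot\nabla\partial_ku\cdot\partial_kX\,dx$---matches the paper exactly, and the treatment of the two $\nabla X\otimes\nabla u$ terms is identical. The divergence is in how you control the dangerous term, and there your argument does not quite close under the hypotheses actually stated in the lemma.

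You bound the forcing by $\|X\|_{L^4}\|\nabla^2 u\|_{L^4}\|\nabla X\|_{L^2}$ and then invoke Lemma~\ref{w1p-omega-theta} with $p=4$ to control $\|\nabla\omega\|_{L^4}$. But Lemma~\ref{w1p-omega-theta} \emph{assumes} $\nabla\omega_0,\nabla\theta_0\in L^4$, whereas the present lemma only grants $\omega_0,\theta_0\in H^1$, i.e.\ $\nabla\omega_0,\nabla\theta_0\in L^2$. Your parenthetical fix (``$\omega_0\in H^s$ with $s>1$'') is not among the hypotheses here, and even if it were, the embedding $H^{s-1}(\R^2)\hookrightarrow L^4$ needs $s\ge 3/2$. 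In the context of the full Theorem~\ref{theorem-model2} one does eventually assume $\omega_0\in\dot W^{1,p}$, so your route would suffice for the paper's ultimate purpose---but it proves a strictly weaker lemma.

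The paper avoids this by an anisotropic H\"older splitting that uses only what the preceding $H^1$ lemma already supplies, namely $\nabla\omega\in L^\infty_t(L^2)$ and $\partial_1\nabla\omega\in L^2_t(L^2)$:
\[
\int_{\R^2} X\cdot\nabla\partial_ku\cdot\partial_kX\,dx
\le C\|X\|_{L^\infty_{x_2}(L^2_{x_1})}\|\nabla\partial_k u\|_{L^2_{x_2}(L^\infty_{x_1})}\|\partial_kX\|_{L^2},
\]
followed by the one-dimensional interpolation estimates $\|X\|_{L^\infty_{x_2}(L^2_{x_1})}\le\|X\|_{L^2}^{1/2}\|\partial_2X\|_{L^2}^{1/2}$ and $\|\nabla\partial_k u\|_{L^2_{x_2}(L^\infty_{x_1})}\le\|\nabla\omega\|_{L^2}^{1/2}\|\partial_1\nabla\omega\|_{L^2}^{1/2}$. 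This exploits precisely the horizontal smoothing from the anisotropic viscosity and closes under $H^1$ data alone.
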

\begin{proof}
Applying $\partial_k~(k=1,2)$ to the first equation of \eqref{X-eq}, we can obtain $\partial_k X$ satisfies
\begin{equation}
\label{nablaX}
\begin{split}
\partial_t \partial_kX+u\cdot\nabla \partial_kX+ \partial_ku\cdot\nabla X=\partial_k\partial_X u
\end{split}
\end{equation}
Multiplying $\partial_k X$ to \eqref{nablaX} and integrating over $\R^2$ with respect to $x$, we have
\begin{equation}
\label{H^1X}
\begin{split}
\frac12\frac{d}{dt}\|\partial_kX(t)\|_{L^2}^2
&=\int_{\R^2}\partial_k\partial_Xu\cdot \partial_kX~dx
-\int_{\R^2}\partial_ku\cdot \nabla X\cdot \partial_kX~dx\\
&=\int_{\R^2}\partial_kX\cdot\nabla u\cdot \partial_kX~dx+\int_{\R^2}X\cdot\nabla \partial_ku\cdot \partial_kX~dx\\
&\quad-\int_{\R^2}\partial_ku\cdot \nabla X\cdot \partial_kX~dx\\
&\triangleq B_1+B_2+B_3.
\end{split}
\end{equation}
By H\"older inequality, $B_1$ can be bounded by
\begin{equation}
\label{B1}
\begin{split}
B_1= \int_{\R^2}\partial_kX\cdot\nabla u\cdot \partial_kX~dx\leq\|\nabla u\|_{L^\infty}\|\nabla X\|_{L^2}^2.
\end{split}
\end{equation}
Similarly,
\begin{equation}
\label{B3}
\begin{split}
B_3= -\int_{\R^2}\partial_ku\cdot \nabla X\cdot \partial_kX~dx\leq\|\nabla u\|_{L^\infty}\|\nabla X\|_{L^2}^2.
\end{split}
\end{equation}
Then by virtue of anisotropic H\"older inequality,
\begin{equation}
\label{B2}
\begin{split}
B_2&= \int_{\R^2}X\cdot\nabla \partial_ku\cdot \partial_kX~dx\\
&\leq C\|X\|_{L^\infty_{x_2}(L^2_{x_1})}\|\partial_k\nabla u\|_{L^2_{x_2}(L^\infty_{x_1})}
\|\partial_kX\|_{L^2(\R^2)}\\
&\leq C\|X\|_{L^2}^\frac12\|\partial_2X\|_{L^2}^\frac12\|\nabla \omega\|_{L^2}^\frac12\|\partial_1\nabla \omega\|_{L^2}^\frac12
\|\partial_kX\|_{L^2}\\
&\leq C(\|\nabla \omega\|_{L^2}+\|\partial_1\nabla \omega\|_{L^2})\times(\|X\|_{L^2}^2+\|\nabla X\|_{L^2}^2).
\end{split}
\end{equation}
After substituting \eqref{B1}, \eqref{B2} and \eqref{B3} into \eqref{H^1X}, we find that
\begin{equation}
\label{H^1estimateX}
\begin{split}
\frac{d}{dt}\|\nabla X(t)\|_{L^2}^2
\leq C(\|\nabla u\|_{L^\infty}+\|\nabla \omega\|_{L^2}+\|\partial_1\nabla \omega\|_{L^2})\times(\|X\|_{L^2}^2+\|\nabla X\|_{L^2}^2).
\end{split}
\end{equation}
Combining the estimates \eqref{X-Lr-estimate} and \eqref{H^1estimateX}, using Gronwall's Lemma and by Lemma \ref{lipschitz-information} and Lemma \ref{H1--omega}, we can deduce
\begin{equation*}
\begin{split}
\|\partial_X u\|_{L^\infty_t({L^2})}^2+
\|\partial_1\partial_X u\|_{L^2_t({L^2})}^2+
\|\nabla X\|_{L^\infty_t({L^2})}^2
\leq C,
\end{split}
\end{equation*}
which complete the proof of this lemma.
\end{proof}

The next lemma shows the $H^s~(s>1)$ estimate for $X$.
\begin{lemma}
Assume $\omega_0\in\dot{ W}^{1,p}\cap H^s, \theta_0\in \dot{ W}^{1,p}\cap H^s$, $X_0\in H^s$~$(2<p<\infty, s>1)$, then we have
\begin{equation}
\label{HsX}
\begin{split}
\||D|^sX\|_{L^\infty_t(L^2)}^2\leq C.
\end{split}
\end{equation}
\end{lemma}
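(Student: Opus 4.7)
The strategy closely mirrors the $H^s$ estimate for $X$ in Lemma \ref{Hs-X}, but since we now work at the regularity level $s>1$, a direct appeal to Lemma \ref{lemA.1} (which is stated for $s\in(0,1)$) is no longer available. Instead, I would apply the operator $|D|^s$ to equation \eqref{X-eq}, take the $L^2$ inner product with $|D|^s X$, and use the divergence-free condition on $u$ to eliminate the transport contribution. This yields
\begin{equation*}
\frac{1}{2}\frac{d}{dt}\||D|^s X(t)\|_{L^2}^2 = -\int_{\R^2}[|D|^s, u\cdot\nabla]X\cdot |D|^s X\,dx + \int_{\R^2}|D|^s(\partial_X u)\cdot |D|^s X\,dx,
\end{equation*}
so the work reduces to estimating a commutator term and a stretching term.

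For the commutator, I would invoke the Kato--Ponce estimate \eqref{kato-ponce1} with $p_1=\infty$, $p_2=2$ and $p_3=p$, $p_4=p'$, where $\frac{1}{p}+\frac{1}{p'}=\frac{1}{2}$, to obtain
\begin{equation*}
\|[|D|^s, u\cdot\nabla]X\|_{L^2}\leq C\bigl(\|\nabla u\|_{L^\infty}\||D|^{s-1}\nabla X\|_{L^2} + \||D|^s u\|_{L^p}\|\nabla X\|_{L^{p'}}\bigr).
\end{equation*}
The first summand simply contributes $\|\nabla u\|_{L^\infty}\|X\|_{H^s}^2$. For the second, $\||D|^s u\|_{L^p}$ is controlled by $\|u\|_{L^2}+\||D|^s \omega\|_{L^2}$ through the same Gagliardo--Nirenberg interpolation used in the preceding $H^s$ estimate for $(\omega,\theta)$, combined with the Biot--Savart law \eqref{biot-savart}; both quantities are bounded thanks to the previous lemma. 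The factor $\|\nabla X\|_{L^{p'}}$ is controlled by interpolating between $\|\nabla X\|_{L^2}$, already handled by \eqref{H1X}, and $\||D|^s X\|_{L^2}$, which is the quantity being estimated.

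For the stretching term I would use the fractional Leibniz rule \eqref{kato-ponce2} applied to $\partial_X u = X\cdot\nabla u$:
\begin{equation*}
\||D|^s (X\cdot\nabla u)\|_{L^2}\leq C\bigl(\|X\|_{L^\infty}\||D|^s\nabla u\|_{L^2} + \||D|^s X\|_{L^2}\|\nabla u\|_{L^\infty}\bigr).
\end{equation*}
Since $s>1$ in dimension two, the Sobolev embedding $H^s\hookrightarrow L^\infty$ together with Lemma \ref{L-infty-X} guarantees that $\|X\|_{L^\infty}$ remains uniformly bounded, while $\||D|^s\nabla u\|_{L^2}$ is dominated by $\||D|^s\omega\|_{L^2}$ via Biot--Savart, already controlled by the preceding $H^s$ estimate.

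Collecting these pieces, the differential inequality takes the schematic form
\begin{equation*}
\frac{d}{dt}\||D|^s X(t)\|_{L^2}^2 \leq C\bigl(1+\|\nabla u\|_{L^\infty}+\|\omega\|_{H^s}+\|\nabla X\|_{L^2}\bigr)\bigl(1+\||D|^s X(t)\|_{L^2}^2\bigr),
\end{equation*}
and Gr\"onwall's lemma, combined with $\nabla u\in L^1_t(L^\infty)$ (a consequence of Lemma \ref{lipschitz-information} on any finite interval) and $\omega\in L^\infty_t(H^s)$, closes the estimate. The main technical obstacle is making the Kato--Ponce commutator estimate fit at the threshold $s>1$: one must choose the exponents $p,p'$ so that the Biot--Savart bound on $\||D|^s u\|_{L^p}$ and the interpolation bound on $\|\nabla X\|_{L^{p'}}$ both produce a factor $\||D|^s X\|_{L^2}$ with a companion that is integrable in time. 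Once that balance is set, the remaining steps are routine.
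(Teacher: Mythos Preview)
Your proposal is correct and follows essentially the same route as the paper: apply $|D|^s$ to \eqref{X-eq}, split into a commutator term and a stretching term, estimate both via the Kato--Ponce inequalities \eqref{kato-ponce1}--\eqref{kato-ponce2} with the same exponent choices, and close by Gr\"onwall using the $H^s$ bound on $\omega$ and the Lipschitz bound on $u$. The only cosmetic difference is that the paper chooses $p$ so that the Sobolev embedding $\dot H^{s-1}\hookrightarrow L^{p'}$ gives $\|\nabla X\|_{L^{p'}}\leq C\||D|^s X\|_{L^2}$ directly, whereas you interpolate against $\|\nabla X\|_{L^2}$; both lead to the same closed inequality.
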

\begin{proof}
Applying $|D|^s$ to the first equation of \eqref{X-eq}, making use of the definition of commutator, we can obtain $|D|^sX$ satisfies the follow equation
\begin{equation}
\label{Lamda^sX-eq}
\begin{split}
\partial_t|D|^sX+u\cdot\nabla|D|^sX=-[|D|^s,u\cdot\nabla]X+|D|^s(X\cdot\nabla u).
\end{split}
\end{equation}
Taking $L^2$ inner product with $|D|^sX$,
\begin{equation}
\label{Hs--X-1}
\begin{split}
\frac12\frac{d}{dt}(\||D|^sX(t)\|_{L^2}^2
&=-\int_{\R^2}[|D|^s,u\cdot\nabla]X\cdot|D|^sX~dx
+\int_{\R^2}|D|^s(X\cdot\nabla u)\cdot|D|^sX~dx\\
&\triangleq M_1+M_2.
\end{split}
\end{equation}
For $M_1$, by H\"older inequality and \eqref{kato-ponce1} in Lemma \ref{kato-ponce},
\begin{equation*}
\begin{split}
M_1&\leq \|[|D|^s,u\cdot \nabla] X\|_{L^2}\||D|^sX\|_{L^2}\\
&\leq C (\|\nabla u\|_{L^\infty}\||D|^{s-1}\nabla X\|_{L^2}+\||D|^s u\|_{L^p}\|\nabla X\|_{L^{p'}})\||D|^s X\|_{L^2},
\end{split}
\end{equation*}
with $\frac1p+\frac{1}{p'}=\frac12$, $p\in(2,\infty)$. Choosing $p$ such that
\begin{equation*}
\begin{split}
\|\nabla X\|_{L^{p'}}\leq C\||D|^{s}X\|_{L^2},
\end{split}
\end{equation*}
and noticing that by interpolation
\begin{equation*}
\begin{split}
\||D|^s u\|_{L^p}\leq C\||D|^{s}u\|_{L^2}^\frac2p\||D|^{s+1}u\|_{L^2}^{1-\frac{2}{p}}\leq C(\|u\|_{L^2}+\||D|^{s}\omega\|_{L^2}),
\end{split}
\end{equation*}
then we have
\begin{equation}
\label{M1}
\begin{split}
M_1\leq C (\|\nabla u\|_{L^\infty}+\||D|^s\omega\|_{L^{2}}+1)\||D|^sX\|_{L^2}^2.
\end{split}
\end{equation}
Next we estimate $M_2$, making use of the H\"older inequality and inequality \eqref{kato-ponce2} in Lemma \ref{kato-ponce},
\begin{equation*}
\begin{split}
M_2&\leq \||D|^s(X\cdot \nabla u)\|_{L^2}\||D|^sX\|_{L^2}\\
&\leq C (\|X\|_{L^\infty}\||D|^{s}\nabla u\|_{L^2}+\||D|^s X\|_{L^2}\|\nabla u\|_{L^{\infty}})\||D|^s X\|_{L^2}.
\end{split}
\end{equation*}
By Sobolev embedding,
\begin{equation*}
\begin{split}
\|X\|_{L^\infty}\leq C\||D|^s X\|_{L^2},~~~~~~~~\text{for}~s>1.
\end{split}
\end{equation*}
Thus we have
\begin{equation}
\label{M2}
\begin{split}
M_2\leq C (\||D|^{s}\omega\|_{L^2}+\|\nabla u\|_{L^{\infty}})\||D|^s X\|_{L^2}^2.
\end{split}
\end{equation}
Inserting \eqref{M1} and \eqref{M2} into \eqref{Hs--X-1}, using Gr\"onwall's Lemma, we can
deduce
\begin{equation*}
\begin{split}
\||D|^sX\|_{L^\infty_t(L^2)}^2\leq C,
\end{split}
\end{equation*}
which complete the proof of this lemma.
\end{proof}
\subsection{The temperature patch problem}
This subsection is devoted to the proof of Corollary \ref{cor-2}. Because most of the proof is the same to Corollary \ref{cor-1}, here we just need to verify the inequality \eqref{distance-temperature-patch}. 
Choosing arbitrary two points that $x_1\in D_{\varepsilon}^-$, $x_2\in D_{\varepsilon}^+$, consider the difference
\begin{equation}
\label{dis-1}
|\psi(x_1,t)-\psi(x_2,t)|\leq \|\nabla\psi\|_{L^\infty} |x_1-x_2|, \quad\text{for~~any~~}t>0.
\end{equation}
Noticing that from \eqref{flow-map}, we have
\begin{equation}
\label{estimate-flow}
\|\nabla\psi\|_{L^\infty}\leq e^{\int_0^t\|\nabla u(\tau)\|_{L^\infty}~d\tau}.
\end{equation}
Then inserting the estimate \eqref{estimate-flow} into \eqref{dis-1} and taking infimum of $x_1, x_2$, we can obtain 
\begin{equation*}
|d(t)|\leq 2\varepsilon e^{\int_0^t\|\nabla u(\tau)\|_{L^\infty}d\tau}.
\end{equation*}
which is the desired bounded \eqref{distance-temperature-patch}.

\end{subsection}

\end{section}

\begin{appendices}

\section{Appendix}

The goal of this appendix is to give the proof of
Lemma \ref{lemA.1} and Lemma \ref{tran}.
\begin{proof}[Proof of Lemma  \ref{lemA.1}]
The proof of \eqref{lem1eq1} can be found in \cite{DP1} which used the standard Bony's decomposition (see \cite{Chemin-perfect,BCD}). Here we focus on proving \eqref{lem1eq} using the anisotropic idea. Firstly, we divide the first term of \eqref{lem1eq} into two terms,
\begin{equation*}
\begin{split}
-\int_{\R^2}\Delta_q(u\cdot \nabla f)\Delta_q f~dx&=-\int_{\R^2}\Delta_q(u^1\partial_1 f)\Delta_q f~dx
-\int_{\R^2}\Delta_q(u^2\partial_2 f)\Delta_q f~dx\\
&\triangleq P+Q.
\end{split}
\end{equation*}
For $P$, by Bony's decomposition, we can divide it into the following three terms,
\begin{equation}
\label{eqlem21}
\begin{split}
&-\int_{\R^2}\Delta_q(u^1 \partial_1 f)\Delta_q f~dx\\
=&-\sum_{|k-q|\leq2}\int_{\R^2}\Delta_q(S_{k-1}u^1 \Delta_k\partial_1 f)\Delta_q f~dx\\
&-\sum_{|k-q|\leq2}\int_{\R^2}\Delta_q(\Delta_ku^1 S_{k-1}\partial_1  f)\Delta_q f~dx\\
&-\sum_{k\geq q-1}\sum_{|k-l|\leq1}\int_{\R^2}\Delta_q(\Delta_ku^1  \Delta_l\partial_1 f)\Delta_q f~dx\\
\triangleq& P_1+P_2+P_3.
\end{split}
\end{equation}
For $P_1$, we can rewrite it as
\begin{equation*}
\begin{split}
P_1&=-\sum_{|q-k|\leq2}\int_{\R^2}\Delta_q(S_{k-1}u^1\partial_1\Delta_k f)\Delta_q f~dx\\
&=-\sum_{|q-k|\leq2}\int_{\R^2} [\Delta_q, S_{k-1}u^1\partial_1]\Delta_k f \Delta_q f~dx\\
&\quad-\sum_{|q-k|\leq2}\int_{\R^2}S_{k-1}u^1\partial_1\Delta_q\Delta_k f \Delta_q f~dx\\
&=-\sum_{|q-k|\leq2}\int_{\R^2}[\Delta_q, S_{k-1}u^1\partial_1]\Delta_k f \Delta_q f~dx\\
&\quad-\sum_{|q-k|\leq2}\int_{\R^2}(S_{k-1}u^1-S_qu^1)\partial_1\Delta_q \Delta_kf\Delta_q f~dx\\
&\quad-\int_{\R^2}{S}_{q}u^1\partial_1\Delta_q f\Delta_q f~dx\\
&\triangleq P_{11}+P_{12}+P_{13},
\end{split}
\end{equation*}
where we have used the fact $\sum_{|q-k|\leq2}\partial_1\Delta_q\Delta_k f=\Delta_q f$. For $P_{11}$, by H\"older inequality,
\begin{equation*}
\begin{split}
|P_{11}|&\leq\sum_{|q-k|\leq2}\bigg|\int_{\R^2}[\Delta_q, S_{k-1}u^1\partial_1]\Delta_k f\Delta_k f~dx\bigg|\\
&\leq C\sum_{|q-k|\leq2}\|[\Delta_q, S_{k-1}u^1\partial_1]\Delta_k f\|_{L^2}\|\Delta_qf\|_{L^2}.
\end{split}
\end{equation*}
According to the definition of $\Delta_q$,
\begin{equation*}
\begin{split}
[\Delta_q, S_{k-1}u^1\partial_1]\Delta_k f&=\int_{\R^2}\phi_q(x-y)(S_{k-1}u^1(y)\partial_1\Delta_k f(y))~dy\\
&\quad\quad-S_{k-1}u^1(x)\int_{\R^d}\phi_q(x-y)\partial_1\Delta_k f(y)~dy\\
&=\int_{\R^2}\phi_q(x-y)(S_{k-1}u^1(y)-S_{k-1}u^1(x))\partial_1\Delta_k f(y)~dy\\
&=\int_{\R^2}\phi_q(x-y)\int_0^1(y-x)\cdot\nabla S_{k-1}u^1(sy+(1-s)x)~ds\partial_1\Delta_k f(y)~dy\\
&=\int_{\R^2}\int_0^1\phi_q(z)z\cdot\nabla S_{k-1}u^1(x-sz)\partial_1\Delta_k f(x-z)~ds dz,\\
\end{split}
\end{equation*}
where $\phi_j(x)\triangleq2^{jd}\mathcal{F}^{-1}(\phi)(2^jx)$.
Thus we have by H\"older inequality and Bernstein inequality,
\begin{equation*}
\begin{split}
\|[\Delta_q, S_{k-1}u^1\partial_1]\Delta_k f\|_{L^2}
&=\bigg\|\int_{\R^2}\int_0^1\phi_q(z)z\cdot\nabla S_{k-1}u^1(x-sz)\partial_1\Delta_k f(x-z)~ds dz\bigg\|_{L^2}\\
&\leq C\int_{\R^2}\big|\phi_q(z)\big||z|~dz\|\nabla{S}_{k-1}u^1(x-sz)\|_{L^\infty}\|\partial_1\Delta_k f(x-z)\|_{L^2}\\
&\leq C\int_{\R^2}\big|\phi_q(z)\big||z|~dz\|\nabla S_{k-1}u^1\|_{L^\infty}\|\partial_1\Delta_k f\|_{L^2}\\
&\leq C2^{-q}2^k\|\nabla S_{k-1}u^1\|_{L^2}\|\partial_1\Delta_k f\|_{L^2}\\
&\leq C2^{k-q}\|\omega\|_{L^2}\|\partial_1\Delta_k f\|_{L^2}.
\end{split}
\end{equation*}
Then we obtain
\begin{equation*}
\begin{split}
|P_{11}|&\leq C\sum_{|q-k|\leq2}\|[\Delta_q, S_{k-1}u^1\partial_1]\Delta_k f\|_{L^2}\|\Delta_qf\|_{L^2}\\
&\leq C\sum_{|q-k|\leq2}2^{k-q}\|\omega\|_{L^2}\|\partial_1\Delta_k f\|_{L^2}\|\Delta_q f\|_{L^2}\\
&\leq Cb_q2^{2qs}\|\omega\|_{L^2}\|f\|_{H^s}\|\partial_1f\|_{H^s}.
\end{split}
\end{equation*}
For $P_{12}$, by H\"older inequality and Bernstein inequality,
\begin{equation*}
\begin{split}
|P_{12}|&=\sum_{|q-k|\leq2}\bigg|\int_{\R^2} ((S_{k-1}u^1-S_{q}u^1)\partial_1\Delta_q\Delta_k f)\Delta_q f~dx\bigg|\\
&\leq C\sum_{|q-k|\leq2}\|(S_{k-1}u^1-S_{q}u^1)\partial_1\Delta_q \Delta_kf\|_{L^1}\|\Delta_qf\|_{L^\infty}\\
&\leq C\sum_{|q-k|\leq2}\|\Delta_k u^1\|_{L^2}\|\Delta_q \Delta_k\partial_1 f\|_{L^2}2^q\|\Delta_qf\|_{L^2}.
\end{split}
\end{equation*}
For the case $k=-1$, by Bernstein inequality,
\begin{equation*}
\begin{split}
|P_{12}|
&\leq C\|\Delta_{-1} u^1\|_{L^2}2^{-1}\|\Delta_q \Delta_{-1} f\|_{L^2}2^{-1}\|\Delta_qf\|_{L^2}\\
&\leq Cb_q2^{2qs}\|u^1\|_{L^2}\|f\|_{H^s}^2.
\end{split}
\end{equation*}
For the case $k\geq0$, by Bernstein inequality,
\begin{equation*}
\begin{split}
|P_{12}|&\leq C\sum_{|q-k|\leq2}2^{-k}\|\nabla\Delta_k u^1\|_{L^2}\|\Delta_q \Delta_k\partial_1 f\|_{L^2}2^{q}\|\Delta_qf\|_{L^2}\\
&\leq C\sum_{|q-k|\leq2}2^{-k}\|\omega\|_{L^2}2^q\|\Delta_q\partial_1  f\|_{L^2}\|\Delta_qf\|_{L^2}\\
&\leq Cb_q2^{2qs}\|\omega\|_{L^2}\|f\|_{H^s}\|\partial_1f\|_{H^s}.
\end{split}
\end{equation*}
Thus,
\begin{equation}
\label{eqlem22}
\begin{split}
|P_{1}|\leq Cb_q2^{2qs}(\|u\|_{L^2}+\|\omega\|_{L^2})(\|f\|_{H^s}^2+\|f\|_{H^s}\|\partial_1f\|_{H^s}).
\end{split}
\end{equation}
For $P_2$, we can bound it by H\"older inequality that
\begin{equation*}
\begin{split}
|P_2|\leq C\sum_{|q-k|\leq2}\|\Delta_k u^1\|_{L^2}\|\partial_1 S_{k-1} f\|_{L^\infty}\|\Delta_qf\|_{L^2}.
\end{split}
\end{equation*}
Applying Bernstein inequality, similar as $P_{12}$,
\begin{equation}
\label{eqlem23}
\begin{split}
|P_2|&\leq C\sum_{|q-k|\leq2}\|\Delta_k u^1\|_{L^2}\sum_{m\leq k-2}2^m\|\Delta_m \partial_1f\|_{L^2}\|\Delta_qf\|_{L^2}\\
&\leq C \sum_{|q-k|\leq2}\|\Delta_k u^1\|_{L^2}\sum_{m\leq q-2}2^{m}\|\Delta_m f\|_{L^2}\|\Delta_qf\|_{L^2}\\
&\leq C \sum_{|q-k|\leq2}2^q\|\Delta_k u^1\|_{L^2}\sum_{m\leq q-2}2^{m-q}\|\Delta_m f\|_{L^2}\|\Delta_qf\|_{L^2}\\
&\leq C 2^{-qs}\sum_{|q-k|\leq2}2^{q-k}2^k\|\Delta_k u^1\|_{L^2}\sum_{m\leq q-2}2^{(m-q)(1-s)}2^{ms}\|\Delta_m f\|_{L^2}\|\Delta_qf\|_{L^2}\\
&\leq C b_q 2^{-2qs} (\|u^1\|_{L^2}+\|\omega\|_{L^2})\|f\|_{H^s}\|\partial_1f\|_{H^s},\\
\end{split}
\end{equation}
where we have used discrete Young's inequality in the last step.\\
Next we estimate $P_3$. By H\"older inequality and Bernstein inequality,
\begin{equation}
\label{eqlem24}
\begin{split}
|P_{3}|&\leq \bigg|\sum_{k\geq q-1}\sum_{|k-l|\leq1}\int_{\R^2} \Delta_q(\Delta_ku^1\partial_1\Delta_q f)\Delta_qf~dx\bigg|\\
&\leq C \sum_{k\geq q-1}\sum_{|k-l|\leq1}\|\Delta_q(\Delta_ku^1\Delta_l\partial_1 f)\|_{L^1}\|\Delta_qf\|_{L^\infty}\\
&\leq C 2^q\sum_{k\geq q-1}2^{-k}2^k\|\Delta_ku^1\|_{L^2}\|\Delta_k\partial_1f\|_{L^2}\|\Delta_qf\|_{L^2}\\
&\leq C2^{-qs} \sum_{k\geq q-1} 2^{(q-k)(1+s)}2^{ks}\|\Delta_k\partial_1f\|_{L^2}(\|u^1\|_{L^2}+\|\omega\|_{L^2})\|\Delta_qf\|_{L^2}\\
&\leq C b_q 2^{-2qs}(\|u^1\|_{L^2}+\|\omega\|_{L^2})\|f\|_{H^s}\|\partial_1f\|_{H^s},\\
\end{split}
\end{equation}
where discrete Young's inequality have been used in the last two line.\\
For $Q$, we can also divide it into three parts,
\begin{equation}
\label{eqlem31}
\begin{split}
-\int_{\R^2}\Delta_q(u^2 \partial_2 f)\Delta_q f~dx
=Q_1+Q_2+Q_3,
\end{split}
\end{equation}
with
\begin{equation*}
Q_1=-\sum_{|k-q|\leq2}\int_{\R^2}\Delta_q(S_{k-1}u^2 \Delta_k\partial_2 f)\Delta_q f~dx,
\end{equation*}
\begin{equation*}
Q_2=-\sum_{|k-q|\leq2}\int_{\R^2}\Delta_q(\Delta_ku^2 S_{k-1}\partial_2  f)\Delta_q f~dx
\end{equation*}
and
\begin{equation*}
Q_3-\sum_{k\geq q-1}\sum_{|k-l|\leq1}\int_{\R^2}\Delta_q(\Delta_ku^2  \Delta_l\partial_2 f)\Delta_q f~dx.
\end{equation*}
Similar as $P_1$, we can rewrite $Q_1$ as
\begin{equation*}
\begin{split}
Q_1&=-\sum_{|q-k|\leq2}\int_{\R^2}[\Delta_q, S_{k-1}u^2\partial_2]\Delta_k f \Delta_q f~dx\\
&\quad-\sum_{|q-k|\leq2}\int_{\R^2}(S_{k-2}u^1-S_qu^1)\partial_2\Delta_q \Delta_kf\Delta_q f~dx\\
&\quad-\int_{\R^2}{S}_{q}u^2\partial_2\Delta_q f\Delta_q f~dx\\
&\triangleq Q_{11}+Q_{12}+Q_{13}.
\end{split}
\end{equation*}
Here we should notice that $P_{13}+Q_{13}=0$ because of the divergence free condition of $u$, so we do not need to estimate these two terms.\\
For $Q_{11}$, by H\"older inequality,
\begin{equation*}
\begin{split}
|Q_{11}|&\leq\sum_{|q-k|\leq2}\bigg|\int_{\R^2}[\Delta_q, S_{k-1}u^1\partial_1]\Delta_k f\Delta_k f~dx\bigg|\\
&\leq C\sum_{|q-k|\leq2}\|[\Delta_q, S_{k-1}u^1\partial_1]\Delta_k f\|_{L^2}\|\Delta_qf\|_{L^2}.
\end{split}
\end{equation*}
According to the definition of $\Delta_q$ and similar as $P_{11}$,
\begin{equation*}
\begin{split}
[\Delta_q, S_{k-1}u^1\partial_1]\Delta_k f&=\int_{\R^2}\int_0^1\phi_q(z)z\cdot\nabla S_{k-1}u^2(x-sz)\partial_2\Delta_k f(x-z)~ds dz.\\
\end{split}
\end{equation*}
Making use of the anisotropic H\"older inequality and Bernstein inequality,
\begin{equation*}
\begin{split}
&~\quad\|[\Delta_q, S_{k-1}u^2\partial_2]\Delta_k f\|_{L^2}\\
&=\bigg\|\int_{\R^2}\int_0^1\varphi_q(z)z\cdot\nabla S_{k-1}u^2(x-sz)\partial_2\Delta_k f(x-z)~ds dz\bigg\|_{L^2}\\
&\leq C\int_{\R^2}\big|\varphi_q(z)\big||z|~dz\|\nabla{S}_{k-1}u^2(x-sz)\|_{L^\infty_{x_2}(L^2_{x_1})}\|\partial_2\Delta_k f(x-z)\|_{L^2_{x_2}(L^\infty_{x_1})}\\
&\leq C2^{-q}\|\nabla S_{k-1}u^2\|_{L^2}^{\frac12}\|\partial_2\nabla S_{k-1}u^2\|_{L^2}^{\frac12}\|\partial_2\Delta_k f\|_{L^2}^{\frac12}
\|\partial_1\partial_2\Delta_k f\|_{L^2}^{\frac12}.
\end{split}
\end{equation*}
Noticing that by Biot-Savart law  $u^2=\partial_1\Delta^{-1}\omega$, and combining with the boundedness of Riesz transform in $L^2$,
\begin{equation*}
\begin{split}
\quad\|[\Delta_q, S_{k-1}u^2\partial_2]\Delta_k f\|_{L^2}
&\leq C2^{k-q}\|\omega\|_{L^2}^{\frac12}\|\partial_2\nabla \partial_1\Delta^{-1}\omega\|_{L^2}^{\frac12}\|\Delta_k f\|_{L^2}^{\frac12}
\|\partial_1\Delta_k f\|_{L^2}^{\frac12}\\
&\leq C2^{k-q}\|\omega\|_{L^2}^{\frac12}\|\partial_1\omega\|_{L^2}^{\frac12}
\|\Delta_k f\|_{L^2}^{\frac12}
\|\partial_1\Delta_k f\|_{L^2}^{\frac12}.
\end{split}
\end{equation*}
Then $Q_{11}$ is bounded by
\begin{equation*}
\begin{split}
|Q_{11}|&\leq C\sum_{|q-k|\leq2}\|[\Delta_q, S_{k-1}u^2\partial_2]\Delta_k f\|_{L^2}\|\Delta_qf\|_{L^2}\\
&\leq C\sum_{|q-k|\leq2}2^{k-q}\|\omega\|_{L^2}^{\frac12}\|\partial_1\omega\|_{L^2}^{\frac12}
\|\Delta_k f\|_{L^2}^{\frac12}
\|\partial_1\Delta_k f\|_{L^2}^{\frac12}\|\Delta_qf\|_{L^2}\\
&\leq Cb_q2^{-2qs}\|\omega\|_{L^2}^{\frac12}\|\partial_1\omega\|_{L^2}^{\frac12}\|f\|_{H^s}^{\frac32}\|\partial_1f\|_{H^s}^{\frac12}.\\
\end{split}
\end{equation*}
For $Q_{12}$, by the anisotropic H\"older inequality and interpolation inequality,
\begin{equation*}
\begin{split}
|Q_{12}|&=\sum_{|q-k|\leq2}\bigg|\int_{\R^2} ((S_{k-1}u^2-S_{q}u^2)\partial_2\Delta_q\Delta_k f)\Delta_q f~dx\bigg|\\
&\leq C\sum_{|q-k|\leq2}\|(S_{k-1}u^2-S_{q}u^2)\partial_2\Delta_q \Delta_kf\|_{L^2}\|\Delta_qf\|_{L^2}\\
&\leq C\sum_{|q-k|\leq2}\|\Delta_k u^2\|_{L^\infty_{x_2}(L^2_{x_1})}\|\Delta_q \Delta_k\partial_2 f\|_{L^2_{x_2}(L^\infty_{x_1})}\|\Delta_qf\|_{L^2}\\
&\leq C\sum_{|q-k|\leq2}\|\Delta_k u^2\|_{L^2}^{\frac12}\|\Delta_k \partial_2u^2\|_{L^2}^{\frac12}\|\Delta_q \Delta_k\partial_2 f\|_{L^2}^{\frac12}\|\Delta_q \Delta_k\partial_1\partial_2 f\|_{L^2}^{\frac12}\|\Delta_qf\|_{L^2}\\
\end{split}
\end{equation*}
For the case $k=-1$, by Bernstein inequality,
\begin{equation*}
\begin{split}
|Q_{12}|
&\leq C\|\Delta_{-1} u^2\|_{L^2}\|\Delta_q \Delta_{-1} f\|_{L^2}\|\Delta_qf\|_{L^2}\\
&\leq Cb_q2^{-2qs}\|u^2\|_{L^2}\|f\|_{H^s}^2.
\end{split}
\end{equation*}
For the case $k\geq0$, by Bernstein inequality and the relation $u^2=\partial_1\Delta^{-1}\omega$,
\begin{equation*}
\begin{split}
|Q_{12}|&\leq C\sum_{|q-k|\leq2}2^{q-k}\|\nabla\Delta_k u^2\|_{L^2}^\frac12\|\nabla\Delta_k \partial_1\Delta^{-1}\omega\|_{L^2}^\frac12\|\Delta_q \partial_1 f\|_{L^2}^{\frac12}\|\Delta_qf\|_{L^2}^{\frac32}\\
&\leq C\sum_{|q-k|\leq2}2^{q-k}\|\omega\|_{L^2}^\frac12\|\partial_1\omega\|_{L^2}^\frac12
\|\Delta_q\partial_1  f\|_{L^2}^\frac12\|\Delta_qf\|_{L^2}^\frac32\\
&\leq Cb_q2^{-2qs}\|\omega\|_{L^2}^\frac12\|\partial_1\omega\|_{L^2}^\frac12\|f\|_{H^s}\|\partial_1f\|_{H^s}.
\end{split}
\end{equation*}
Thus,
\begin{equation}
\label{eqlem32}
\begin{split}
|Q_{1}|\leq Cb_q2^{-2qs}(\|u\|_{L^2}+\|\omega\|_{L^2}^\frac12\|\partial_1\omega\|_{L^2}^\frac12)(\|f\|_{H^s}^2+\|f\|_{H^s}\|\partial_1f\|_{H^s}).
\end{split}
\end{equation}
Similar as $Q_{12}$, applying anisotropic H\"older inequality and Bernstein inequality, $Q_{2}$ can be bounded by
\begin{equation}
\label{eqlem33}
\begin{split}
|Q_2|&\leq C\sum_{|q-k|\leq2}\|\Delta_k u^2\|_{L^\infty_{x_2}(L^2_{x_1})}\|\partial_2 S_{k-1} f\|_{L^2_{x_2}(L^\infty_{x_1})}\|\Delta_qf\|_{L^2}\\
&\leq C\sum_{|q-k|\leq2}\|\Delta_k u^2\|_{L^2}^\frac12
\|\Delta_k \partial_2u^2\|_{L^2}^\frac12
\|\partial_2 S_{k-1} f\|_{L^2}^\frac12\|\partial_1\partial_2 S_{k-1} f\|_{L^2}^\frac12\|\Delta_qf\|_{L^2}\\
&\leq Cb_q2^{-2qs}\|u\|_{L^2}\|f\|_{H^s}^2+C\|\omega\|_{L^2}^\frac12\|\partial_1\omega\|_{L^2}^\frac12\bigg(\sum_{m\leq q-2}2^{m-q}\|\Delta_m f\|_{L^2}\bigg)^{\frac12}\\&\quad\quad\times\bigg(\sum_{n\leq q-2}2^{n-q}\|\Delta_n \partial_1f\|_{L^2}\bigg)^{\frac12}\|\Delta_qf\|_{L^2}\\
&\leq C b_q2^{-2qs} (\|u\|_{L^2}+\|\omega\|_{L^2}^\frac12\|\partial_1\omega\|_{L^2}^\frac12)(\|f\|_{H^s}^2+\|f\|_{H^s}^\frac32\|\partial_1f\|_{H^s}^\frac12).\\
\end{split}
\end{equation}
Finally we estimate $Q_3$. By H\"older inequality and Bernstein inequality,
\begin{equation}
\label{eqlem34}
\begin{split}
|Q_{3}|&\leq \bigg|\sum_{k\geq q-1}\sum_{|k-l|\leq1}\int_{\R^2} \Delta_q(\Delta_ku^2\partial_2\Delta_q f)\Delta_qf~dx\bigg|\\
&\leq C \sum_{k\geq q-1}\sum_{|k-l|\leq1}\|\Delta_q(\Delta_ku^2\Delta_l\partial_2 f)\|_{L^1}\|\Delta_qf\|_{L^\infty}\\
&\leq C 2^q\sum_{k\geq q-1}\|\Delta_ku^2\|_{L^2}\|\Delta_k\partial_2f\|_{L^2}\|\Delta_qf\|_{L^2}\\
&\leq C 2^q\sum_{k\geq q-1}(\|u\|_{L^2}+\|\partial_1\omega\|_{L^2})2^{-2k}2^k\|\Delta_kf\|_{L^2}\|\Delta_qf\|_{L^2}\\
&\leq C b_q2^{-2qs}(\|u\|_{L^2}+\|\partial_1\omega\|_{L^2})\|f\|_{H^s}^2.\\
\end{split}
\end{equation}
Taking all these estimates into account, we can obtain
\begin{equation*}
\begin{split}
\label{eqlem1}
-\int_{\R^2}\Delta_q(u\cdot \nabla f)\Delta_q f~dx\leq&  C b_q2^{-2qs}(\|u\|_{L^2}+\|\omega\|_{L^2}+\|\partial_1\omega\|_{L^2})\\&\quad\quad\times(\|f\|_{H^s}^2+\|f\|_{H^s}^\frac12\|\partial_1f\|_{H^s}^\frac12+\|f\|_{H^s}^\frac32\|\partial_1f\|_{H^s}^\frac12),
\end{split}
\end{equation*}
which complete the proof of this lemma.

\end{proof}

\begin{lemma}[Losing regularity estimate for transport equation]
Let $\rho$ satisfies the transport equation
\begin{equation}
\left\{
\begin{array}{cc}
\begin{split}
&\partial_t \rho+u\cdot\nabla \rho=f,\\
&\rho(0,x)=\rho_0(x),
\end{split}
\end{array}
\right.
\end{equation}
where $\rho_0\in B^s_{2,r}$, $f\in L^1([0,T];B^s_{2,r})$ with $r\in[1,\infty]$. Here $v\in L^2$ is a divergence free
vector field and for some $V(t)\in L^1([0,T])$, $v$ satisfies
$$\sup_{N\geq0}\frac{\|\nabla S_N v(t)\|_{L^{\infty}}}{\sqrt{1+N}}\leq V(t). $$
Then for all $s>0$, $\varepsilon\in (0,s)$ and $t\in [0,T]$, we have the following estimate,
\begin{equation*}
\begin{split}
\|\rho(t)\|_{B^{s-\varepsilon}_{2,r}}&\leq C(T) \bigg(\|\rho_0\|_{B^{s}_{2,r}}+\int_0^T\|f(\tau)\|_{B^{s}_{2,r}}~d\tau\bigg)e^{\frac{C}{\varepsilon}\big(\int_0^TV(\tau)~d\tau\big)^2}.
\end{split}
\end{equation*}
\end{lemma}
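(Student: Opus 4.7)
The plan is to localize the transport equation via Littlewood--Paley decomposition and derive a dyadic block $L^2$ estimate, then to bound the commutator with the velocity through Bony's paraproduct decomposition, using the pointwise-in-time hypothesis $\|\nabla S_N u(t)\|_{L^\infty}\leq \sqrt{1+N}\,V(t)$. First I would apply $\Delta_q$ to the equation and take the $L^2$ inner product with $\Delta_q\rho$; the divergence-free condition on $u$ kills the transport term, so that
\begin{equation*}
\|\Delta_q\rho(t)\|_{L^2}\leq\|\Delta_q\rho_0\|_{L^2}+\int_0^t\|\Delta_q f(\tau)\|_{L^2}\,d\tau+\int_0^t\|[\Delta_q,u\cdot\nabla]\rho(\tau)\|_{L^2}\,d\tau.
\end{equation*}

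The heart of the proof is a commutator bound. Writing $u\cdot\nabla\rho = T_u\nabla\rho+T_{\nabla\rho}u+R(u,\nabla\rho)$ via Bony, I would treat the low-high paraproduct by the first-order Taylor trick already used in the proof of Lemma \ref{lemA.1}, extracting the factor $\|\nabla S_{k-1}u\|_{L^\infty}$ with $k\sim q$; the high-low and remainder pieces would be handled by H\"older plus Bernstein, each time isolating $\|\nabla S_{N(q)}u\|_{L^\infty}$ for some $N(q)\lesssim q$. Invoking the hypothesis converts every such $L^\infty$ norm into $\sqrt{1+q}\,V(t)$, yielding
\begin{equation*}
\|[\Delta_q,u\cdot\nabla]\rho(t)\|_{L^2}\leq C\,b_q(t)\,2^{-qs}\sqrt{1+q}\,V(t)\,\|\rho(t)\|_{B^s_{2,r}},
\end{equation*}
with $\{b_q(t)\}_q$ an $\ell^r$ sequence of norm $\leq 1$. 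Multiplying by $2^{qs}$, integrating in time and taking the $\ell^r$-norm in $q$ then gives a Gronwall-type inequality in which the high norm $\|\rho\|_{B^s_{2,r}}$ still appears on the right, weighted by the unbounded factor $\sqrt{1+q}$.

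This is precisely the obstacle that forces the $\varepsilon$-loss and is the main difficulty. To close, I would employ the losing-regularity iteration: subdivide the strip $[s-\varepsilon,s]$ into $K$ layers of width $\varepsilon/K$ and cascade the previous block estimate from layer $k-1$ to layer $k$. At each step, weighting by $2^{-q\varepsilon/K}$ tames the offending $\sqrt{1+q}$ because $\sum_q 2^{-q\varepsilon/K}\sqrt{1+q}\lesssim\sqrt{K/\varepsilon}$, and an ordinary Gronwall argument between consecutive layers transfers the bound downward at a multiplicative cost $\exp\!\bigl(C\sqrt{K/\varepsilon}\int_0^T V\bigr)$. Iterating $K$ times and then optimizing $K\sim\int_0^T V$ collapses the $K$-fold product into the announced factor $\exp\!\bigl(\tfrac{C}{\varepsilon}(\int_0^T V)^2\bigr)$. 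An essentially equivalent route is to introduce a continuous time-dependent loss $\lambda(t)=\mu\int_0^t V(\tau)\,d\tau$, work with $\|\rho(t)\|_{B^{s-\lambda(t)}_{2,r}}$, and absorb the $\sqrt{1+q}$ term into the drift $-q\,\lambda'(t)\log 2$ coming from differentiating the weight; optimizing $\mu$ so that $\lambda(T)\leq\varepsilon$ reproduces the same exponent. Neither approach is conceptually deep, but the bookkeeping must be done carefully to land exactly at the stated quadratic dependence on $\int_0^T V$.
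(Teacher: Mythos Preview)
Your proposal is correct and essentially matches the paper's proof. The paper follows precisely your second route: it introduces the time-dependent index $s_t=s-\eta\int_0^t V$ with $\eta=\varepsilon\bigl(\int_0^T V\bigr)^{-1}$, derives the block inequality with the factor $\sqrt{q+2}\,V(t)$ from the same Bony/commutator analysis you describe, and then (rather than literally differentiating the weight) integrates in time so that the mismatch $2^{(s_t-s_\tau)q}=2^{-\eta q\int_\tau^t V}$ appears; this factor is absorbed for $q\ge q_0\sim\eta^{-2}$, while the finitely many low blocks $q<q_0$ are handled by Gr\"onwall, producing exactly the exponent $\exp\bigl(\tfrac{C}{\varepsilon}(\int_0^T V)^2\bigr)$ you predicted.
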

\begin{proof}
The case $r=\infty$ has been shown in \cite{DP1}, here we just discuss $1\leq r<\infty$.
Applying $\Delta_q$ to \eqref{tran}, we obtain
\begin{equation}
\partial_t\Delta_q\rho+\Delta_q(v\cdot\nabla\rho)=\Delta f.
\end{equation}
Taking $L^2$ inner product with $\Delta_q\rho$,
\begin{equation}
\label{L2rho}
\begin{split}
\frac12\frac{d}{dt}\|\Delta_q\rho\|_{L^2}^2=-\int_{\R^2}\Delta_q(v\cdot\nabla\rho)\Delta_q\rho~dx+\int_{\R^2}\Delta_qf\Delta_q\rho~dx
\triangleq I+II.
\end{split}
\end{equation}
For $II$, by H\"older inequality,
\begin{equation}
\label{estimateII}
\begin{split}
II=\int_{\R^2}\Delta_qf\Delta_q\rho\leq \|\Delta_qf\|_{L^2}\|\Delta_q\rho\|_{L^2}.
\end{split}
\end{equation}
For $I$, along a similar argument as Lemma \ref{lemA.1}, we can divide it as
\begin{equation*}
\begin{split}
I=&-\int_{\R^2}\Delta_q(u\cdot \nabla \rho)\Delta_q \rho~dx\\
=&-\sum_{|k-q|\leq2}\int_{\R^2}\Delta_q(S_{k-1}u\cdot \Delta_k\nabla \rho)\Delta_q \rho~dx\\
&-\sum_{|k-q|\leq2}\int_{\R^2}\Delta_q(\Delta_ku\cdot \nabla S_{k-1} \rho)\Delta_q \rho~dx\\
&-\sum_{k\geq q-1}\sum_{|k-l|\leq1}\int_{\R^2}\Delta_q(\Delta_ku\cdot \nabla \Delta_l \rho)\Delta_q \rho~dx\\
\triangleq& L_1+L_2+L_3.
\end{split}
\end{equation*}
For $L_1$, we can rewrite it as
\begin{equation*}
\begin{split}
L_1&=-\sum_{|q-k|\leq2}\int_{\R^2}[\Delta_q, S_{k-1}u\cdot\nabla]\Delta_k \rho \Delta_q \rho~dx\\
&\quad-\sum_{|q-k|\leq2}\int_{\R^2}(S_{k-1}u-S_qu)\cdot\nabla\Delta_q \Delta_k\rho\Delta_q \rho~dx\\
&\quad-\int_{\R^2}{S}_{q}u\cdot\nabla\Delta_q \rho\Delta_q f~dx\\
&\triangleq L_{11}+L_{12}+L_{13},
\end{split}
\end{equation*}
According to divergence-free condition of $u$, it is not difficult to find that $L_{13}=0$. For $L_{11}$, by H\"older inequality,
\begin{equation*}
\begin{split}
|L_{11}|&\leq\sum_{|q-k|\leq2}\bigg|\int_{\R^2}[\Delta_q, S_{k-1}u\cdot\nabla]\Delta_k \rho\Delta_k \rho~dx\bigg|\\
&\leq C\sum_{|q-k|\leq2}\|[\Delta_q, S_{k-1}u\cdot\nabla]\Delta_k \rho\|_{L^2}\|\Delta_q\rho\|_{L^2}.
\end{split}
\end{equation*}
According to the definition of $\Delta_q$,
\begin{equation*}
\begin{split}
[\Delta_q, S_{k-1}u\cdot\nabla]\Delta_k \rho&=\int_{\R^2}\phi_q(x-y)(S_{k-1}u(y)\cdot\nabla\Delta_k \rho(y))~dy\\
&\quad\quad-S_{k-1}u(x)\cdot\int_{\R^d}\phi_q(x-y)\nabla\Delta_k \rho(y)~dy\\
&=\int_{\R^2}\phi_q(x-y)(S_{k-1}u(y)-S_{k-1}u(x))\cdot\nabla\Delta_k \rho(y)~dy\\
&=\int_{\R^2}\phi_q(x-y)\int_0^1(y-x)\cdot\nabla S_{k-1}u(sy+(1-s)x)~ds\cdot\nabla\Delta_k \rho(y)~dy\\
&=\int_{\R^2}\int_0^1\phi_q(z)z\cdot\nabla S_{k-1}u(x-sz)\cdot\nabla\Delta_k \rho(x-z)~ds dz.\\
\end{split}
\end{equation*}
Thus we have
\begin{equation*}
\begin{split}
\|[\Delta_q, S_{k-1}u\cdot\nabla]\Delta_k \rho\|_{L^2}
&=\bigg\|\int_{\R^2}\int_0^1\phi_q(z)z\cdot\nabla S_{k-1}u(x-sz)\cdot\nabla\Delta_k \rho(x-z)~ds dz\bigg\|_{L^2}\\
&\leq C\int_{\R^2}\big|\phi_q(z)\big||z|~dz\|\nabla{S}_{k-1}u(x-sz)\|_{L^\infty}\|\nabla\Delta_k \rho(x-z)\|_{L^2}\\
&\leq C2^{-q}\int_{\R^2}\big|\phi_q(z)\big||z|~dz\|\nabla S_{k-1}u\|_{L^\infty}\|\nabla\Delta_k \rho\|_{L^2}\\
&\leq C2^{-q}\|\nabla S_{k-1}u\|_{L^\infty}2^k\|\Delta_k \rho\|_{L^2}.
\end{split}
\end{equation*}
Then we obtain
\begin{equation*}
\begin{split}
|L_{11}|&\leq C\sum_{|q-k|\leq2}\|[\Delta_q, S_{k-1}u\cdot\nabla]\Delta_k \rho\|_{L^2}\|\Delta_q\rho\|_{L^2}\\
&\leq C\sum_{|q-k|\leq2}2^{k-q}\|\nabla S_{k-1}u\|_{L^\infty}\|\Delta_q \rho\|_{L^2}^2\\
&\leq C\sqrt{q}V(t)\|\Delta_q \rho\|_{L^2}^2\\
&\leq Cd_q2^{-\sigma q}\sqrt{q}V(t)\| \rho\|_{B^{\sigma}_{2,r}}\|\Delta_q \rho\|_{L^2},
\end{split}
\end{equation*}
where $d_q\in\ell^r$.\\
For $L_{12}$, by H\"older inequality,
\begin{equation*}
\begin{split}
|L_{12}|&=\sum_{|q-k|\leq2}\bigg|\int_{\R^2} ((S_{k-1}u-S_{q}u)\cdot\nabla\Delta_q\Delta_k \rho)\Delta_q \rho~dx\bigg|\\
&\leq C\sum_{|q-k|\leq2}2^{q-k}\|\nabla\Delta_k u\|_{L^\infty}\|\Delta_q\rho\|_{L^2}^2+\|u\|_{L^2}\|\Delta_q\rho\|_{L^2}^2\\
&\leq C(\sqrt{q+2}V(t)+\|u\|_{L^2})\|\Delta_q\rho\|_{L^2}^2\\
&\leq Cd_q2^{-\sigma q}(\sqrt{q+2}V(t)+\|u\|_{L^2})\| \rho\|_{B^{\sigma}_{2,r}}\|\Delta_q \rho\|_{L^2}.
\end{split}
\end{equation*}
For $L_2$, we can bound it by H\"older inequality that
\begin{equation*}
\begin{split}
|L_2|\leq C\sum_{|q-k|\leq2}\|\Delta_k u\|_{L^\infty}\|\nabla S_{k-1} \rho\|_{L^2}\|\Delta_q\rho\|_{L^2}.
\end{split}
\end{equation*}
According to Bernstein inequality,
\begin{equation*}
\begin{split}
|L_2|
&\leq C \sum_{|q-k|\leq2}\|\Delta_k u\|_{L^\infty}\sum_{m\leq q-2}2^{m}\|\Delta_m \rho\|_{L^2}\|\Delta_q\rho\|_{L^2}\\
&\leq C \sum_{|q-k|\leq2}2^q\|\Delta_k u\|_{L^\infty}\sum_{m\leq q-2}2^{m-q}\|\Delta_m \rho\|_{L^2}\|\Delta_q\rho\|_{L^2}\\
&\leq C \sum_{|q-k|\leq2}2^q\|\Delta_k u\|_{L^\infty}\sum_{m\leq q-2}2^{m-q}\|\Delta_m \rho\|_{L^2}\|\Delta_q\rho\|_{L^2}\\
&\leq C (\sqrt{q+2}V(t)+\|u\|_{L^2})\sum_{m\leq q-2}2^{m-q}\|\Delta_m \rho\|_{L^2}\|\Delta_q\rho\|_{L^2}\\
&\leq C d_q2^{-\sigma q}(\sqrt{q+2}V(t)+\|u\|_{L^2}) \|\rho\|_{B^{\sigma}_{2,r}}\|\Delta_q\rho\|_{L^2}.
\end{split}
\end{equation*}
Then we bound $L_3$. By H\"older inequality and Bernstein inequality,
\begin{equation*}
\begin{split}
|L_{3}|&\leq \bigg|\sum_{k\geq q-1}\sum_{|k-l|\leq1}\int_{\R^2} \Delta_q(\Delta_ku\cdot\nabla\Delta_q \rho)\|\Delta_q\rho\|_{L^2}~dx\bigg|\\
&\leq C \sum_{k\geq q-1}\sum_{|k-l|\leq1}\|\Delta_q\nabla\cdot(\Delta_ku\Delta_l \rho)\|_{L^2}\|\Delta_q\rho\|_{L^2}\\
&\leq C 2^q\sum_{k\geq q-1}\|\Delta_ku\|_{L^\infty}\|\Delta_k\rho\|_{L^2}\|\Delta_q\rho\|_{L^2}\\
&\leq C (\sqrt{q+2}V(t)+\|u\|_{L^2})\sum_{k\geq q-1}2^{q-k}\|\Delta_k\rho\|_{L^2}\|\Delta_q\rho\|_{L^2}\\
&\leq C d_q2^{-\sigma q}(\sqrt{q+2}V(t)+\|u\|_{L^2})\|\rho\|_{B^{\sigma}_{2,r}}\|\Delta_q\rho\|_{L^2}.\\
\end{split}
\end{equation*}
Thus we obtain $I$ can be bounded by
\begin{equation}
\label{estimateI}
\begin{split}
I\leq Cd_q2^{-\sigma q}(\sqrt{q+2}V(t)+1)\|\rho\|_{B^{\sigma}_{2,r}}\|\Delta_q\rho\|_{L^2}.
\end{split}
\end{equation}
Inserting \eqref{estimateII} and \eqref{estimateI} into \eqref{L2rho}, one can obtain
\begin{equation}
\label{delta_qrho}
\begin{split}
\frac{d}{dt}\|\Delta_q\rho(t)\|_{L^2}\leq \|\Delta_qf\|_{L^2}+
Cd_q2^{-\sigma q}(\sqrt{q+2}V(t)+1)\|\rho\|_{B^{\sigma}_{2,r}}.
\end{split}
\end{equation}
Denoting $s_t\triangleq s-\eta\int_0^tV(\tau)~d\tau$ for $t\in [0,T]$ with $\eta=\varepsilon\big(\int_0^TV(\tau)~d\tau\big)^{-1}$. Choosing $\sigma=s_t$ and integrating \eqref{delta_qrho} from $0$ to $t$ with respect to time variable and then multiplying by $2^{s_tq}$,
\begin{equation}
\label{rhoestimate}
\begin{split}
2^{s_tq}\|\Delta_q\rho(t)\|_{L^2}&\leq d_q \|\rho_0\|_{B^{s_t}_{2,r}}+d_q\int_0^t\|f(\tau)\|_{B^{s_t}_{2,1}}~d\tau\\&\quad+
Cd_q\int_0^t2^{\big(-\eta\int_{\tau}^{t}V(s)ds\big) q}(\sqrt{q+2}V(\tau)+1)\|\rho\|_{B^{s_\tau}_{2,r}}~d\tau.
\end{split}
\end{equation}
Choosing $q_0>0$ is the smallest integer such that
$$\frac{4C^2\|d_q\|_{\ell^r}^2}{(\log2)^2\eta^2}\leq q_0+2.$$
Then for $q\geq q_0$, we have
\begin{equation}
C\int_0^t2^{\big(-\eta\int_{\tau}^{t}V(s)ds\big) q}\sqrt{q+2}V(\tau)~d\tau\leq\frac{1}{2\|b_q\|_{\ell^r}}.
\end{equation}
Inserting these result into \eqref{rhoestimate} and taking $\ell^r$ norm of $q$, on can deduce
\begin{equation}
\label{rhoestimate1}
\begin{split}
\|\rho(t)\|_{B^{s_t}_{2,r}}&\leq C \|\rho_0\|_{B^{s}_{2,r}}+C\int_0^t\|f(\tau)\|_{B^{s}_{2,r}}~d\tau\\&\quad+
C\bigg(\sum_{q\geq q_0}\bigg(d_q\int_0^t2^{\big(-\eta\int_{\tau}^{t}V(s)ds\big) q}\sqrt{q+2}V(\tau)\|\rho\|_{B^{s_\tau}_{2,r}}~d\tau\bigg)^r\bigg)^{\frac1r}\\
&\quad+
C\bigg(\sum_{1\leq q<q_0 }\bigg(d_q\int_0^t2^{\big(-\eta\int_{\tau}^{t}V(s)ds\big) q}\sqrt{q+2}V(\tau)\|\rho\|_{B^{s_\tau}_{2,r}}~d\tau\bigg)^r\bigg)^{\frac1r}\\
&\leq C \|\rho_0\|_{B^{s}_{2,r}}+C\int_0^t\|f(\tau)\|_{B^{s}_{2,r}}~d\tau\\&\quad+
\frac12\sup_{t\in[0,T]}\|\rho\|_{B^{s_t}_{2,r}}+
C\sqrt{q_0+1}\int_0^tV(\tau)\|\rho\|_{B^{s_\tau}_{2,r}}~d\tau.\\
\end{split}
\end{equation}
Taking supremum of time $t$ from $0$ to $T$ and applying the Gr\"onwall's Lemma, we deduce
\begin{equation*}
\begin{split}
\sup_{t\in[0,T]}\|\rho(t)\|_{B^{s_t}_{2,r}}&\leq C(T) \bigg(\|\rho_0\|_{B^{s}_{2,r}}+\int_0^T\|f(\tau)\|_{B^{s}_{2,r}}~d\tau\bigg)e^{\sqrt{q_0+1}\int_0^TV(\tau)~d\tau}.
\end{split}
\end{equation*}
According to the definition of $q_0$, finally we obtain
\begin{equation*}
\begin{split}
\sup_{t\in[0,T]}\|\rho(t)\|_{B^{s_t}_{2,r}}&\leq C(T) \bigg(\|\rho_0\|_{B^{s}_{2,r}}+\int_0^T\|f(\tau)\|_{B^{s}_{2,r}}~d\tau\bigg)e^{\frac{C}{\eta}\int_0^TV(\tau)~d\tau},
\end{split}
\end{equation*}
which entails the desired inequality given that $s\geq s_t\geq s-\varepsilon$ for all $t\in[0,T]$.

\end{proof}

\end{appendices}

\vskip .4in
\section*{Acknowledgements}
M. Paicu is partially supported by the Agence Nationale de la Recherche, Project IFSMACS, grant ANR-15-CE40-0010. N. Zhu was partially supported by NSFC (No. 11771045, No. 11771043). Part of this work was done when N. Zhu was visiting Institut de Math\'ematiques de Bordeaux, and he would like to express the  his gratitude for providing him a very nice research environment.

\vskip .3in


\begin{thebibliography}{999}

\bibitem{ACSWXY}
D.~Adhikari, C.~Cao, H.~Shang, J.~Wu, X.~Xu, Z.~Ye, Global regularity results
for the 2d boussinesq equations with partial dissipation, {\it J. Differential Equations}, 260~(2) (2016) 1893--1917.

\bibitem{ACW1}
D.~Adhikari, C.~Cao, J.~Wu, The 2d boussinesq equations with vertical viscosity
and vertical diffusivity, {\it J. Differential Equations}, 249~(5) (2010),
1078--1088.

\bibitem{ACW2}
D.~Adhikari, C.~Cao, J.~Wu, Global regularity results for the 2d boussinesq
equations with vertical dissipation, {\it J. Differential Equations},
251~(6) (2011) 1637--1655.


\bibitem{AH}
H.~Abidi, T.~Hmidi, On the global well-posedness for boussinesq system, {\it J. Differential Equations} 233~(1) (2007), 199--220.
	
\bibitem{BCD}
H.~Bahouri, J.~Y. Chemin, R.~Danchin, {\it Fourier analysis and nonlinear partial differential equations}, Springer, (2011).
	
\bibitem{BC1}
A.~Bertozzi, P.~Constantin, Global regularity for vortex patches,
	{\it Comm. Math. Phys.} 152~(1) (1993), 19--28.

\bibitem{CD}
J.~Cannon, E.~DiBenedetto, The initial value problem for the boussinesq equations with data in $L^p$, in: {\it Approximation methods for Navier-Stokes
problems}, Springer, (1980), 129--144.
	
\bibitem{Chae1}
D.~Chae, Global regularity for the 2D boussinesq equations with partial
viscosity terms, {\it Adv. Math.} 203~(2) (2006), 497--513.
	
\bibitem{Chemin4}
J.-Y. Chemin, Persistance de structures g{\'e}om{\'e}triques dans les fluides
incompressibles bidimensionnels, {\it Ann. Sci. \'Ec. Norm. Sup\'er.} 26~(4) (1993), 517--542.
	
\bibitem{Chemin-perfect}
J.-Y. Chemin, {\it Perfect incompressible fluids}, Vol.~14, Oxford University Press,
(1998).

\bibitem{CDGG1}
J.~Y. Chemin, B.~Desjardins, I.~Gallagher, E.~Grenier,
{\it Mathematical geophysics. an introduction to rotating fluids and the navier-stokes equations}, Oxford Lecture, (2006).

\bibitem{CDGG2}
J.~Y. Chemin, B.~Desjardins, I.~Gallagher, E.~Grenier, Fluids with anisotropic
viscosity, {\it ESAIM Math. Model. Numer. Anal. } 34~(2) (2000),
315--335.

\bibitem{CW}
C.~Cao, J.~Wu, Global regularity for the two-dimensional anisotropic boussinesq
equations with vertical dissipation, {\it Arch. Ration. Mech. Anal.}, 208~(3) (2013), 985--1004.

\bibitem{Danchin1}
R.~Danchin, {\'E}volution temporelle d'une poche de tourbillon singuli{\`e}re,
{\it Comm. Partial Differential Equations} 22~(5-6) (1997), 685--721.
	
\bibitem{Danchin2}
R.~Danchin, Poches de tourbillon visqueuses,
{\it J. Math. Pures Appl.} 76~(7) (1997), 609--647.
	
\bibitem{DFP}
R.~Danchin, F.~Fanelli, M.~Paicu, A well-posedness result for viscous	compressible fluids with only bounded density, arXiv preprint
arXiv:1804.09503.
	
\bibitem{DM2}
R.~Danchin, P.~B. Mucha, The incompressible navier-stokes equations in vacuum,
{\it Comm. Pure Appl. Math.} (2018).
	
\bibitem{DM1}
R.~Danchin, P.~B. Mucha, A lagrangian approach for the incompressible
navier-stokes equations with variable density,{\it Comm. Pure Appl. Math.} 65~(10) (2012), 1458--1480.
	
\bibitem{DP3}
R.~Danchin, M.~Paicu, Les th{\'e}or{\`e}mes de leray et de fujita-kato pour le
syst{\`e}me de boussinesq partiellement visqueux, {\it Bull. Soc. Math. France} 136~(2) (2008), 261--309.
	
\bibitem{DP1} 
R. Danchin and M. Paicu, Global existence results for the anisotropic Boussinesq system in dimension two, {\it Math. Models Methods Appl. Sci.}  21 (2011), 421--457.	
	
\bibitem{DP2} 
R. Danchin and M. Paicu, Global well-posedness issues for the inviscid boussinesq system with Yudovich's
type data, {\it Comm. Math. Phys.} 290 (1) (2009), 1--14. 

\bibitem{DZ}
R.~Danchin, X.~Zhang, Global persistence of geometrical structures for the
boussinesq equation with no diffusion, {\it Comm. Partial Differential Equations}
42~(1) (2017), 68--99.
	
\bibitem{DZ2}
R.~Danchin, X.~Zhang, On the persistence of h\"older regular patches of density
for the inhomogeneous navier-stokes equations, {\it J. Ec. polytech. Math.}
4 (2017), 781--811.
	
\bibitem{Fanelli1}
F.~Fanelli, Conservation of geometric structures for non-homogeneous inviscid
incompressible fluids, {\it Comm. Partial Differential Equations}
37~(9) (2012), 1553--1595.
	
\bibitem{GS}
P.~Gamblin, X.~Saint-Raymond, On three-dimensional vortex patches, {\it Bull. Soc. Math. France} 123~(3) (1995), 375--424.
	
\bibitem{GG-J}
F.~Gancedo, E.~Garc{\'\i}a-Ju{\'a}rez, Global regularity for 2D boussinesq
temperature patches with no diffusion, {\it Ann. PDE} 3:14 (2017).
	
\bibitem{Guo}
B.~Guo, Spectral method for solving two-dimensional newton-boussinesq
equations, {\it Acta Math. Appl. Sin. Engl. Ser.} 5~(3) (1989),
208--218.
	
\bibitem{HH}
Z.~Hassainia, T.~Hmidi, On the inviscid boussinesq system with rough initial
data, {\it J. Math. Anal. Appl.} 430~(2) (2015),
777--809.
	
\bibitem{Hmidi1}
T.~Hmidi, R{\'e}gularit{\'e} h{\"o}ld{\'e}rienne des poches de tourbillon
visqueuses, {\it J. Math. Pures Appl.} 84~(11)
(2005), 1455--1495.
	
\bibitem{HK1}
T.~Hmidi, S.~Keraani, On the global well-posedness of the two-dimensional
boussinesq system with a zero diffusivity, {\it Adv. Differential Equations}
12~(4) (2007), 461.
	
\bibitem{HK2}
T.~Hmidi, S.~Keraani, On the global well-posedness of the boussinesq system
with zero viscosity, {\it to appear in Indiana Univ. Math. Journal.}
	
\bibitem{HL}
T.~Y. Hou, C.~Li, Global well-posedness of the viscous boussinesq equations,
{\it Discrete Contin. Dyn. Syst.} 12~(1) (2004), 1--12.

\bibitem{Iftimie}
D.~Iftimie, A uniqueness result for the navier--stokes equations with vanishing
vertical viscosity, {\it SIAM J. Math. Anal.} 33~(6) (2002),
1483--1493.

\bibitem{JL}
Q.~Jiu, J.~Liu, Global-wellposedness of 2D Boussinesq equations with mixed partial temperature-dependent viscosity and thermal diffusivity, {\it  Nonlinear Anal. TMA} 132 (2016),
227--239.

\bibitem{Kato}
 T. Kato. {\it Liapunov functions and monotonicity in the Euler and Navier-Stokes equations}, Lecture Notes in Mathematics 1450, Berlin: Springer-Verlag, 1990.	

\bibitem{KP}
T. Kato, G. Ponce. Commutator estimates and the Euler and Navier-Stokes
equations, {\it Comm. Pure Appl. Math.},  41  (1988): 891--907.
		
\bibitem{KPV}
C. Kenig, G. Ponce, L. Vega. Well-posedness of the initial value problem for the
Korteweg-de-Vries equation, {\it J. Amer. Math. Soc.},  4  (1991): 323--347.

\bibitem{LLT}
A. Larios, E. Lunasin, and E. S. Titi, Global well-posedness for the 2D Boussinesq system with anisotropic viscosity and without heat diffusion, {\it J. Differential Equations}
255~(9) (2013), 2636--2654.
	
\bibitem{LPZ1}
M.-J. Lai, R.~Pan, K.~Zhao, Initial boundary value problem for two-dimensional
viscous boussinesq equations, {\it Arch. Ration. Mech. Anal.}
199~(3) (2011), 739--760.
	
\bibitem{LPZ2}
H.~Li, R.~Pan, W.~Zhang, Initial boundary value problem for 2d boussinesq
equations with temperature-dependent diffusion, {\it J. Hyperbolic Differ. Equ.} 12~(03) (2015), 469--488.

\bibitem{LT1}
J. Li, E.S. Titi, Global well-posedness of the 2D Boussinesq equations with vertical dissipation, {\it Arch. Ration. Mech. Anal.} 220 (2016), 983--1001.
	
\bibitem{LZ1}
X.~Liao, P.~Zhang, On the global regularity of the two-dimensional density
patch for inhomogeneous incompressible viscous flow, {\it Arch. Ration. Mech. Anal.} 220~(3) (2016), 937--981.
	
\bibitem{LZ2}
X.~Liao, P.~Zhang, Global regularity of 2d density patches for viscous
inhomogeneous incompressible flow with general density: Low regularity case,
{\it Comm. Pure Appl. Math.} 72~(4) (2019),  835--884.

\bibitem{Paicu1}
M.~Paicu, Anisotropic Navier-Stokes equation in critical spaces,
Rev. Mat. Iber. 21~(1) (2005), 179--235.

\bibitem{PZ1}
M.~Paicu, P.~Zhang, Striated regularity of 2-d inhomogeneous incompressible
navier-stokes system with variable viscosity, arXiv preprint
arXiv:1711.04490.
	
\bibitem{Pedlosky}
J.~Pedlosky, {\it Geophysical fluid dynamics}, Springer Science \& Business Media,
(2013).
	
\bibitem{XZ}
X.~Xu, N.~Zhu, Global well-posedness for the 2D Boussinesq equations with partial temperature-dependent dissipative terms,
{\it  J. Math. Anal. Appl.} 466 (2018),  351--372.


	

	
	
	
\end{thebibliography}
\end{document}